\newtheorem{theorem}{Theorem}
\newtheorem{lemma}{Lemma}
\newtheorem{remark}{Remark}
\newtheorem{assumption}{Assumption}
\newtheorem{corollary}{Corollary}
\journal{}
\numberwithin{equation}{section}  
\def\vavg#1{\{\!\!\{#1\}\!\!\}}   
\def\norm#1{\hspace{0.1em}|\!\hspace{0.2em}\!|\!\hspace{0.2em}\!|\hspace{0.05em}#1\hspace{0.05em}|\!\hspace{0.2em}\!|\!\hspace{0.2em}\!|\hspace{0.1em}_h}  
\def\nnorm#1{\hspace{0.1em}|\!\hspace{0.2em}\!|\!\hspace{0.2em}\!|\hspace{0.05em}#1\hspace{0.05em}|\!\hspace{0.2em}\!|\!\hspace{0.2em}\!|\hspace{0.1em}}
\def\norma#1{\hspace{0.1em}|\!\hspace{0.2em}\!|\!\hspace{0.2em}\!|\hspace{0.05em}#1\hspace{0.05em}|\!\hspace{0.2em}\!|\!\hspace{0.2em}\!|\hspace{0.1em}_a}
\begin{document}

\begin{frontmatter}

\title{A Streamline Upwind/Petrov-Galerkin Method for the Magnetic Advection-Diffusion Problem
}

\author[1]{Haochen Li} 
\ead{2401110047@stu.pku.edu.cn}
\author[1]{Yangfan Luo}
\ead{luoyf@stu.pku.edu.cn}
\author[2]{Jindong Wang}
\ead{jindong.wang@kaust.edu.sa}
\author[1]{Shuonan Wu\corref{cor1}}
\cortext[cor1]{Corresponding author. Email: snwu@math.pku.edu.cn}

\affiliation[1]{organization={School of Mathematical Sciences, Peking University},
            city={Beijing},
            postcode={100871}, 
            country={China}}

\affiliation[2]{organization={Computer, Electrical and Mathematical Science and Engineering Division,King Abdullah University of Science and Technology},
            city={Thuwal},
            postcode={23955},
            country={Saudi Arabia}}

\begin{abstract}
{
This paper presents the development and analysis of a streamline upwind/Petrov-Galerkin (SUPG) method for the magnetic advection-diffusion problem. A key feature of the method is an SUPG-type stabilization term based on the residuals and weighted advection terms of the test function. By introducing a lifting operator to characterize the jumps of finite element functions across element interfaces, we define a discrete magnetic advection operator, which subsequently enables the formulation of the desired SUPG method. Under mild assumptions, we establish the stability of the scheme and derive optimal error estimates. 
Numerical examples in both two and three dimensions are provided to demonstrate the theoretical convergence and stabilization properties of the proposed method. }
\end{abstract}

\begin{keyword}
magnetic advection-diffusion \sep $\bm{H}(\mathrm{curl})$ conforming methods \sep residual-based stabilization \sep streamline upwind/Petrov-Galerkin (SUPG) method 

\MSC[2020] 65N30 \sep 65N12 \sep 65N15

\end{keyword}

\end{frontmatter}

\section{Introduction}\label{sect:intro}

Magnetohydrodynamics (MHD) \citep{gerbeau2006mathematical} provides a fundamental framework for modeling the behavior of electrically conducting fluids under the influence of magnetic fields. It plays a critical role in a wide range of scientific and engineering applications, including nuclear fusion devices, astrophysical simulations, and laboratory plasma experiments.

The full MHD system couples the Navier–Stokes equations of fluid dynamics with Maxwell’s equations of electromagnetism, resulting in a nonlinear and computationally challenging model, particularly when both the hydrodynamic and magnetic Reynolds numbers are high. Due to this complexity, it is often useful to study simplified subsystems that capture essential physical mechanisms while remaining tractable to analysis and simulation. While significant progress has been made in understanding the behavior under high hydrodynamic Reynolds numbers \citep{john2017divergence}, the regime of high magnetic Reynolds numbers remains comparatively less explored. Therefore, one simplification strategy is to assume a known fluid velocity $\bm{v}$, which decouples the Navier-Stokes equations and yields the following reduced system \citep{gerbeau2006mathematical}:
$$
\left\{\begin{aligned}
    \bm{j} - R_m^{-1}\nabla\times(\mu_r^{-1}\bm{B}) &= 0,   \\
    \bm{B}_t + \nabla\times\bm{E} &= 0, \\
    \bm{j} - \sigma_r(\bm{E}+\bm{v}\times\bm{B}) &= 0,\\
    \nabla\cdot\bm{B} &= 0.
\end{aligned}\right.
$$
Physically, the dimensionless quantities $\bm{E}$ and $\bm{B}$ correspond to the electric and magnetic fields within a conductor moving with the given velocity $\bm{v}$. Due to the divergence-free nature of the magnetic field $\bm{B}$, it is possible to introduce a magnetic vector potential $\bm{A}$, such that $\bm{B}=\nabla\times\bm{A}$. Using the identity $\nabla\times(\nabla(\cdot))\equiv \bm{0}$, we have
$$
\nabla\times\bm{E} = -\bm{B}_t = \nabla\times(-\bm{A}_t-\nabla(\bm{v}\cdot\bm{A})),
$$
thus $\bm{E} = -\bm{A}_t-\nabla(\bm{v}\cdot\bm{A})-\nabla\phi$, where $\nabla\phi$ represents the undetermined gauge freedom. The elimination of $\bm{j}$ and $\bm{B}$ leads to
$$
R_m^{-1}\sigma_r^{-1}\nabla\times(\mu_r^{-1}\nabla\times\bm{A}) = \bm{E} + \bm{v}\times(\nabla\times\bm{A}).
$$
After discretizing the temporal derivative $\bm{A}_t$ and setting $\phi = 0$, we obtain the following magnetic advection-diffusion equation for the magnetic vector potential $\bm{A}$:
\begin{equation}\label{magnetic_advection_diffusion_equation_for_magnetic_vector_potential}
    R_m^{-1}\sigma_r^{-1}\nabla\times(\mu_r^{-1}\nabla\times\bm{A}) - \bm{v}\times(\nabla\times\bm{A}) + \nabla(\bm{v}\cdot\bm{A}) + \gamma\bm{A} = \bm{f}.
\end{equation}

Despite being a simplification, the magnetic advection-diffusion equation \eqref{magnetic_advection_diffusion_equation_for_magnetic_vector_potential} still presents significant challenges for accurate discretization due to the advection-dominated nature of the problem in high magnetic Reynolds number regimes, which are characteristic of many astrophysical and fusion applications. Consequently, several studies have focused on developing robust numerical schemes for this fundamental equation, as exemplified by \citep{heumann2013stabilized, wang2023exponentially, wang2024discontinuous}.


To align with common notational conventions, we now abstract the physical equation \eqref{magnetic_advection_diffusion_equation_for_magnetic_vector_potential} by introducing $\bm{u} := \bm{A}$ and $\bm{\beta} := \bm{v}$. We therefore focus on the following boundary value problem:
\begin{equation}\label{curl_advection_diffusion_equation}
    \left\{
    \begin{aligned}
        \nabla\times(\varepsilon\nabla\times\bm{u}) - \bm{\beta}\times(\nabla\times\bm{u}) + \nabla(\bm{\beta}\cdot\bm{u}) + \gamma\bm{u} &= \bm{f} ~~~~\text{in}~~\Omega,\\
        \bm{n}\times\bm{u} + \chi_{\Gamma^-}(\bm{u}\cdot\bm{n})\bm{n} &= \bm{g}~~~~\text{on}~~\Gamma,
    \end{aligned}
    \right.
\end{equation}
where $\bm{f}\in(L^2(\Omega))^3$, $\bm{g}\in(H^{1/2}(\Gamma))^3$, $\Omega\subset\mathbb{R}^3$ is a bounded, Lipschitz, polyhedral domain with boundary $\Gamma=\partial\Omega$. $\bm{\beta}\in (W^{1,\infty}(\Omega))^3$ is the velocity field and $\gamma\in L^{\infty}(\Omega)$ is the reaction coefficient. The diffusion coefficient $\varepsilon$ is a positive constant. The term $-\bm{\beta}\times(\nabla\times\bm{u}) + \nabla(\bm{\beta}\cdot\bm{u})$ in \eqref{curl_advection_diffusion_equation} is the magnetic advection term, also known as the Lie derivative
\begin{equation}
    L_{\bm{\beta}}\bm{u} := - \bm{\beta}\times(\nabla\times\bm{u}) + \nabla(\bm{\beta}\cdot\bm{u}),
\end{equation}
which is a vector counterpart of the scalar advection $\bm{\beta}\cdot\nabla u$. The inflow and outflow parts of $\Gamma$ are defined in the usual fashion
$$
\begin{aligned}
    \Gamma^+ &:= \{{x}\in\Gamma:\bm{\beta}({x})\cdot\bm{n}({x})\ge0\},\\
    \Gamma^- &:= \{{x}\in\Gamma:\bm{\beta}({x})\cdot\bm{n}({x})<0\},
\end{aligned}
$$
where $\bm{n}(x)$ is the unit outward normal vector to $\Gamma$ at $x\in\Gamma$.

One major numerical challenge associated with \eqref{curl_advection_diffusion_equation} is maintaining stability when $\varepsilon$ is small, i.e., in advection-dominated regimes where $\varepsilon \ll \vert\bm{\beta}\vert$. Under such conditions, standard numerical methods often exhibit spurious oscillations or a loss of accuracy. This issue is especially pronounced and has been extensively studied for the closely related scalar advection-diffusion problem
\begin{equation}\label{eq:scalar_advection_diffusion}
-\nabla \cdot (\varepsilon\nabla u) + \bm{\beta} \cdot \nabla u + \gamma u = f.
\end{equation}
 
This challenge has been a central focus in the scalar advection-diffusion setting, where it has given rise to the development of numerous numerical methods. A prominent class of approaches for the scalar advection-diffusion equation \eqref{eq:scalar_advection_diffusion} employs upwinding or streamline-based stabilization to enhance the numerical stability, including: 
the streamline upwind/Petrov-Galerkin (SUPG) method \citep{mizukami1985petrov, franca1992stabilized, chen2005optimal, burman2010consistent}, 
the Galerkin/least-squares finite element method \citep{hughes1989new}, 
bubble function stabilization \citep{brezzi1994choosing, brezzi1998applications, brezzi1998further, brezzi1999priori, franca2002stability}, 
the stabilized discontinuous Galerkin (DG) method \citep{houston2002discontinuous, brezzi2004discontinuous, ayuso2009discontinuous}, 
the discontinuous Petrov-Galerkin (DPG) method \citep{demkowicz2010class,demkowicz2011class}, the local projection stabilization (LPS) method \citep{braack2006local, matthies2007unified, knobloch2010generalization},
the edge stabilization \citep{burman2004edge},
and the continuous interior penalty (CIP) method \citep{burman2005unified, burman2007continuous, burman2009weighted}.
Another class of methods for \eqref{eq:scalar_advection_diffusion} is based on exponential fitting, where exponential functions are incorporated either in the construction of basis functions \citep{dorfler1999uniform, dorfler1999uniform_2, o1991analysis, o1991globally, wang1997novel} or in the assembly of the stiffness matrix \citep{xu1999monotone}.

In recent years, the magnetic advection-diffusion problem \eqref{curl_advection_diffusion_equation} has attracted increasing attention within the numerical analysis community. Exponentially-fitted methods have been proposed in \citep{wu2020simplex, wang2023exponentially}. For upwinding-based stabilization, Heumann and Hiptmair developed specialized upwind techniques for vector-valued problems \citep{heumann2010eulerian, heumann2013stabilized}. A local projection stabilization method was introduced in \citep{Luo2025LocalPS}. In addition, both primal and hybridizable discontinuous Galerkin methods for \eqref{curl_advection_diffusion_equation} have been presented in \citep{wang2024discontinuous, wang2024hybridizable}. Although many existing stabilization techniques have been extended to the vector-valued case, to the best of our knowledge, the streamline upwind/Petrov-Galerkin (SUPG) method, a conforming approach, has not previously been formulated for this magnetic advection-diffusion problem.

The streamline upwind/Petrov-Galerkin (SUPG) method, originally introduced as the streamline-diffusion finite element method (SDFEM) by Hughes and Brooks \citep{hughes1979multidimensional}, achieves stabilization by adding weighted residuals to the bilinear form. A more comprehensive and widely cited theoretical analysis, including the derivation of the design criteria for stabilization parameters, was presented in \citep{brooks1982streamline} and firmly established SUPG as a primary tool for computational fluid dynamics due to its theoretical clarity, computational efficiency, and ease of implementation. Owing to these advantages, the SUPG method has been successfully applied to a wide range of problems in computational fluid dynamics, including incompressible Navier-Stokes equations \citep{TEZDUYAR1992221, brooks1982streamline}, compressible flows \citep{LEBEAU1993397}, and transport equations \citep{burman2010consistent}, among others.

In this paper, we extend the streamline upwind/Petrov-Galerkin (SUPG) method to the magnetic advection-diffusion problem, expanding on the pioneering work of Hughes et al. \citep{hughes1979multidimensional}. Our approach first identifies the discrete magnetic advection operator through a lifting operator that captures finite element jumps across interfaces. This operator is then used to add a corresponding stabilization term defined as the inner product of the residual and a weighted version of it, thus establishing the SUPG method. Under mild assumptions, we prove the stability of the method and derive optimal a priori error estimates. Numerical experiments confirm the theoretical analysis and demonstrate the effectiveness of the method in suppressing spurious oscillations compared to existing approaches.


The remainder of this paper is organized as follows. In Section \ref{sect:pset}, we introduce the necessary preliminaries. Section \ref{sect:SUPGscheme} details the SUPG discretization scheme and its derivation. The stability analysis of the proposed method is established in Section \ref{sect:stability_analysis}. Section \ref{sect:priori} presents the derivation of optimal a priori error estimates. 
Numerical experiments validating the theoretical findings are conducted in Section \ref{sect:numerical_experiment}. Finally, concluding remarks are given in Section \ref{sec:conclusion}.

\section{Preliminaries}\label{sect:pset}
Given a bounded domain $D \subset \mathbb{R}^3$, a positive integer $s$, and $p \in [1, \infty]$, we denote by $W^{s,p}(D)$ the Sobolev space equipped with the standard norm $\Vert\cdot\Vert_{s,p,D}$ and semi-norm $\vert\cdot\vert_{s,p,D}$. For simplicity, we omit the index $p$ when $p = 2$ and the domain $D$ when $D = \Omega$ in the subscript. Specifically,
\[
\|\cdot\|_{s,D} := \|\cdot\|_{s,2,D}, \quad
\|\cdot\|_{s,p} := \|\cdot\|_{s,p,\Omega}, \quad
\|\cdot\|_s := \|\cdot\|_{s,2,\Omega}.
\]
The same convention is adopted for semi-norms. The $L^2$-inner products over $D$ and $\partial D$ are denoted by $(\cdot,\cdot)_D$ and $\langle\cdot,\cdot\rangle_{\partial D}$, respectively. We define the space
$$
\bm{H}(\mathrm{curl};\Omega) :=\{ \bm{v}\in \left(L^2(\Omega)\right)^3 : { \nabla \times} \bm{v}\in \left(L^2(\Omega)\right)^3 \},
$$
along with the $\bm{H}(\mathrm{curl};\Omega)$-trace operator
$$
\gamma_t(\bm{v}) := \bm{v}|_{\partial\Omega}\times\bm{n}.
$$
{For later use, we also introduce the subspace with vanishing tangential trace
$$
\bm{H}_0(\mathrm{curl};\Omega) :=\{ \bm{v}\in\bm{H}(\mathrm{curl};\Omega) : \gamma_t(\bm{v})|_{\partial\Omega}\equiv 0\}.
$$
}
\par
Let $\mathcal{T}_h$ be a shape-regular family of triangulations of $\Omega$ such that each open boundary facet lies entirely in either $\Gamma^{+}$ or $\Gamma^{-}$. Denote by $\mathcal{F}_h$ the corresponding set of all facets. Let $\mathcal{F}_h^{\circ}=\mathcal{F}_h\backslash\partial\Omega$ be the set of interior facets and $\mathcal{F}_h^{\partial}=\mathcal{F}_h\backslash\mathcal{F}_h^{\circ}$ be the set of boundary facets. By construction, every boundary facet belongs either to the inflow or the outflow boundary, thus $\mathcal{F}_h^{\partial} = \mathcal{F}_h^+\cup \mathcal{F}_h^-$, where
$$
\begin{aligned}
&\mathcal{F}_{h}^{\pm} := \{ F\in\mathcal{F}_h^{\partial}: F\subset\Gamma^{\pm} \}.
\end{aligned}
$$
We denote by $h_T$ and $h_F$ the diameters of $T\in\mathcal{T}_h$ and $F\in\mathcal{F}_h$. Shape regularity ensures that $h_T\simeq h_F$. The \(L^2\)-inner product on a facet \(F\) is denoted by \(\langle \cdot, \cdot \rangle_F\).
\par

The formal dual operator of $L_{\bm{\beta}}$ is given by
\begin{equation}
\mathcal{L}_{\bm{\beta}}\bm{v}:=\nabla\times(\bm{\beta}\times\bm{v})-\bm{\beta}\nabla\cdot\bm{v}.
\end{equation}
For any domain $D$, a direct calculation yields
\begin{subequations}\label{identities_Lie_advection}
    \begin{align}
        L_{\bm{\beta}}\bm{u} + \mathcal{L}_{\bm{\beta}}\bm{u} &= -(\nabla\cdot\bm{\beta})\bm{u} + [\nabla\bm{\beta} + (\nabla\bm{\beta})^T]\bm{u}, \label{identities_Lie_advection_1}\\
        (L_{\bm{\beta}}\bm{u},\bm{v})_D&=(\bm{u},\mathcal{L}_{\bm{\beta}}\bm{v})_D + \langle\bm{\beta}\cdot\bm{n},\bm{u}\cdot\bm{v}\rangle_{\partial D}. \label{identities_Lie_advection_2}
    \end{align}
\end{subequations}
{
The weak formulation of \eqref{curl_advection_diffusion_equation} is derived as follows. First, we introduce a function $\tilde{\bm{u}}_b$ whose $\bm{H}(\mathrm{curl})$-trace matches the Dirichlet boundary data of the exact solution $\bm{u}$. Observe that the trace \(\gamma_t(\bm{u})|_{\Gamma}\) is prescribed by the data \(\bm{g}\). The trial and test spaces are then defined as
\begin{equation}
\begin{aligned}
    \bm{\mathcal S}&= \{ \bm v : L_{\bm\beta}\bm v \in \left(L^2(\Omega)\right)^3 \}\cap \bm H_0(\mathrm{curl};\Omega) \\
    &= \{ \bm v : \mathcal L_{\bm\beta}\bm v \in \left(L^2(\Omega)\right)^3 \}\cap \bm H_0(\mathrm{curl};\Omega). \\
\end{aligned}
\end{equation}
The equality of these two expressions for $\bm{\mathcal S}$ follows from identity \eqref{identities_Lie_advection_1} in light of the regularity assumption $\bm{\beta} \in \bigl(W^{1,\infty}(\Omega)\bigr)^3$. Multiplying \eqref{curl_advection_diffusion_equation} by a test function $\bm{v} \in \bm{\mathcal{S}}$ and integrating over $\Omega$ gives
$$
\left(\nabla\times(\varepsilon\nabla\times\bm{u}),\bm v\right) + (L_{\bm \beta}\bm u, \bm v) + (\gamma\bm{u},\bm v) = (\bm{f}, \bm v).
$$
Applying the integration by parts formula yields
$$
(\nabla\times(\varepsilon\nabla\times\bm{u}), \bm v) = (\varepsilon\nabla\times\bm{u},\nabla\times\bm v)+\langle\varepsilon\nabla\times\bm{u}, \bm v\times\bm n\rangle = (\varepsilon\nabla\times\bm{u},\nabla\times\bm v),
$$
where the $L^2$-inner product over $\partial\Omega$ is denoted by $\langle\cdot,\cdot\rangle$ for convenience. Thus, we arrive at the following form
$$
\left(\varepsilon\nabla\times\bm{u},\nabla\times\bm v\right) + (L_{\bm \beta}\bm u, \bm v) + (\gamma\bm{u},\bm v) = (\bm f, \bm v).
$$
To ensure the coercivity of the bilinear form, further modifications are required. A standard technique is to apply the integration by parts formula \eqref{identities_Lie_advection_2} to the term $\frac{1}{2}(L_{\bm{\beta}}\bm{u},\bm{u})$:
\begin{equation}\label{c_weak_form_coer_1}
\begin{aligned}
    (L_{\bm \beta}\bm u,\bm u) + (\gamma\bm u,\bm u) &= \frac{1}{2}(L_{\bm \beta}\bm u,\bm u) +  \frac{1}{2}(L_{\bm \beta}\bm u,\bm u) + (\gamma\bm u,\bm u) \\
    &= \frac{1}{2}(L_{\bm \beta}\bm u,\bm u) +  \frac{1}{2}(\bm u,\mathcal L_{\bm \beta}\bm u) + \frac{1}{2}\langle\bm\beta\cdot\bm n,\bm u\cdot\bm u\rangle + (\gamma\bm u,\bm u) \\
    &=\left( (\frac{1}{2}L_{\bm \beta}+\frac{1}{2}\mathcal L_{\bm \beta}+\gamma I)\bm u,\bm u \right) + \frac{1}{2}\langle\bm\beta\cdot\bm n,\bm u\cdot\bm u\rangle_{\Gamma^+} + \frac{1}{2}\langle\bm\beta\cdot\bm n,\bm u\cdot\bm u\rangle_{\Gamma^-}.
\end{aligned}
\end{equation}
The presence of the term $\left( \left(\frac{1}{2}L_{\bm{\beta}} + \frac{1}{2}\mathcal{L}_{\bm{\beta}} + \gamma I\right)\bm{u}, \bm{u} \right)$ leads to the following assumption:
\begin{assumption}[Friedrichs system\cite{friedrichs1958symmetric}]\label{well_posedness_assumption}   
There exists a positive constant $\rho_0$ such that
\begin{equation}\label{well_posedness_assumption_formulation}
    \rho(x):=\lambda_{\min}\left[(\gamma-\frac{\nabla\cdot\bm{\beta}}{2})I+\frac{\nabla\bm{\beta}+(\nabla\bm{\beta})^T}{2}\right]\ge\rho_0>0,~~\forall x\in\Omega,
\end{equation}
where $\lambda_{\min}$ is the smallest eigenvalue of the corresponding matrix.
\end{assumption}
Furthermore, to counteract the negative term $\frac{1}{2}\langle\bm{\beta}\cdot\bm{n},\bm{u}\cdot\bm{u}\rangle_{\Gamma^-}$, it is necessary to incorporate the term $-\langle \bm\beta\cdot\bm n,\bm u\cdot\bm v \rangle_{\Gamma^-}$ into the bilinear form. Under the condition that $\bm{v}|_{\partial \Omega} \times \bm{n} \equiv \bm{0}$, we derive
\begin{equation}
\begin{aligned}
    &\langle \bm\beta\cdot\bm n,\bm u\cdot\bm v \rangle_{\Gamma^-} = \langle \bm\beta\cdot\bm n,(\bm u\cdot \bm n)\cdot(\bm v\cdot\bm n) \rangle_{\Gamma^-} \\
    =& \langle \bm\beta\cdot\bm n,(\bm g\cdot \bm n)\cdot(\bm v\cdot\bm n) \rangle_{\Gamma^-} =  \langle \bm\beta\cdot\bm n,\bm g\cdot\bm v \rangle_{\Gamma^-}.
\end{aligned}
\end{equation}
This leads to the following weak formulation of \eqref{curl_advection_diffusion_equation}: Find $\bm u\in \tilde{\bm u}_b + \bm{\mathcal S}$, such that
\begin{equation}\label{curl_advection_diffusion_equation_weakform}
    \begin{aligned}
        a(\bm u,\bm v) = f_0(\bm v),\quad\forall \bm v\in \bm{\mathcal S},
    \end{aligned}
\end{equation}
where
\begin{subequations}
    \begin{align}
        a(\bm u,\bm v) &= \left(\varepsilon\nabla\times\bm{u},\nabla\times\bm v\right) + (L_{\bm \beta}\bm u, \bm v) + (\gamma\bm{u},\bm v)-\langle \bm\beta\cdot\bm n,\bm u\cdot\bm v \rangle_{_{\Gamma^-}},\label{curl_advection_diffusion_equation_weakform_bilinear}\\
        f_0(\bm v) &= (\bm f,\bm v) - \langle\bm\beta\cdot\bm n,\bm g\cdot\bm v\rangle_{\Gamma^-}.
    \end{align}
\end{subequations}
Combining \eqref{c_weak_form_coer_1} and \eqref{well_posedness_assumption_formulation}, we can deduce that
\begin{equation}\label{c_weak_form_coer_formulation}
a(\bm v,\bm v) \ge \varepsilon\Vert \nabla\times\bm v \Vert_0^2 + \rho_0\Vert \bm v \Vert_0^2 + \frac{1}{2}\langle\vert\bm\beta\cdot\bm n\vert,\vert\bm v\vert^2\rangle_{\Gamma} =: \norma{\bm v}^2.
\end{equation}

\begin{remark}
    We claim that $\norma{\cdot}$ does not constitute a complete norm on the space $\bm{\mathcal{S}}$. Moreover, due to the lack of control over the term $(L_{\bm{\beta}}\bm{u},\bm{v})$, the bilinear form is not bounded in the $\norma{\cdot}$-norm. Consequently, the Lax-Milgram theorem cannot be applied directly to establish the well-posedness of the weak formulation \eqref{curl_advection_diffusion_equation_weakform}. A detailed analysis of this issue lies beyond the scope of this work. We therefore assume the existence of a solution to \eqref{curl_advection_diffusion_equation_weakform} as an additional hypothesis.
\end{remark}

}

\par

For an interior facet \(F \in \mathcal{F}_h^{\circ}\) shared by two elements, we arbitrarily label one as \(T^{+}\) and the other as \(T^{-}\). Let \(\bm{n}^{\pm}\) denote the unit outward normal vectors of \(T^{\pm}\). The jump and average operators are defined as
$$
\llbracket\bm{v}\rrbracket = \bm{v}^+ - \bm{v}^-, \vavg{\bm{v}} = \frac{1}{2}(\bm{v}^+ + \bm{v}^-).
$$
Moreover, we define the weighted average as  
\[
\vavg{\bm{v}}_{\alpha} = \alpha^+\bm{v}^+ + \alpha^-\bm{v}^-,
\]  
where the weight function \(\alpha\) is a two-valued function of order \(\mathcal{O}(1)\) on all interior facets. Specifically, for each point \({x}\) on an interior facet, \(\alpha\) takes one value \(\alpha^+({x})\) associated with \(T^+\) and another \(\alpha^-({x})\) associated with \(T^-\), such that \(\alpha^+({x}) + \alpha^-({x}) = 1\) for all \({x} \in \bigcup_{F \in \mathcal{F}_h^{\circ}} F\). For a boundary facet \(F \in \mathcal{F}_h^{\partial}\), let $\bm{n}$ denote the unit outward normal vector of $\Omega$, $\bm{n}^+:=\bm{n}$ and define  
\[
\llbracket\bm{v}\rrbracket = \bm{v}, \quad \vavg{\bm{v}}_{\alpha} = \alpha \bm{v},
\]  
where the boundary values of \(\alpha\) are given by \(\alpha|_{\Gamma^+} = 0\) and \(\alpha|_{\Gamma^-} = 1\). Note that \(\alpha\) can also be interpreted as a function defined on the entire mesh skeleton \(\bigcup_{T \in \mathcal{T}_h} \partial T\). In such cases, we write \(\alpha|_{\partial T}\) to emphasize its restriction to the boundary of an individual element \(T\). 
\par
Several useful identities for the weighted average operators are readily verified:
\begin{lemma}[weighted average identities]\label{weighted_average_identities_lemma}
For any two vectors $\bm{v}_1, \bm{v}_2$, the following identities hold:
\begin{subequations}\label{identities_weighted_average}
    \begin{align}
        \vavg{\bm{v}_1}_{\alpha} &= \vavg{\bm{v}_1} + \frac{[\alpha]}{2}\llbracket\bm{v}_1\rrbracket, \label{identities_weighted_average_1} \\
        \llbracket\bm{v}_1\cdot\bm{v}_2\rrbracket &= \vavg{\bm{v}_1}_{\alpha}\cdot\llbracket\bm{v}_2\rrbracket + \llbracket\bm{v}_1\rrbracket\cdot\vavg{\bm{v}_2}_{1-\alpha}, \label{identities_weighted_average_2} \\
        \frac{1}{2}\llbracket\bm{v}_1\cdot\bm{v}_1\rrbracket &= \llbracket\bm{v}_1\rrbracket\cdot\vavg{\bm{v}_1}_{\alpha} - \frac{1}{2}[\alpha]\vert\llbracket\bm{v}_1\rrbracket\vert^2.\label{identities_weighted_average_3}
    \end{align}
\end{subequations}
Here $[\alpha]:=\alpha^+-\alpha^-$.
\end{lemma}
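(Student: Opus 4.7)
The three identities are purely pointwise algebraic statements about the two-valued traces $\bm{v}_1^{\pm},\bm{v}_2^{\pm},\alpha^{\pm}$, so the natural strategy is simply direct substitution of the definitions, exploiting the two standing relations $\alpha^+ + \alpha^- = 1$ and $[\alpha] = \alpha^+ - \alpha^-$ (equivalently $\alpha^{\pm} = (1 \mp [\alpha])/2 \cdot\ldots$). I do not expect any genuine obstacle here; the proof is a short calculation, and the only mildly delicate point is consistency of sign conventions across the $\pm$ labels, which I would set up once and then reuse throughout.

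For identity \eqref{identities_weighted_average_1}, the plan is to expand the right-hand side directly:
\begin{equation*}
\vavg{\bm{v}_1} + \tfrac{[\alpha]}{2}\llbracket\bm{v}_1\rrbracket
= \tfrac12(\bm{v}_1^+ + \bm{v}_1^-) + \tfrac{\alpha^+ - \alpha^-}{2}(\bm{v}_1^+ - \bm{v}_1^-)
= \tfrac{1+\alpha^+-\alpha^-}{2}\bm{v}_1^+ + \tfrac{1-\alpha^++\alpha^-}{2}\bm{v}_1^-,
\end{equation*}
and then use $\alpha^+ + \alpha^- = 1$ to rewrite the coefficients as $\alpha^+$ and $\alpha^-$, respectively, recovering $\vavg{\bm{v}_1}_{\alpha}$.

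For identity \eqref{identities_weighted_average_2}, I would likewise expand the right-hand side, noting that $(1-\alpha)^{\pm} = \alpha^{\mp}$, so $\vavg{\bm{v}_2}_{1-\alpha} = \alpha^-\bm{v}_2^+ + \alpha^+\bm{v}_2^-$. Multiplying out
\begin{equation*}
(\alpha^+\bm{v}_1^+ + \alpha^-\bm{v}_1^-)\!\cdot\!(\bm{v}_2^+ - \bm{v}_2^-) + (\bm{v}_1^+ - \bm{v}_1^-)\!\cdot\!(\alpha^-\bm{v}_2^+ + \alpha^+\bm{v}_2^-)
\end{equation*}
the cross terms $\bm{v}_1^{\pm}\cdot\bm{v}_2^{\mp}$ cancel, while the diagonal terms combine with coefficient $\alpha^+ + \alpha^- = 1$, leaving exactly $\bm{v}_1^+\cdot\bm{v}_2^+ - \bm{v}_1^-\cdot\bm{v}_2^- = \llbracket\bm{v}_1\cdot\bm{v}_2\rrbracket$.

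Identity \eqref{identities_weighted_average_3} I would derive as a corollary of \eqref{identities_weighted_average_1} rather than redoing the expansion. Using \eqref{identities_weighted_average_1},
\begin{equation*}
\llbracket\bm{v}_1\rrbracket\cdot\vavg{\bm{v}_1}_{\alpha} - \tfrac12[\alpha]\lvert\llbracket\bm{v}_1\rrbracket\rvert^2
= \llbracket\bm{v}_1\rrbracket\cdot\vavg{\bm{v}_1} + \tfrac{[\alpha]}{2}\lvert\llbracket\bm{v}_1\rrbracket\rvert^2 - \tfrac{[\alpha]}{2}\lvert\llbracket\bm{v}_1\rrbracket\rvert^2 = \llbracket\bm{v}_1\rrbracket\cdot\vavg{\bm{v}_1},
\end{equation*}
and the classical unweighted identity $\llbracket\bm{v}_1\rrbracket\cdot\vavg{\bm{v}_1} = \tfrac12\llbracket\bm{v}_1\cdot\bm{v}_1\rrbracket$ (which follows at once from $(\bm{v}_1^+-\bm{v}_1^-)\cdot\tfrac12(\bm{v}_1^++\bm{v}_1^-) = \tfrac12(|\bm{v}_1^+|^2 - |\bm{v}_1^-|^2)$) closes the argument. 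The boundary-facet case is then an immediate specialisation under the conventions $\bm{v}^- = 0$, $\alpha|_{\Gamma^+}=0$, $\alpha|_{\Gamma^-}=1$, so no separate verification is needed.
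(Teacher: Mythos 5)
Your proof is correct and takes essentially the same approach as the paper: direct pointwise expansion of the two-valued traces using $\alpha^+ + \alpha^- = 1$, with \eqref{identities_weighted_average_2} verified by the identical cancellation of cross terms. The only cosmetic difference is in \eqref{identities_weighted_average_3}, where you combine \eqref{identities_weighted_average_1} with the classical unweighted identity $\llbracket\bm{v}_1\rrbracket\cdot\vavg{\bm{v}_1} = \tfrac{1}{2}\llbracket\bm{v}_1\cdot\bm{v}_1\rrbracket$, whereas the paper obtains it by setting $\bm{v}_2=\bm{v}_1$ in \eqref{identities_weighted_average_2} and then invoking \eqref{identities_weighted_average_1}; both routes are equally short and valid.
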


\begin{proof}

{
Given that $\alpha^++\alpha^-=1$, we obtain
\begin{equation}
\begin{aligned}
    \vavg{\bm v_1}_{\alpha} =& \frac{\alpha^+\bm v_1^+ + \alpha^-\bm v_1^-}{2} = \frac{\bm v_1^++\bm v_1^-}{2} + (\alpha^+-\frac{1}{2})\bm v_1^+ + (\alpha^--\frac{1}{2})\bm v_1^- \\
    =& \vavg{\bm v_1} + (\alpha^+-\frac{\alpha^++\alpha^-}{2})\bm v_1^+ + (\alpha^--\frac{\alpha^++\alpha^-}{2})\bm v_1^- =  \vavg{\bm v_1}  + \frac{\alpha^+-\alpha^-}{2}\bm v_1^+ - \frac{\alpha^+-\alpha^-}{2}\bm v_1^-\\
    =&\vavg{\bm v_1}  +\frac{[\alpha]}{2}\llbracket\bm v_1\rrbracket,
\end{aligned}
\end{equation}
which establishes \eqref{identities_weighted_average_1}.} Identity \eqref{identities_weighted_average_2} can be verified through direct computation:
\begin{equation}
\begin{aligned}
&\vavg{\bm{v}_1}_{\alpha}\cdot\llbracket\bm{v}_2\rrbracket + \llbracket\bm{v}_1\rrbracket\cdot\vavg{\bm{v}_2}_{1-\alpha} = (\alpha^+\bm{v}_1^++\alpha^-\bm{v}_1^-)\cdot(\bm{v}_2^+-\bm{v}_2^-) + (\bm{v}_1^+-\bm{v}_1^-)\cdot(\alpha^-\bm{v}_2^++\alpha^+\bm{v}_2^-) \\
=& \alpha^+\bm{v}_1^+\cdot\bm{v}_2^+-\alpha^+\bm{v}_1^+\cdot\bm{v}_2^-+\alpha^-\bm{v}_1^-\cdot\bm{v}_2^+-\alpha^-\bm{v}_1^-\cdot\bm{v}_2^-+\alpha^-\bm{v}_1^+\cdot\bm{v}_2^++\alpha^+\bm{v}_1^+\cdot\bm{v}_2^--\alpha^-\bm{v}_1^-\cdot\bm{v}_2^+-\alpha^+\bm{v}_1^-\cdot\bm{v}_2^-\\
=&(\alpha^++\alpha^-)\bm{v}_1^+\cdot\bm{v}_2^+ + (-\alpha^++\alpha^+)\bm{v}_1^+\cdot\bm{v}_2^- + (\alpha^--\alpha^-)\bm{v}_1^-\cdot\bm{v}_2^+-(\alpha^++\alpha^-)\bm{v}_1^-\cdot\bm{v}_2^-\\
=&\bm{v}_1^+\cdot\bm{v}_2^+-\bm{v}_1^-\cdot\bm{v}_2^-=\llbracket\bm{v}_1\cdot\bm{v}_2\rrbracket.
\end{aligned}
\end{equation}
Combining \eqref{identities_weighted_average_1} with \eqref{identities_weighted_average_2} yields
\begin{equation}
\begin{aligned}
&\llbracket\bm{v}_1\cdot\bm{v}_1\rrbracket = \llbracket\bm{v}_1\rrbracket\cdot( \vavg{\bm{v}_1}_\alpha + \vavg{\bm{v}_1}_{1-\alpha} )\\
=&\llbracket\bm{v}_1\rrbracket\cdot( \vavg{\bm{v}_1}_\alpha + \vavg{\bm{v}_1}_{\alpha} - [\alpha]\llbracket\bm{v}_1\rrbracket ) = 2\llbracket\bm{v}_1\rrbracket\cdot\vavg{\bm{v}_1}_{\alpha} - [\alpha]\vert\llbracket\bm{v}_1\rrbracket\vert^2,
\end{aligned}    
\end{equation}
This completes the proof of \eqref{identities_weighted_average_3}.

\end{proof}

\par
Let $\bm{V}_h$ be an $\bm{H}(\mathrm{curl})$-conforming finite element space on $\mathcal{T}_h$. We further define
$$
\bm{V}_{h,0} := \{ \bm{v}_h\in\bm{V}_h : \gamma_t(\bm{v}_h)|_{\Gamma}=0 \}.
$$
Given facet weight $\alpha$, we piecewise introduce the lifting operator $\bm{r}_{\alpha}$ as $\bm{r}_{\alpha}|_T: (L^2(\partial T))^3\rightarrow\bm{V}_h|_T$, which satisfies
\begin{equation}\label{lifting_operator_definition}
    \int_T \bm{r}_{\alpha}(\bm{v})\cdot\bm{w}_h \mathrm{d}x = \int_{\partial T} \alpha|_{\partial T}\bm{v}\cdot\bm{w}_h \mathrm{d}s,~~\forall\bm{w}_h\in\bm{V}_h|_T.
\end{equation}
In particular, $\bm{r}_\alpha$ is written as $\bm{r}$ when $\alpha^{\pm}=1/2$ on every interior facet. 
For all $ \bm{u}_h, \bm{v}_h\in\bm{V}_h$, the following identity holds:
\begin{equation}\label{identity_about_lifting_1}
\begin{split}
    &\sum_{F\in\mathcal{F}_h^{\circ}}\langle\bm{\beta}\cdot\bm{n}^+,\llbracket\bm{u}_h\rrbracket\cdot\vavg{\bm{v}_h}_{\alpha}\rangle_F+\sum_{F\in\mathcal{F}_h^-}\langle\bm{\beta}\cdot\bm{n},\bm{u}_h\cdot\bm{v}_h\rangle_F \\
    =& \sum_{T\in\mathcal{T}_h}\langle \bm{\beta}\cdot\bm{n}^+,\alpha|_{\partial T} 
\llbracket\bm{u}_h\rrbracket\cdot\bm{v}_h \rangle_{\partial T} = \sum_{T\in\mathcal{T}_h} ( \bm{r}_{\alpha}(\bm{\beta}\cdot\bm{n}^+\llbracket\bm{u}_h\rrbracket) , \bm{v}_h )_T.
\end{split}
\end{equation}

\par
Let ${P}_r(T)$ be the space of polynomials on $T$ with degree less than or equal to $r$. The following inverse and trace inequalities will be used in the analysis.
\begin{lemma}[inverse inequality]\label{standard_inverse_inequality_lemma}
Let $v_h\in P_r(T)$ for some $r\ge0$. Then there exists a positive constant $C$ independent of $h_T$, such that
\begin{equation}\label{standard_inverse_inequality}
     |v_h|_{1,T}\le Ch_T^{-1}\Vert v_h\Vert_{0,T}.
\end{equation}
\end{lemma}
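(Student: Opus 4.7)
The plan is to establish \eqref{standard_inverse_inequality} by the classical scaling argument that reduces the estimate on a generic element $T$ to a dimension-counting statement on a fixed reference element $\hat{T}$, using shape regularity to control the Jacobians involved.

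First I would fix a reference tetrahedron $\hat{T}$ and, for each $T \in \mathcal{T}_h$, introduce the affine map $F_T\colon \hat{T}\to T$, $F_T(\hat{x}) = B_T\hat{x} + b_T$, with Jacobian $B_T\in\mathbb{R}^{3\times 3}$. Shape regularity of $\mathcal{T}_h$ yields the standard Jacobian bounds $\|B_T\|\lesssim h_T$, $\|B_T^{-1}\|\lesssim h_T^{-1}$, and $|\det B_T|\simeq h_T^3$, with constants depending only on the shape-regularity parameter. For $v_h \in P_r(T)$, define the pulled-back polynomial $\hat{v}(\hat{x}) := v_h(F_T(\hat{x}))$, which belongs to the finite-dimensional space $P_r(\hat{T})$.

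Next I would transfer the relevant norms. A change of variables gives $\|v_h\|_{0,T}^2 = |\det B_T|\,\|\hat{v}\|_{0,\hat{T}}^2 \simeq h_T^3\|\hat{v}\|_{0,\hat{T}}^2$, while the chain rule $\nabla v_h(F_T(\hat{x})) = B_T^{-T}\hat{\nabla}\hat{v}(\hat{x})$ combined with the same change of variables yields $|v_h|_{1,T}^2 \lesssim h_T^{-2}\cdot h_T^3\,|\hat{v}|_{1,\hat{T}}^2$. On the finite-dimensional space $P_r(\hat{T})$, the seminorm $|\cdot|_{1,\hat{T}}$ and the norm $\|\cdot\|_{0,\hat{T}}$ are equivalent, so there is a constant $\hat{C} = \hat{C}(r)$ with $|\hat{v}|_{1,\hat{T}}\le \hat{C}\,\|\hat{v}\|_{0,\hat{T}}$ for all $\hat{v}\in P_r(\hat{T})$. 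Combining these three ingredients produces
\begin{equation*}
|v_h|_{1,T}^2 \lesssim h_T\,|\hat{v}|_{1,\hat{T}}^2 \lesssim h_T\,\|\hat{v}\|_{0,\hat{T}}^2 \lesssim h_T^{-2}\,\|v_h\|_{0,T}^2,
\end{equation*}
which is exactly \eqref{standard_inverse_inequality} with a constant $C$ independent of $h_T$.

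There is really no hard step here: the only non-tautological input is the norm equivalence on the finite-dimensional space $P_r(\hat{T})$, which follows from the fact that all norms on a finite-dimensional vector space are equivalent. The one point deserving care is tracking the powers of $h_T$ correctly through the change of variables, since the volume factor $h_T^3$ from $|\det B_T|$ must cancel between the two sides, leaving behind only the $h_T^{-2}$ produced by $\|B_T^{-1}\|^2$. I would also note explicitly that the hidden constant depends on $r$ and on the shape-regularity constant of $\mathcal{T}_h$, but not on $h_T$, which is what the lemma claims.
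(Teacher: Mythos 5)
Your proof is correct: the scaling argument via the affine map $F_T$, the Jacobian bounds from shape regularity, and the equivalence of norms on the finite-dimensional space $P_r(\hat{T})$ are exactly the standard route, and the powers of $h_T$ are tracked correctly so that the volume factor $|\det B_T|$ cancels and only $h_T^{-2}$ survives. The paper itself states this lemma without proof as a classical result, so your argument simply supplies the canonical justification; the only cosmetic remark is that the paper also invokes the inequality on two-dimensional meshes, where the identical argument works with $|\det B_T|\simeq h_T^d$ in place of $h_T^3$, the dimension-dependent factor cancelling in the same way.
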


\begin{lemma}[trace inequality]\label{standard_trace_inequality_lemma}
There exists a  positive constant $C$ depending only on the shape regularity constant, such that for any $v\in H^1(T)$
\begin{equation}\label{standard_trace_inequality}
    \Vert v\Vert_{0,\partial T}^2\le C\Vert v \Vert_{0,T} \left( h_T^{-2}\Vert v \Vert_{0,T}^2+\vert v \vert_{1,T}^2   \right)^{1/2}.
\end{equation}
\end{lemma}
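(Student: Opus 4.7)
The plan is a classical reference-element scaling argument. First, I would map $T$ to a fixed reference tetrahedron $\hat T$ via an affine bijection $F_T$; shape regularity guarantees uniform bounds on the Jacobian, and the usual change-of-variable identities yield the scalings
$$\|v\|_{0,T}^2 \simeq h_T^3\|\hat v\|_{0,\hat T}^2, \qquad \|v\|_{0,\partial T}^2 \simeq h_T^2\|\hat v\|_{0,\partial \hat T}^2, \qquad |v|_{1,T}^2 \simeq h_T\,|\hat v|_{1,\hat T}^2,$$
where $\hat v := v\circ F_T^{-1}$. All hidden constants depend only on the shape-regularity constant, so the problem is reduced to proving the inequality on the fixed reference element.

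Second, on the reference domain $\hat T$ (a fixed Lipschitz polyhedron) I would establish the multiplicative trace inequality
$$\|\hat v\|_{0,\partial \hat T}^2 \leq C\,\|\hat v\|_{0,\hat T}\,\bigl(\|\hat v\|_{0,\hat T}^2 + |\hat v|_{1,\hat T}^2\bigr)^{1/2}.$$
The standard route is to choose a smooth vector field $\bm\psi$ on $\overline{\hat T}$ satisfying $\bm\psi\cdot\hat{\bm n} \geq c_0>0$ on $\partial\hat T$ (such a field exists by the Lipschitz regularity of $\hat T$), apply the divergence theorem to $\hat v^2\bm\psi$, and invoke Cauchy--Schwarz:
$$c_0\|\hat v\|_{0,\partial\hat T}^2 \leq \int_{\hat T}\bigl(\hat v^2\,\nabla\!\cdot\!\bm\psi + 2\hat v\,\bm\psi\cdot\nabla\hat v\bigr)\,\mathrm{d}\hat x \leq C\bigl(\|\hat v\|_{0,\hat T}^2 + \|\hat v\|_{0,\hat T}\,|\hat v|_{1,\hat T}\bigr).$$
The elementary bound $a^2+ab \leq \sqrt{2}\,a(a^2+b^2)^{1/2}$ with $a=\|\hat v\|_{0,\hat T}$, $b=|\hat v|_{1,\hat T}$ then gives the multiplicative form, and density of $C^{\infty}(\overline{\hat T})$ in $H^1(\hat T)$ extends the result from smooth to general $H^1$ functions.

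Finally, I would substitute the scaling relations into this reference-element inequality. Multiplying the left-hand side by $h_T^{-2}$ and the right-hand side by $h_T^{-3/2}\cdot h_T^{-1/2} = h_T^{-2}$ (coming from $\|v\|_{0,T}$ and from $(h_T^{-2}\|v\|_{0,T}^2 + |v|_{1,T}^2)^{1/2}$ respectively) shows that all powers of $h_T$ cancel, yielding \eqref{standard_trace_inequality}. The argument has no real conceptual obstacle; the delicate point is purely the bookkeeping of $h_T$-exponents in three dimensions, and the only nontrivial ingredient—existence of a vector field $\bm\psi$ with a strictly positive normal component on $\partial\hat T$—is absorbed into the constant $C$ once the reference element is fixed.
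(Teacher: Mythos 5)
Your proof is correct. The paper itself states this multiplicative trace inequality without proof, treating it as a standard result, so there is no in-paper argument to compare against; your reference-element scaling combined with the divergence-theorem trick (a field $\bm{\psi}$ with $\bm{\psi}\cdot\hat{\bm{n}}\ge c_0>0$ on $\partial\hat T$, Cauchy--Schwarz, and the elementary bound $a^2+ab\le\sqrt{2}\,a(a^2+b^2)^{1/2}$) is a standard and complete route, and your $h_T$-exponent bookkeeping ($h_T^2$ for the boundary norm, $h_T^3$ for the volume norm, $h_T$ for the seminorm) is right. One small simplification worth noting: the reference-element step can be skipped entirely by applying the divergence theorem directly on $T$ with $\bm{\psi}(x)=x-x_T$, $x_T$ the incenter, since then $\bm{\psi}\cdot\bm{n}\ge\rho_T$ on $\partial T$, $\nabla\cdot\bm{\psi}=3$, and $|\bm{\psi}|\le h_T$; shape regularity gives $\rho_T\gtrsim h_T$, so the estimate $\rho_T\Vert v\Vert_{0,\partial T}^2\le 3\Vert v\Vert_{0,T}^2+2h_T\Vert v\Vert_{0,T}\vert v\vert_{1,T}$ yields \eqref{standard_trace_inequality} with the correct $h_T$-dependence and no change-of-variables bookkeeping at all.
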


Combining the above two inequalities, we obtain the following lemma.
\begin{lemma}[finite element trace inequality]\label{trace_inequality_lemma}
Let $v_h\in P_r(T)$ for some $r\ge0$. Then there exists a positive constant $C$ independent of $h_T$, such that
\begin{equation}\label{trace_inequality}
    \Vert v_h\Vert_{0,\partial T}\le Ch_T^{-1/2}\Vert v_h\Vert_{0,T}.
\end{equation}
\end{lemma}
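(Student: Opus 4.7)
The plan is to chain together the two preceding lemmas in a single short calculation, treating this as a direct consequence of the standard inverse inequality (Lemma~\ref{standard_inverse_inequality_lemma}) and the standard trace inequality (Lemma~\ref{standard_trace_inequality_lemma}). Because $v_h \in P_r(T)$ is in particular a member of $H^1(T)$, both previous lemmas apply to it.

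First, I would invoke Lemma~\ref{standard_trace_inequality_lemma} to obtain
\begin{equation*}
    \Vert v_h\Vert_{0,\partial T}^2 \le C \Vert v_h\Vert_{0,T}\left( h_T^{-2}\Vert v_h\Vert_{0,T}^2 + |v_h|_{1,T}^2\right)^{1/2}.
\end{equation*}
Next, I would apply Lemma~\ref{standard_inverse_inequality_lemma} to bound the semi-norm as $|v_h|_{1,T}^2 \le C h_T^{-2}\Vert v_h\Vert_{0,T}^2$. Substituting this into the previous display, both terms inside the parenthesis become of order $h_T^{-2}\Vert v_h\Vert_{0,T}^2$, so the quantity in parentheses is bounded by $C h_T^{-2}\Vert v_h\Vert_{0,T}^2$, and taking its square root yields
\begin{equation*}
    \Vert v_h\Vert_{0,\partial T}^2 \le C h_T^{-1}\Vert v_h\Vert_{0,T}^2.
\end{equation*}
Taking square roots of both sides gives the desired estimate \eqref{trace_inequality}, with a constant that depends only on the polynomial degree $r$ and the shape-regularity constant, and is otherwise independent of $h_T$.

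There is essentially no conceptual obstacle here; the result is a textbook consequence of combining an $H^1$-to-$L^2$-trace bound with a polynomial inverse estimate. The only point worth flagging is bookkeeping on the constants: the constant in Lemma~\ref{standard_inverse_inequality_lemma} depends on $r$ (through the finite-dimensional equivalence of norms on $P_r(T)$), so the constant $C$ in \eqref{trace_inequality} implicitly depends on $r$ as well, which is acceptable in the present setting since $r$ is fixed by the choice of finite element space $\bm{V}_h$.
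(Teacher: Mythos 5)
Your proposal is correct and follows exactly the route the paper intends: it states that Lemma~\ref{trace_inequality_lemma} is obtained by combining the inverse inequality \eqref{standard_inverse_inequality} with the trace inequality \eqref{standard_trace_inequality}, which is precisely your chain of estimates. The remark on the constant's dependence on the polynomial degree $r$ is a harmless (and accurate) addition.
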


\begin{remark}
Set $\bm{w}_h=\bm{r}_{\alpha}(\bm{v})$ in \eqref{lifting_operator_definition} and use the inequality \eqref{trace_inequality}, we can deduce that
\begin{equation}\label{lifting_operator_stability_estimate}
    \Vert\bm{r}_{\alpha}(\bm{v})\Vert_{0,T}\le Ch_T^{-1/2}\Vert\bm{v}\Vert_{0,\partial T},
\end{equation}
where $C$ is a constant independent of $h_T$. 
\end{remark}

\section{SUPG method}\label{sect:SUPGscheme}

This section presents the development of the streamline upwind/Petrov-Galerkin (SUPG) method for the magnetic advection-diffusion problem. Motivated by the scalar formulation, where the test functions are modified elementwise to $v + \delta_T \bm{\beta}\cdot\nabla v$, with $\delta_T \geq 0$ a piecewise constant stabilization parameter \citep{roos2008robust}. We extend this stabilization approach to the vector-valued problem.

\par

We define \(\tilde{\bm{u}}_{bh} \in \bm{V}_h\) as a function whose \(\bm{H}(\mathrm{curl})\)-trace approximates the Dirichlet boundary data of the exact solution \(\bm{u}\). Note that \(\gamma_t(\bm{u})|_{\Gamma}\) can be expressed in terms of the given data \(\bm{g}\). The standard Galerkin formulation for \eqref{curl_advection_diffusion_equation} seeks  $\bm{u}_h\in \tilde{\bm{u}}_{bh} + \bm{V}_{h,0}$ such that 
\begin{equation}\label{standard_Galerkin_method}
a_0(\bm{u}_h,\bm{v}_h) = f_0(\bm{v}_h), ~~\forall\bm{v}_h\in\bm{V}_{h,0},
\end{equation}
where
\begin{subequations}
    \begin{align}
        a_0(\bm{u}_h,\bm{v}_h) =& (\varepsilon\nabla\times\bm{u}_h,\nabla\times\bm{v}_h) + (L_{\bm{\beta},h}\bm{u}_h,\bm{v}_h) + (\gamma\bm{u}_h,\bm{v}_h)\notag \\
        &-\sum_{F\in\mathcal{F}_h^{\circ}}\langle\bm{\beta}\cdot\bm{n}^+,\llbracket\bm{u}_h\rrbracket\cdot\vavg{\bm{v}_h}\rangle_F-\sum_{F\in\mathcal{F}_h^-}\langle\bm{\beta}\cdot\bm{n},\bm{u}_h\cdot\bm{v}_h\rangle_F,\label{standard_Galerkin_method_bilinear_form}\\
        f_0(\bm{v}_h) =& (\bm{f},\bm{v}_h) - \sum_{F\in\mathcal{F}_h^-}\langle\bm{\beta}\cdot\bm{n},\bm{g}\cdot\bm{v}_h\rangle_F.
    \end{align}    
\end{subequations}
Here, $L_{\bm{\beta},h}$ denotes the discrete Lie advection operator, which is evaluated in a piecewise manner on each element of the mesh.
Using the identity about the lifting operator \eqref{identity_about_lifting_1}, we can rewrite $a_0(\bm{u}_h,\bm{v}_h)$ as
\begin{equation}
a_0(\bm{u}_h,\bm{v}_h) = (\varepsilon\nabla\times\bm{u}_h,\nabla\times\bm{v}_h) + (L_{\bm{\beta},h}\bm{u}_h,\bm{v}_h) - (\bm{r}(\bm{\beta}\cdot\bm{n}^+\llbracket\bm{u}_h\rrbracket),\bm{v}_h) + (\gamma\bm{u}_h,\bm{v}_h).
\end{equation}
{Following the same procedure as in the continuous setting, we obtain the Galerkin orthogonality
\begin{equation}
a_0(\bm{u},\bm{v}_h) = f_0(\bm{v}_h), ~~\forall\bm{v}_h\in\bm{V}_{h,0},
\end{equation}
as well as the discrete coercivity
\begin{equation}\label{discrete coercivity standard Galerkin}
a_0(\bm{v}_h,\bm{v}_h) \ge \varepsilon\Vert \nabla\times\bm v_h \Vert_0^2 + \rho_0\Vert \bm v_h \Vert_0^2 + \frac{1}{2}\sum_{F\in\mathcal F_h^\partial}\langle\vert\bm\beta\cdot\bm n\vert,\vert\bm v_h\vert^2\rangle_F,
\end{equation}
which ensures the well-posedness of \eqref{standard_Galerkin_method}. Note that \eqref{standard_Galerkin_method_bilinear_form} includes an additional term,
\(-\sum_{F \in \mathcal{F}_h^{\circ}} \langle \bm{\beta} \cdot \bm{n}^+, \llbracket \bm{u}_h \rrbracket \cdot \vavg{\bm{v}_h} \rangle_F\),
compared to its continuous counterpart \eqref{curl_advection_diffusion_equation_weakform_bilinear}. This term arises because $\bm{V}_h \not\subset \{ \bm v : L_{\bm\beta}\bm v \in \left(L^2(\Omega)\right)^3 \}$, which requires applying the integration-by-parts identity \eqref{identities_Lie_advection_2} elementwise to \(\frac{1}{2}(L_{\bm{\beta}} \bm{v}_h, \bm{v}_h)\) in proving \eqref{discrete coercivity standard Galerkin}. However, this formulation is unstable and exhibits numerical oscillations in the presence of boundary or interior layers, as demonstrated in Figure \ref{standard_Galerkin_bad_sect3}.
\begin{figure}[!htbp]
\centering
\includegraphics[width=0.45\textwidth]{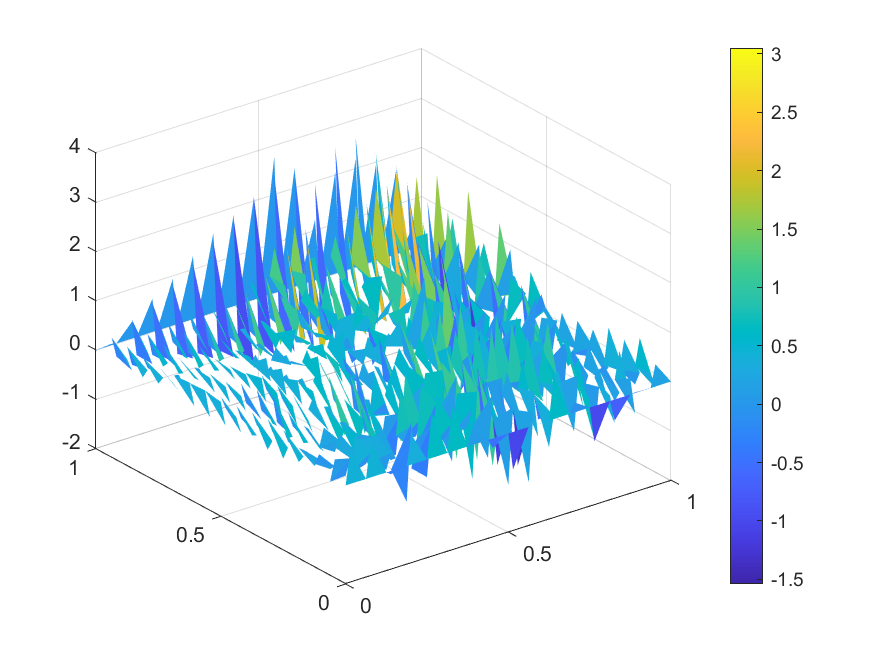} 
\caption{Severe spurious oscillations in the standard Galerkin method \eqref{standard_Galerkin_method}. The computational domain is $(0,1)^2$ with parameters $\varepsilon = 10^{-6}$, $\bm{\beta} = [1, 2]^T$, $\gamma = 0$, $\bm{f} = [1, 1]^T$, and $\bm{g} \equiv [0, 0]^T$. The solution is computed on a uniform mesh of \(2 \times 16^2\) triangles using first-order N\'{e}d\'{e}lec elements of the second kind.} \label{standard_Galerkin_bad_sect3}
\end{figure}
Moreover, even for smooth exact solutions, a reduction in the convergence order is observed in the advection-dominated regime. We provide detailed numerical studies in Section \ref{sect:numerical_experiment}.

Intuitively, the deficiency of the standard Galerkin method originates from the terms $(L_{\bm{\beta},h}\bm{u}_h,\bm{v}_h)$ and $\sum_{F\in\mathcal{F}_h^{\circ}}\langle\bm{\beta}\cdot\bm{n}^+,\llbracket\bm{u}_h\rrbracket\cdot\vavg{\bm{v}_h}\rangle_F$, which are not adequately controlled by the norm $\norma{\cdot}$. This motivates the introduction of corresponding stabilization terms. The derivation of our stabilized SUPG scheme consists of two key steps:

\begin{itemize}
\item \textbf{Step 1:} Introduce the stabilization term
\begin{equation}
    S_h^1(\bm u_h, \bm v_h) = \frac{1}{2}\sum_{F\in\mathcal{F}_h^\circ}\langle-[\alpha]\bm{\beta}\cdot\bm{n}^+,\llbracket\bm{u}_h\rrbracket\cdot\llbracket\bm{v}_h\rrbracket\rangle_F,
\end{equation}
where $\alpha$ is a weight function assumed to satisfy
\begin{equation}\label{alpha_need}
-\bm{\beta}(x)\cdot\bm{n}^+(x)(\alpha^+(x)-\alpha^-(x))\ge C_{up}\lvert\bm{\beta}(x)\cdot\bm{n}(x)\rvert,
\end{equation}
on interior facets for a positive constant $C_{up}$, where ensures that $S_h^1(\bm v_h,\bm v_h)$ is nonnegative. Recall that $\alpha|_{\Gamma^+}=0$ and $\alpha|_{\Gamma^-}=1$ are predefined, which implies that condition \eqref{alpha_need} holds on boundary facets as well. Adding $S_h^1(\cdot,\cdot)$ to the bilinear form $a_0(\cdot,\cdot)$ provides preliminary stabilization. Since the exact solution $\bm{u} \in \{ \bm v : L_{\bm\beta}\bm v \in \left(L^2(\Omega)\right)^3 \}$ implies $\bm\beta\cdot\bm n\llbracket\bm{u}\rrbracket \equiv 0$ on any interior facet due to \eqref{identities_Lie_advection_2}, this modification preserves the consistency of the scheme.

An application of identities \eqref{identities_weighted_average_1} and \eqref{identity_about_lifting_1} yields the following reformulation:
\begin{equation}
\begin{aligned}
    a_0(\bm u_h,\bm v_h) + S_h^1(\bm u_h,\bm v_h) =&(\varepsilon\nabla\times\bm{u}_h,\nabla\times\bm{v}_h) + (L_{\bm{\beta},h}\bm{u}_h,\bm{v}_h) + (\gamma\bm{u}_h,\bm{v}_h) \\
        &-\sum_{F\in\mathcal{F}_h^{\circ}}\left\langle\bm{\beta}\cdot\bm{n}^+,\llbracket\bm{u}_h\rrbracket\cdot(\vavg{\bm{v}_h}+\frac{[\alpha]}{2}\llbracket\bm v_h\rrbracket)\right\rangle_F-\sum_{F\in\mathcal{F}_h^-}\langle\bm{\beta}\cdot\bm{n},\bm{u}_h\cdot\bm{v}_h\rangle_F \\
        =&(\varepsilon\nabla\times\bm{u}_h,\nabla\times\bm{v}_h) + (L_{\bm{\beta},h}\bm{u}_h,\bm{v}_h) + (\gamma\bm{u}_h,\bm{v}_h) \\
        &-\sum_{F\in\mathcal{F}_h^{\circ}}\left\langle\bm{\beta}\cdot\bm{n}^+,\llbracket\bm{u}_h\rrbracket\cdot\vavg{\bm{v}_h}_\alpha\right\rangle_F-\sum_{F\in\mathcal{F}_h^-}\langle\bm{\beta}\cdot\bm{n},\bm{u}_h\cdot\bm{v}_h\rangle_F \\
        =& (\varepsilon\nabla\times\bm{u}_h,\nabla\times\bm{v}_h) + (L_{\bm{\beta},h}\bm{u}_h,\bm{v}_h) - (\bm{r}_\alpha(\bm{\beta}\cdot\bm{n}^+\llbracket\bm{u}_h\rrbracket),\bm{v}_h) + (\gamma\bm{u}_h,\bm{v}_h).
\end{aligned}
\end{equation}
Based on this, we define the discrete magnetic advection operator element-wise as $\tilde{L}_{\bm{\beta},h}\bm{u}_h|_T := {L}_{\bm{\beta},h}\bm{u}_h|_T - \bm{r}_{\alpha}|_T(\bm{\beta}\cdot\bm{n}\llbracket\bm{u}_h\rrbracket)$. The modified advection-diffusion-reaction operator is then given by $\tilde{\mathcal{A}}_h\bm{u}_h = \nabla\times(\varepsilon\nabla\times\bm{u}_h) + \tilde{L}_{\bm{\beta},h}\bm{u}_h + \gamma\bm{u}_h$.

\item \textbf{Step 2:} Inspired by the SUPG method for scalar problems, modifying the test functions $\bm v_h$ elementwise to $\bm v_h + \delta_T\tilde{L}_{\bm \beta,h}\bm v_h$ introduces a new stabilization term
\begin{equation}
S_h^2(\bm{u}_h,\bm{v}_h) = \sum_{T\in\mathcal{T}_h}( \tilde{\mathcal{A}}_h\bm{u}_h , \delta_T\tilde{L}_{\bm{\beta},h}\bm{v}_h )_T.
\end{equation}
\end{itemize}
Consequently, we develop the SUPG method for the magnetic advection-diffusion problem: Find $\bm{u}_h\in \tilde{\bm{u}}_{bh} + \bm{V}_{h,0}$ such that
\begin{equation}\label{magnetic_SUPG_scheme}
    a_h(\bm{u}_h,\bm{v}_h) = F_h(\bm{v}_h),~~\forall \bm{v}_h\in\bm{V}_{h,0},
\end{equation}
where
\begin{subequations}
\begin{align}
        a_h(\bm{u}_h,\bm{v}_h) =&  a_0(\bm{u}_h,\bm{v}_h) + S_h^1(\bm{u}_h,\bm{v}_h) + S_h^2(\bm{u}_h,\bm{v}_h)\notag\\
        =& (\varepsilon\nabla\times\bm{u}_h,\nabla\times\bm{v}_h) + (\tilde{L}_{\bm{\beta},h}\bm{u}_h + \gamma\bm{u}_h,\bm{v}_h)\notag\\
        &+\sum_{T\in\mathcal{T}_h}(\tilde{\mathcal{A}}_h\bm{u}_h,\delta_T\tilde{L}_{\bm{\beta},h}\bm{v}_h)_T,\label{magnetic_SUPG_scheme_bilinear_form}\\
        F_h(\bm{v}_h)=&(\bm{f},\bm{v}_h)+\sum_{T\in\mathcal{T}_h}(\bm{f},\delta_T\tilde{L}_{\bm{\beta},h}\bm{v}_h)_T\notag\\
        &-\sum_{F\in\mathcal{F}_h^-}\langle\bm{\beta}\cdot\bm{n},\bm{g}\cdot\bm{v}_h\rangle_F -\sum_{F\in\mathcal{F}_h^-}\langle\bm{\beta}\cdot\bm{n},\bm{g}\cdot\delta_T\tilde{L}_{\bm{\beta},h}\bm{v}_h\rangle_F.
\end{align}
\end{subequations}
It is straightforward to verify the Galerkin orthogonality
\begin{equation}\label{Galerkin_orthogonality}
    a_h(\bm{u}-\bm{u}_h,\bm{v}_h) = 0,~~\forall\bm{v}_h\in\bm{V}_{h,0}.
\end{equation}
}

Numerical experiments in Section \ref{sect:numerical_experiment} demonstrate that the proposed SUPG method \eqref{magnetic_SUPG_scheme} achieves superior stability, accelerated convergence, and enhanced oscillation damping in sharp layers compared to schemes without SUPG stabilization.

\begin{remark}
While $S_h^1(\cdot,\cdot)$ is widely used in existing methods \citep{heumann2013stabilized, Luo2025LocalPS}, we propose $S_h^2(\cdot,\cdot)$ as a novel stabilization term. Specifically, $S_h^1(\cdot,\cdot)$ primarily controls the jump terms across the element interfaces, while $S_h^2(\cdot,\cdot)$ is crucial for handling the magnetic advection terms. Section \ref{sect:numerical_experiment} provides numerical experiments to further clarify the role of the two stabilization terms.

\end{remark}

\section{Stability analysis}\label{sect:stability_analysis}
The energy norm $\norm{\cdot}$ is defined as
\begin{equation}
\begin{aligned}
\norm{\bm{u}_h}^2&:=\varepsilon\Vert\nabla\times\bm{u}_h\Vert_0^2+\rho_0\Vert\bm{u}_h\Vert_0^2+\sum_{T\in\mathcal{T}_h}\delta_T\Vert \tilde{L}_{\bm{\beta},h}\bm{u}_h\Vert_{0,T}^2\\
&+\frac{1}{2}\sum_{F\in\mathcal{F}_h^{\circ}}\langle \vert[\alpha]\bm{\beta}\cdot\bm{n}\vert,\vert\llbracket\bm{u}_h\rrbracket\vert^2\rangle_F + \frac{1}{2}\sum_{F\in\mathcal{F}_h^{\partial}}\langle\vert\bm{\beta}\cdot\bm{n}\vert,\vert\bm{u}_h\vert^2\rangle_F.
\end{aligned}
\end{equation}
In this section, we establish the coercivity of the bilinear form \( a_h(\cdot,\cdot) \) with respect to the energy norm under mild assumptions. Using the standard inverse inequality \eqref{standard_inverse_inequality}, we obtain the following estimate.
\begin{corollary}[inverse inequality for $\nabla\times$ operator]
There exists a positive constant $C_{inv}$ independent of $h_T$, such that
\begin{equation}\label{FE_inverse_ineq_for_curl}
    \Vert\nabla\times(\nabla\times\bm{v}_h)\Vert_{0,T} \le C_{inv}h_T^{-1}\Vert\nabla\times\bm{v}_h\Vert_{0,T}, ~~ \forall\bm{v}_h\in\bm{V}_h, \forall T\in\mathcal{T}_h.
\end{equation}
\end{corollary}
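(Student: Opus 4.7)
The plan is to reduce the estimate directly to the scalar inverse inequality of Lemma~\ref{standard_inverse_inequality_lemma}, exploiting the fact that the curl of a piecewise polynomial vector field is again a piecewise polynomial of one degree lower. The entire argument is a two-line chain of inequalities, so there is essentially no deep obstacle; the only point to verify carefully is that $\nabla\times\bm{v}_h|_T$ is of polynomial character so that Lemma~\ref{standard_inverse_inequality_lemma} applies.

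First I would observe that since $\bm{V}_h$ is an $\bm{H}(\mathrm{curl})$-conforming finite element space (e.g., Nédélec elements), the restriction $\bm{v}_h|_T$ has components in $P_r(T)$ for some fixed $r$ independent of $h_T$. Consequently each component of $\bm{w}_h:=\nabla\times\bm{v}_h|_T$ lies in $P_{r-1}(T)$, so the scalar inverse inequality applies componentwise to $\bm{w}_h$.

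Next, because $\nabla\times$ is merely a fixed linear combination of first-order partial derivatives, there is a trivial pointwise bound $|\nabla\times\bm{w}_h|\le C|\nabla\bm{w}_h|$ with $C$ depending only on the spatial dimension. Integrating over $T$ yields
\begin{equation*}
\Vert\nabla\times(\nabla\times\bm{v}_h)\Vert_{0,T}\;\le\;C\,|\bm{w}_h|_{1,T}.
\end{equation*}
Applying Lemma~\ref{standard_inverse_inequality_lemma} componentwise to $\bm{w}_h$ then gives $|\bm{w}_h|_{1,T}\le C h_T^{-1}\Vert\bm{w}_h\Vert_{0,T}=Ch_T^{-1}\Vert\nabla\times\bm{v}_h\Vert_{0,T}$, and combining the two estimates produces the claim with $C_{inv}$ depending only on the polynomial degree $r$ and the shape regularity constant.

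The proof is therefore routine; no part of it poses a genuine difficulty, and the only reason to write it out at all is to make the constant $C_{inv}$ in \eqref{FE_inverse_ineq_for_curl} explicitly available for later use in the stability and error analysis.
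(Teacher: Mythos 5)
Your proof is correct and follows exactly the route the paper intends: the corollary is stated as an immediate consequence of the scalar inverse inequality of Lemma \ref{standard_inverse_inequality_lemma}, applied componentwise to the piecewise polynomial field $\nabla\times\bm{v}_h|_T$, with the pointwise bound of the curl by the full gradient. No gaps; the argument matches the paper's (implicit) proof.
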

The following lemma provides a stability result for scheme \eqref{magnetic_SUPG_scheme}.
\begin{lemma}[discrete coercivity]\label{discrete_coercivity}
Suppose that the weight function $\alpha$ satisfies \eqref{alpha_need} and the SUPG parameter $\delta_T$ satisfies
\begin{equation}\label{delta_T_satisfy}
    \delta_T \le \min\{\frac{h_T^2}{2C_{inv}^2\varepsilon}, \frac{\rho_0}{2\Vert\gamma\Vert_{L^{\infty}(T)}^2}\},
\end{equation}
for each $T\in\mathcal{T}_h$. Then the discrete bilinear form \eqref{magnetic_SUPG_scheme_bilinear_form} is coercive on $\bm{V}_h$ with respect to the energy norm $\norm{\cdot}$, i.e.,
\begin{equation}\label{discrete_coercivity_concreteform}
    a_h(\bm{v}_h,\bm{v}_h)\ge\frac{1}{2}\norm{\bm{v}_h}^2,~~\forall\bm{v}_h\in\bm{V}_h.
\end{equation}
\end{lemma}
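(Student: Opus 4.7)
The plan is to test the bilinear form against $\bm v_h$ itself and split the result into the curl--curl term $\varepsilon\|\nabla\times\bm v_h\|_0^2$, the advection--reaction block $(\tilde L_{\bm\beta,h}\bm v_h,\bm v_h)+(\gamma\bm v_h,\bm v_h)$, the ``good'' SUPG square $\sum_T\delta_T\|\tilde L_{\bm\beta,h}\bm v_h\|_{0,T}^2$, and the two SUPG cross terms $\sum_T(\nabla\times(\varepsilon\nabla\times\bm v_h),\delta_T\tilde L_{\bm\beta,h}\bm v_h)_T$ and $\sum_T(\gamma\bm v_h,\delta_T\tilde L_{\bm\beta,h}\bm v_h)_T$. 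The goal is to show that the first three pieces reproduce every term of $\norm{\bm v_h}^2$ exactly, while the last two are absorbed by the hypotheses on $\delta_T$.

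First I would process the advection--reaction block. Using the decomposition $\tilde L_{\bm\beta,h}\bm v_h=L_{\bm\beta,h}\bm v_h-\bm r_\alpha(\bm\beta\cdot\bm n^+\llbracket\bm v_h\rrbracket)$, I would apply the elementwise integration-by-parts identity \eqref{identities_Lie_advection_2} together with the symmetrization identity \eqref{identities_Lie_advection_1} to obtain, on each $T$, the representation $(L_{\bm\beta,h}\bm v_h,\bm v_h)_T=\tfrac12(([-\nabla\cdot\bm\beta] I+\nabla\bm\beta+(\nabla\bm\beta)^T)\bm v_h,\bm v_h)_T+\tfrac12\langle\bm\beta\cdot\bm n,\bm v_h\cdot\bm v_h\rangle_{\partial T}$. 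Summing in $T$, the volume part combines with $(\gamma\bm v_h,\bm v_h)$ and Assumption \ref{well_posedness_assumption} yields at least $\rho_0\|\bm v_h\|_0^2$. The element-boundary integrals are recast on interior facets via \eqref{identities_weighted_average_3} as $\llbracket\bm v_h\rrbracket\cdot\vavg{\bm v_h}_\alpha-\tfrac12[\alpha]|\llbracket\bm v_h\rrbracket|^2$, while the lifting identity \eqref{identity_about_lifting_1}, applied to $-\sum_T(\bm r_\alpha(\bm\beta\cdot\bm n^+\llbracket\bm v_h\rrbracket),\bm v_h)_T$, produces the negative of the corresponding $\llbracket\bm v_h\rrbracket\cdot\vavg{\bm v_h}_\alpha$ contributions on interior facets and a $-\langle\bm\beta\cdot\bm n,|\bm v_h|^2\rangle$ piece on $\Gamma^-$. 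The mixed terms therefore cancel and only $-\tfrac12\sum_{\mathcal F_h^\circ}\langle[\alpha]\bm\beta\cdot\bm n^+,|\llbracket\bm v_h\rrbracket|^2\rangle_F$ plus matching boundary traces remain; condition \eqref{alpha_need} turns the former into $\tfrac12\sum_{\mathcal F_h^\circ}\langle|[\alpha]\bm\beta\cdot\bm n|,|\llbracket\bm v_h\rrbracket|^2\rangle_F$, and a sign check on $\Gamma^\pm$ collapses the surviving boundary traces into $\tfrac12\sum_{\mathcal F_h^\partial}\langle|\bm\beta\cdot\bm n|,|\bm v_h|^2\rangle_F$, which are exactly the facet contributions to $\norm{\bm v_h}^2$.

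Next I would absorb the two SUPG cross terms by Cauchy--Schwarz and a weighted Young's inequality. The inverse inequality \eqref{FE_inverse_ineq_for_curl} gives $\|\nabla\times(\varepsilon\nabla\times\bm v_h)\|_{0,T}\le C_{inv}\varepsilon h_T^{-1}\|\nabla\times\bm v_h\|_{0,T}$, so the curl cross term is bounded by $C_{inv}\sqrt{\delta_T\varepsilon}h_T^{-1}\cdot(\sqrt\varepsilon\|\nabla\times\bm v_h\|_{0,T})(\sqrt{\delta_T}\|\tilde L_{\bm\beta,h}\bm v_h\|_{0,T})$ elementwise; the assumption $\delta_T\le h_T^2/(2C_{inv}^2\varepsilon)$ makes the leading factor $\le 1/\sqrt2$, and a Young's split yields at most $\tfrac14\varepsilon\|\nabla\times\bm v_h\|_0^2+\tfrac14\sum_T\delta_T\|\tilde L_{\bm\beta,h}\bm v_h\|_{0,T}^2$. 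The reaction cross term is handled identically, with the bound $\delta_T\le\rho_0/(2\|\gamma\|_{L^\infty(T)}^2)$ playing the analogous role and yielding $\tfrac14\rho_0\|\bm v_h\|_0^2+\tfrac14\sum_T\delta_T\|\tilde L_{\bm\beta,h}\bm v_h\|_{0,T}^2$. Adding everything, each of $\varepsilon\|\nabla\times\bm v_h\|_0^2$, $\rho_0\|\bm v_h\|_0^2$, and $\sum_T\delta_T\|\tilde L_{\bm\beta,h}\bm v_h\|_{0,T}^2$ retains at least a factor $1/2$, while the facet contributions are preserved intact, giving $a_h(\bm v_h,\bm v_h)\ge\tfrac12\norm{\bm v_h}^2$.

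The main obstacle will be the careful facet bookkeeping in the advection--reaction step. The piecewise integration by parts deposits $\tfrac12\bm\beta\cdot\bm n\,\bm v_h\cdot\bm v_h$ on every element boundary, the lifting identity contributes $-\bm\beta\cdot\bm n^+\llbracket\bm v_h\rrbracket\cdot\vavg{\bm v_h}_\alpha$ on interior facets and $-\bm\beta\cdot\bm n|\bm v_h|^2$ on $\Gamma^-$, and \eqref{identities_weighted_average_3} has to be invoked to align these into a positive quantity. The exact cancellation of the $\llbracket\cdot\rrbracket\cdot\vavg{\cdot}_\alpha$ pieces, together with the identification $-[\alpha]\bm\beta\cdot\bm n^+=|[\alpha]\bm\beta\cdot\bm n|$ supplied by \eqref{alpha_need}, forms the algebraic core of the argument and is where orientation and sign conventions must be tracked with particular care.
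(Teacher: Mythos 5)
Your proposal follows the paper's proof essentially step for step: the same splitting of $a_h(\bm{v}_h,\bm{v}_h)$ into the curl--curl term, the advection--reaction block, the SUPG square, and the two cross terms; the same treatment of the advection--reaction block via \eqref{identities_Lie_advection_2}, \eqref{identities_Lie_advection_1}, Assumption \ref{well_posedness_assumption}, the lifting identity \eqref{identity_about_lifting_1}, the weighted-average identity \eqref{identities_weighted_average_3}, and condition \eqref{alpha_need}; and the same absorption of the cross terms via \eqref{FE_inverse_ineq_for_curl} and \eqref{delta_T_satisfy}. The facet bookkeeping you outline is exactly the computation the paper performs in its estimate of $I_1$.

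The only flaw is quantitative, in the Young step. From \eqref{delta_T_satisfy} you obtain the prefactor $C_{inv}\sqrt{\varepsilon\delta_T}\,h_T^{-1}\le 1/\sqrt{2}$ (and likewise $\Vert\gamma\Vert_{L^{\infty}(T)}\sqrt{\delta_T}/\sqrt{\rho_0}\le 1/\sqrt{2}$ for the reaction cross term), but the bound $\tfrac{1}{\sqrt{2}}ab\le\tfrac14 a^2+\tfrac14 b^2$ you then invoke is false (take $a=b$); a $\tfrac14$--$\tfrac14$ split would require the prefactor to be at most $\tfrac12$, i.e.\ a stronger restriction on $\delta_T$ than \eqref{delta_T_satisfy} provides. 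The argument is repaired by the asymmetric split the paper uses, $ab\le a^2+\tfrac14 b^2$: each cross term is then bounded by $C_{inv}^2\varepsilon^2 h_T^{-2}\delta_T\Vert\nabla\times\bm{v}_h\Vert_{0,T}^2+\tfrac14\delta_T\Vert\tilde{L}_{\bm{\beta},h}\bm{v}_h\Vert_{0,T}^2\le\tfrac12\varepsilon\Vert\nabla\times\bm{v}_h\Vert_{0,T}^2+\tfrac14\delta_T\Vert\tilde{L}_{\bm{\beta},h}\bm{v}_h\Vert_{0,T}^2$, respectively $\tfrac12\rho_0\Vert\bm{v}_h\Vert_{0,T}^2+\tfrac14\delta_T\Vert\tilde{L}_{\bm{\beta},h}\bm{v}_h\Vert_{0,T}^2$, so that each of the three volume terms retains exactly the factor $\tfrac12$ and \eqref{discrete_coercivity_concreteform} follows. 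With your claimed constants the final conclusion would also have followed, but the intermediate inequality as stated does not hold under \eqref{delta_T_satisfy}.
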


\begin{proof}
By definition, we have
\begin{equation}
\begin{aligned}
    a_h(\bm{v}_h,\bm{v}_h) =& {(\varepsilon\nabla\times\bm{v}_h,\nabla\times\bm{v}_h)} + \underbrace{(\tilde{L}_{\bm{\beta},h}\bm{v}_h+\gamma\bm{v}_h,\bm{v}_h)}_{I_1}\\
    &+{\sum_{T\in\mathcal{T}_h}\delta_T(\tilde{L}_{\bm{\beta},h}\bm{v}_h,\tilde{L}_{\bm{\beta},h}\bm{v}_h)_T}\\
    &+\underbrace{\sum_{T\in\mathcal{T}_h}\delta_T(\varepsilon\nabla\times(\nabla\times\bm{v}_h)+\gamma\bm{v}_h,\tilde{L}_{\bm{\beta},h}\bm{v}_h)_T}_{I_2}.
\end{aligned}
\end{equation}
\par
\underline{Estimate of $I_1$}: We claim that
\begin{equation}\label{pos_of_conv_term}
I_1=(\tilde{L}_{\bm{\beta},h}\bm{v}_h+\gamma\bm{v}_h,\bm{v}_h)\ge\rho_0\Vert\bm{v}_h\Vert_0^2+\frac{1}{2}\sum_{F\in\mathcal{F}_h^\circ}\langle \vert[\alpha]\bm{\beta}\cdot\bm{n}\vert,\vert\llbracket \bm{v}_h\rrbracket\vert^2\rangle_F + \frac{1}{2}\sum_{F\in\mathcal{F}_h^{\partial}}\langle\vert\bm{\beta}\cdot\bm{n}\vert,\vert\bm{v}_h\vert^2\rangle_F.
\end{equation}
Taking $\bm{u}_h=\bm{v}_h$ in \eqref{identities_Lie_advection_2}, we obtain
$$
\frac{1}{2}(L_{\bm{\beta}}\bm{v}_h,\bm{v}_h)_T = \frac{1}{2}(\bm{v}_h,\mathcal{L}_{\bm{\beta}}\bm{v}_h)_T + \frac{1}{2}\langle\bm{\beta}\cdot\bm{n},\bm{v}_h\cdot\bm{v}_h\rangle_{\partial T}.
$$
Using the definition of $\tilde{L}_{\bm{\beta},h}$, the left-hand side of \eqref{pos_of_conv_term} becomes
\begin{equation}
\begin{split}
    (\tilde{L}_{\bm{\beta},h}\bm{v}_h+\gamma\bm{v}_h,\bm{v}_h)=&({L}_{\bm{\beta},h}\bm{v}_h+\gamma\bm{v}_h,\bm{v}_h)\\
    &-\sum_{F\in\mathcal{F}_h^{\circ}}\langle\bm{\beta}\cdot\bm{n}^+,\llbracket\bm{v}_h\rrbracket\cdot\vavg{\bm{v}_h}_{\alpha}\rangle_{F} - \sum_{F\in\mathcal{F}_h^{-}}\langle\bm{\beta}\cdot\bm{n},\bm{v}_h\cdot\bm{v}_h\rangle_{F}\\
    =&\left( (\frac{1}{2}L_{\bm{\beta},h}+\frac{1}{2}\mathcal{L}_{\bm{\beta},h}+\gamma)  \bm{v}_h,\bm{v}_h\right)+\frac{1}{2}\sum_{T\in\mathcal{T}_h}\langle\bm{\beta}\cdot\bm{n},\bm{v}_h\cdot\bm{v}_h\rangle_{\partial T}\\
    &-\sum_{F\in\mathcal{F}_h^{\circ}}\langle\bm{\beta}\cdot\bm{n}^+,\llbracket\bm{v}_h\rrbracket\cdot\vavg{\bm{v}_h}_{\alpha}\rangle_{F} - \sum_{F\in\mathcal{F}_h^{-}}\langle\bm{\beta}\cdot\bm{n},\bm{v}_h\cdot\bm{v}_h\rangle_{F}.
\end{split}
\end{equation}
The identity \eqref{identities_Lie_advection_1} together with assumption \eqref{well_posedness_assumption_formulation} gives
\begin{equation}\label{pos_of_conv_term 1}
\left( (\frac{1}{2}L_{\bm{\beta},h}+\frac{1}{2}\mathcal{L}_{\bm{\beta},h}+\gamma I)  \bm{v}_h,\bm{v}_h\right) = \left( \left[(\gamma-\frac{\nabla\cdot\bm{\beta}}{2})I+\frac{\nabla\bm{\beta}+(\nabla\bm{\beta})^T}{2}\right]  \bm{v}_h,\bm{v}_h\right)\ge\rho_0\Vert\bm{v}_h\Vert_0^2.
\end{equation}
For the remaining terms, we have
\begin{equation}\label{pos_of_conv_term 2}
\begin{split}
    &\frac{1}{2}\sum_{T\in\mathcal{T}_h}\langle\bm{\beta}\cdot\bm{n},\bm{v}_h\cdot\bm{v}_h\rangle_{\partial T}-\sum_{F\in\mathcal{F}_h^{\circ}}\langle\bm{\beta}\cdot\bm{n}^+,\llbracket\bm{v}_h\rrbracket\cdot\vavg{\bm{v}_h}_{\alpha}\rangle_{F} - \sum_{F\in\mathcal{F}_h^{-}}\langle\bm{\beta}\cdot\bm{n},\bm{v}_h\cdot\bm{v}_h\rangle_{F}\\
    =&\sum_{F\in\mathcal{F}_h^{\circ}}\langle\bm{\beta}\cdot\bm{n}^+,\frac{1}{2}\llbracket
    \bm{v}_h\cdot\bm{v}_h\rrbracket-\llbracket\bm{v}_h\rrbracket\cdot\vavg{\bm{v}_h}_{\alpha}\rangle_F\\
    &+\frac{1}{2}\sum_{F\in\mathcal{F}_h^{+}}\langle\bm{\beta}\cdot\bm{n},\bm{v}_h\cdot\bm{v}_h\rangle_{F} + \frac{1}{2}\sum_{F\in\mathcal{F}_h^{-}}\langle-\bm{\beta}\cdot\bm{n},\bm{v}_h\cdot\bm{v}_h\rangle_{F}\\
    =&\frac{1}{2}\sum_{F\in\mathcal{F}_h^\circ}\langle-[\alpha]\bm{\beta}\cdot\bm{n}^+,\vert\llbracket\bm{v}_h\rrbracket\vert^2\rangle_F + \frac{1}{2}\sum_{F\in\mathcal{F}_h^{\partial}}\langle\vert\bm{\beta}\cdot\bm{n}\vert,\vert\bm{v}_h\vert^2\rangle_{F}\\
    =&\frac{1}{2}\sum_{F\in\mathcal{F}_h^\circ}\langle\vert[\alpha]\bm{\beta}\cdot\bm{n}\vert,\vert\llbracket\bm{v}_h\rrbracket\vert^2\rangle_F + \frac{1}{2}\sum_{F\in\mathcal{F}_h^{\partial}}\langle\vert\bm{\beta}\cdot\bm{n}\vert,\vert\bm{v}_h\vert^2\rangle_{F},
\end{split}
\end{equation}
where the second and third equalities follow from the weighted average identity \eqref{identities_weighted_average_3} and assumption \eqref{alpha_need}, respectively. Combining \eqref{pos_of_conv_term 1} and \eqref{pos_of_conv_term 2} yields the desired estimate \eqref{pos_of_conv_term}. 
\par 
\underline{Estimate of $I_2$}: By the Cauchy--Schwarz inequality and the inverse inequality \eqref{FE_inverse_ineq_for_curl}, we obtain
\begin{equation}\label{negpart4.10}
\begin{aligned}
\vert(\varepsilon\nabla\times(\nabla\times\bm{v}_h),\delta_T\tilde{L}_{\bm{\beta},h}\bm{v}_h)_T\vert&\le\delta_T\Vert\varepsilon\nabla\times(\nabla\times\bm{v}_h)\Vert_{0,T}^2 + \frac{1}{4}\delta_T\Vert\tilde{L}_{\bm{\beta},h}\bm{v}_h\Vert_{0,T}^2  \\
&\le C^2_{inv}\delta_T\varepsilon^2h_T^{-2}\Vert\nabla\times\bm{v}_h\Vert_{0,T}^2 + \frac{1}{4}\delta_T\Vert\tilde{L}_{\bm{\beta},h}\bm{v}_h\Vert_{0,T}^2.
\end{aligned}
\end{equation}
Similarly, we have
\begin{equation}\label{negpart4.11}
\begin{aligned}
    \vert(\gamma\bm{v}_h,\delta_T\tilde{L}_{\bm{\beta},h}\bm{v}_h)_T\vert&\le \delta_T\Vert\gamma\bm{v}_h\Vert_{0,T}^2+ \frac{1}{4}\delta_T\Vert\tilde{L}_{\bm{\beta},h}\bm{v}_h\Vert_{0,T}^2\\
    &\le  \delta_T\Vert\gamma\Vert_{L^{\infty}(T)}^2\Vert\bm{v}_h\Vert_{0,T}^2+ \frac{1}{4}\delta_T\Vert\tilde{L}_{\bm{\beta},h}\bm{v}_h\Vert_{0,T}^2.
\end{aligned}
\end{equation}
Combining the estimates \eqref{pos_of_conv_term}, \eqref{negpart4.10} and \eqref{negpart4.11}, we obtain
\begin{equation}
    \begin{split}
        a_h(\bm{v}_h,\bm{v}_h) &\ge \sum_{T\in\mathcal{T}_h}(1 - C^2_{inv}\varepsilon h_T^{-2}\delta_T)\varepsilon\Vert\nabla\times\bm{v}_h\Vert_{0,T}^2 \\
        &+ \sum_{T\in\mathcal{T}_h}(\rho_0-\Vert\gamma\Vert_{L^{\infty}(T)}^2\delta_T)\Vert\bm{v}_h\Vert_{0,T}^2 +\sum_{T\in\mathcal{T}_h}\frac{1}{2}\delta_T\Vert\tilde{L}_{\bm{\beta},h}\bm{v}_h\Vert_{0,T}^2\\
        &+ \frac{1}{2}\sum_{F\in\mathcal{F}_h^{\circ}}\langle \vert[\alpha]\bm{\beta}\cdot\bm{n}\vert,\vert\llbracket\bm{v}_h\rrbracket\vert^2\rangle_F + \frac{1}{2}\sum_{F\in\mathcal{F}_h^{\partial}}\langle\vert\bm{\beta}\cdot\bm{n}\vert,\vert\bm{v}_h\vert^2\rangle_F.
    \end{split}
\end{equation}
Finally, by assumption \eqref{delta_T_satisfy}, we conclude
\begin{equation}
    a_h(\bm{v}_h,\bm{v}_h) \ge \frac{1}{2}\norm{\bm{v}_h}^2.
\end{equation}
This completes the proof.   
\end{proof}

\section{A priori error estimate}\label{sect:priori}
In this section, let $C, C_T$ denote generic positive constants independent of the local mesh size $h_T$ and diffusion coefficient $\varepsilon$, but may depend on the shape regularity constant, $\bm{\beta}$, or $\gamma$. The subscript in $C_T$ indicates that it depends on the restrictions $\bm{\beta}|_T$ and/or $\gamma|_T$, rather than on the global functions $\bm{\beta}$ or $\gamma$. Before deriving error estimates, we introduce the following assumption:
\begin{assumption}[approximation property]
There exist a positive integer $k$ and an interpolation operator $\bm{i}_h$, such that $\bm{i}_h\bm{v}\in\bm{V}_h$ and
\begin{equation}\label{the_approximation_property}
    \Vert\bm{v}-\bm{i}_h\bm{v}\Vert_{m,T}\le Ch_T^{k+1-m}\vert\bm{v}\vert_{k+1,T}, \qquad \forall\bm{v}\in(H^{k+1}(T))^3,
\end{equation}
for all integers $m$ satisfying $0\le m\le k+1$. $C$ is a constant independent of $h_T$.
\end{assumption}
This holds for a broad class of $\bm{V}_h$, including the $k$-th order Nédélec space of the second kind, in which case $\bm{i}_h$ can be taken as the canonical interpolation operator. See \citep{boffi2013mixed} for details.
\par
\begin{lemma}[approximation inequalities]
There exist constants $C_T$ independent of $h_T$ such that 
\begin{subequations}
\begin{align}
    \left(\sum_{F\in\mathcal{F}_h} \langle \vert\bm{\beta}\cdot\bm{n}\vert , \vert 
(\bm{v}-\bm{i}_h\bm{v})^{\pm} \vert^2 \rangle_F \right)^{\frac{1}{2}}&\le\left( \sum_{T\in\mathcal{T}_h}C_T h_T^{2k+1}\vert\bm{v}\vert_{k+1,T}^2 \right)^{\frac{1}{2}},\label{approximation inequality 1}\\
    \left(\sum_{T\in\mathcal{T}_h}\delta_T\Vert\tilde{L}_{\bm{\beta},h}(\bm{v}-\bm{i}_h\bm{v})\Vert_{0,T}^2\right)^{\frac{1}{2}}&\le\left( \sum_{T\in\mathcal{T}_h}C_T\delta_Th_T^{2k}\vert\bm{v}\vert_{k+1,T}^2 \right)^{\frac{1}{2}},\label{approximation inequality 2}
    \end{align}
\end{subequations}
for any $\bm{v} \in (H^{k+1}(\Omega))^3$.
\end{lemma}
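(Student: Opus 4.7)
The plan is to prove the two inequalities separately, in each case combining the standard trace inequality (Lemma \ref{standard_trace_inequality_lemma}), the $L^2$-stability of the lifting operator \eqref{lifting_operator_stability_estimate}, the approximation property \eqref{the_approximation_property} with $m=0$ and $m=1$, and the regularity $\bm{\beta}\in(W^{1,\infty}(\Omega))^3$. Set $\bm{w}:=\bm{v}-\bm{i}_h\bm{v}$ throughout.

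For \eqref{approximation inequality 1}, I would fix a facet $F\subset\partial T$ and apply the standard trace inequality componentwise to $\bm{w}$:
\[
\Vert\bm{w}\Vert_{0,F}^2\le C\Vert\bm{w}\Vert_{0,T}\bigl(h_T^{-2}\Vert\bm{w}\Vert_{0,T}^2+|\bm{w}|_{1,T}^2\bigr)^{1/2}.
\]
Plugging in $\Vert\bm{w}\Vert_{0,T}\le Ch_T^{k+1}|\bm{v}|_{k+1,T}$ and $|\bm{w}|_{1,T}\le Ch_T^k|\bm{v}|_{k+1,T}$ from \eqref{the_approximation_property} yields $\Vert\bm{w}\Vert_{0,F}^2\le Ch_T^{2k+1}|\bm{v}|_{k+1,T}^2$. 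Multiplying by $\Vert\bm{\beta}\Vert_{L^\infty(T)}$, summing over the boundedly many facets of $T$, and then over all $T\in\mathcal{T}_h$ delivers \eqref{approximation inequality 1}.

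For \eqref{approximation inequality 2}, I would decompose $\tilde{L}_{\bm{\beta},h}\bm{w}=L_{\bm{\beta}}\bm{w}-\bm{r}_{\alpha}(\bm{\beta}\cdot\bm{n}\llbracket\bm{w}\rrbracket)$ and bound the two pieces. Because $L_{\bm{\beta}}\bm{w}=-\bm{\beta}\times(\nabla\times\bm{w})+\nabla(\bm{\beta}\cdot\bm{w})$ depends linearly on $\bm{w}$ and $\nabla\bm{w}$ with coefficients in $L^\infty$, we have $\Vert L_{\bm{\beta}}\bm{w}\Vert_{0,T}\le C_T\Vert\bm{w}\Vert_{1,T}\le C_T h_T^k|\bm{v}|_{k+1,T}$. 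For the lifting piece, \eqref{lifting_operator_stability_estimate} gives
\[
\Vert\bm{r}_{\alpha}(\bm{\beta}\cdot\bm{n}\llbracket\bm{w}\rrbracket)\Vert_{0,T}\le Ch_T^{-1/2}\Vert\bm{\beta}\Vert_{L^\infty}\Vert\llbracket\bm{w}\rrbracket\Vert_{0,\partial T}.
\]
Since $\bm{v}$ is single-valued across interior facets, $|\llbracket\bm{w}\rrbracket|$ is pointwise bounded by the sum of the two traces of $\bm{w}$; applying the facet estimate from the first part to each side yields $\Vert\llbracket\bm{w}\rrbracket\Vert_{0,\partial T}^2\le Ch_T^{2k+1}\bigl(|\bm{v}|_{k+1,T}^2+\sum_{T'\sim T}|\bm{v}|_{k+1,T'}^2\bigr)$, so the lifting contribution is also of order $h_T^k|\bm{v}|_{k+1,\cdot}$.

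The one subtlety — and the main obstacle — is the reorganization of the neighbor-seminorm contributions coming from the lifting bound. After squaring, weighting by $\delta_T$, and summing over $T$, the cross terms become $\sum_{T'}\bigl(\sum_{T\sim T'}\delta_T h_T^{2k}\bigr)|\bm{v}|_{k+1,T'}^2$; shape regularity (uniformly bounding the neighbor count and giving $h_T\simeq h_{T'}$), together with the standard requirement that $\delta_T$ is comparable on neighboring elements, lets this sum be absorbed into $\sum_{T'}C_{T'}\delta_{T'}h_{T'}^{2k}|\bm{v}|_{k+1,T'}^2$, thereby producing \eqref{approximation inequality 2}.
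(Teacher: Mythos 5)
Your proposal is correct and follows essentially the same route as the paper: the first inequality via the trace inequality \eqref{standard_trace_inequality} combined with the approximation property \eqref{the_approximation_property}, and the second by splitting $\tilde{L}_{\bm{\beta},h}\bm{w}$ into the piecewise Lie derivative (bounded directly by $\Vert\bm{w}\Vert_{1,T}$) plus the lifted jump term, controlled by \eqref{lifting_operator_stability_estimate} and the facet bounds. Your explicit treatment of the neighbor contributions, using shape regularity and the comparability of $\delta_T$ across adjacent elements, is exactly the paper's patchwise argument with $\delta_{\omega(T)}\lesssim\delta_T$.
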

\begin{proof}
Define $\bm{w}:=\bm{v}-\bm{i}_h\bm{v}$. A direct calculation yields
\begin{equation}
\begin{aligned}
    &\sum_{F\in\mathcal{F}_h} \langle \vert\bm{\beta}\cdot\bm{n}\vert , \vert 
\bm{w}^{\pm} \vert^2 \rangle_F \le \sum_{F\in\mathcal{F}_h} \langle \vert\bm{\beta}\cdot\bm{n}\vert , \vert 
\bm{w}^{+} \vert^2 + \vert 
\bm{w}^{-} \vert^2 \rangle_F
\le 2\sum_{T\in\mathcal{T}_h}\langle\vert\bm{\beta}\cdot\bm{n}\vert , 
\vert\bm{w}\vert^2 \rangle_{\partial T}.
\end{aligned}
\end{equation}
The trace inequality \eqref{standard_trace_inequality} and the approximation property \eqref{the_approximation_property} yield
\begin{equation}\label{approximation inequality middle result}
\Vert\bm{w}\Vert_{0,\partial T}^2 \le C\Vert\bm{w}\Vert_{0,T}(h_T^{-2}\Vert\bm{w}\Vert_{0,T}^2+\vert\bm{w}\vert_{1,T}^2)^{1/2} \le Ch_T^{2k+1}\vert\bm{v}\vert_{k+1,T}^2.
\end{equation}
Consequently,
\begin{equation}
    \sum_{F\in\mathcal{F}_h} \langle \vert\bm{\beta}\cdot\bm{n}\vert , \vert 
\bm{w}^{\pm} \vert^2 \rangle_F\le \sum_{T\in\mathcal{T}_h}C_T\Vert\bm{w}\Vert_{0,\partial T}^2\le \sum_{T\in\mathcal{T}_h}C_Th_T^{2k+1}\vert\bm{v}\vert_{k+1,T}^2,
\end{equation}
thus establishing \eqref{approximation inequality 1}.

\par
Define $\omega(T)=\{T^\prime\in\mathcal{T}_h | T^{\prime}\cap T\neq\varnothing\}$ as the patch of $T$, and let $\delta_{\omega(T)}=\max_{T^{\prime}\in\omega(T)}\delta_{T^{\prime}}$. We make a mild assumption that $\delta_{\omega(T)}\lesssim\delta_T$, which is consistent with the common choice of $\delta_T$. Inequalities \eqref{lifting_operator_stability_estimate} and \eqref{approximation inequality middle result} imply
\begin{equation}\label{approx_property_lifting}
\begin{aligned}
    &\left(\sum_{T\in\mathcal{T}_h}\delta_T\Vert\bm{r}_{\alpha}(\bm{\beta}\cdot\bm{n^+\llbracket\bm{w}\rrbracket})\Vert_{0,T}^2\right)^{\frac{1}{2}} \le \left(\sum_{T\in\mathcal{T}_h}C_T\delta_Th_T^{-1}\Vert \bm{\beta}\cdot\bm{n}^+\llbracket\bm{w}\rrbracket \Vert_{0,\partial T}^2\right)^{\frac{1}{2}} \\
    \le&\left(\sum_{T\in\mathcal{T}_h}C_T\delta_{\omega(T)}h_T^{-1}\Vert\bm{w}\Vert_{0,\partial T}^2\right)^{\frac{1}{2}}\le\left(\sum_{T\in\mathcal{T}_h}C_T\delta_Th_T^{2k}\vert\bm{v}\vert_{k+1,T}^2\right)^{\frac{1}{2}},
\end{aligned}
\end{equation}
where the second inequality follows from the shape-regularity of $\mathcal{T}_h$. From \eqref{the_approximation_property}, we immediately derive
\begin{equation}\label{approx_property_Lbeta}
    \left(\sum_{T\in\mathcal{T}_h}\delta_T\Vert{L}_{\bm{\beta},h}\bm{w}\Vert_{0,T}^2\right)^{\frac{1}{2}}\le\left( 
\sum_{T\in\mathcal{T}_h}C_T\delta_Th_T^{2k}\vert\bm{v}\vert_{k+1,T}^2 \right)^{\frac{1}{2}}.
\end{equation}
The proof of \eqref{approximation inequality 2} follows from combining \eqref{approx_property_lifting} with \eqref{approx_property_Lbeta}.
\end{proof}
We now present the following error estimate.
\begin{theorem}[error estimate]
Let $\bm{u}\in(H^{k+1}(\Omega))^3$ be the exact solution of \eqref{curl_advection_diffusion_equation}, and $\bm{u}_h$ the solution of the discrete problem \eqref{magnetic_SUPG_scheme}. Under the same assumptions as Lemma \ref{discrete_coercivity}, there exist constants $C_T>0$ independent of $h_T$ and $\varepsilon$ such that
\begin{equation}
    \norm{\bm{u}-\bm{u}_h}\le\left(\sum_{T\in\mathcal{T}_h} C_T(\varepsilon+\frac{h_T^2}{\delta_T}+h_T+\frac{\varepsilon^2}{h_T^2}\delta_T+\delta_T)h_T^{2k} \vert\bm{u}\vert_{k+1,T}^2\right)^{\frac{1}{2}}.
\end{equation}
\end{theorem}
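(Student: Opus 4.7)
The plan is a standard SUPG error analysis based on the splitting $\bm{u}-\bm{u}_h=(\bm{u}-\bm{i}_h\bm{u})+(\bm{i}_h\bm{u}-\bm{u}_h)=:\bm{\eta}+\bm{\xi}_h$ with $\bm{\xi}_h\in\bm{V}_{h,0}$. By the triangle inequality it suffices to control $\norm{\bm{\eta}}$ and $\norm{\bm{\xi}_h}$ separately. The interpolation part $\norm{\bm{\eta}}$ is direct: expanding the energy norm term by term and invoking \eqref{the_approximation_property} together with the approximation inequalities \eqref{approximation inequality 1}--\eqref{approximation inequality 2} immediately produces contributions of order $\varepsilon h_T^{2k}$ from the curl term, $h_T^{2k+2}$ from the reaction term (absorbed into $h_T\cdot h_T^{2k}$), $\delta_T h_T^{2k}$ from the SUPG residual seminorm, and $h_T \cdot h_T^{2k}$ from the facet jump/boundary integrals.

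For the discrete error $\bm{\xi}_h$, I combine discrete coercivity (Lemma \ref{discrete_coercivity}) with the Galerkin orthogonality \eqref{Galerkin_orthogonality} to obtain
\[
\tfrac{1}{2}\norm{\bm{\xi}_h}^2 \le a_h(\bm{\xi}_h,\bm{\xi}_h) = -a_h(\bm{\eta},\bm{\xi}_h),
\]
so the task reduces to bounding $|a_h(\bm{\eta},\bm{\xi}_h)|$ by a product of an approximation-type quantity with $\norm{\bm{\xi}_h}$, then absorbing $\norm{\bm{\xi}_h}$ via Young's inequality. I treat the contributions of $a_h$ separately: the diffusive term $(\varepsilon\nabla\times\bm{\eta},\nabla\times\bm{\xi}_h)$ yields the $\varepsilon$-piece, the reaction term $(\gamma\bm{\eta},\bm{\xi}_h)$ is lower order, and the SUPG residual $\sum_T(\tilde{\mathcal{A}}_h\bm{\eta},\delta_T\tilde{L}_{\bm{\beta},h}\bm{\xi}_h)_T$ is handled by a weighted Cauchy--Schwarz that absorbs $\delta_T^{1/2}\|\tilde{L}_{\bm{\beta},h}\bm{\xi}_h\|_{0,T}$ into $\norm{\bm{\xi}_h}$; splitting $\tilde{\mathcal{A}}_h\bm{\eta}$ into its three summands and estimating the curl-curl piece by the inverse inequality \eqref{FE_inverse_ineq_for_curl} combined with the $m=2$ case of \eqref{the_approximation_property} (whence the implicit requirement $k\ge 1$) produces the $\frac{\varepsilon^2}{h_T^2}\delta_T$ and $\delta_T$ contributions.

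The delicate piece is the discrete advection $(\tilde{L}_{\bm{\beta},h}\bm{\eta},\bm{\xi}_h)$: a naive Cauchy--Schwarz only yields $h_T^{2k}$, losing a factor of $h_T$ compared to the target. The remedy is elementwise integration by parts via \eqref{identities_Lie_advection_2} and the lifting identity \eqref{identity_about_lifting_1}, which moves the advection off of $\bm{\eta}$ onto $\bm{\xi}_h$ and converts it into $\tilde{L}_{\bm{\beta},h}\bm{\xi}_h$ modulo a reaction-type remainder controlled by \eqref{identities_Lie_advection_1}. I then use the weighted Cauchy--Schwarz
\[
|(\bm{\eta},\tilde{L}_{\bm{\beta},h}\bm{\xi}_h)_T| \le \delta_T^{-1}\|\bm{\eta}\|_{0,T}^2 + \tfrac{1}{4}\delta_T\|\tilde{L}_{\bm{\beta},h}\bm{\xi}_h\|_{0,T}^2,
\]
which together with $\|\bm{\eta}\|_{0,T}\le Ch_T^{k+1}|\bm{u}|_{k+1,T}$ produces the $\frac{h_T^2}{\delta_T}$ contribution. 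The interelement boundary residues of the integration by parts are rewritten using \eqref{identities_weighted_average_2}, split into $\llbracket\bm{\xi}_h\rrbracket$-type and $\vavg{\bm{\xi}_h}_{1-\alpha}$-type terms, and controlled by Cauchy--Schwarz against the upwind jump seminorm present in $\norm{\bm{\xi}_h}$ thanks to \eqref{alpha_need}; the $\bm{\eta}$-factors on facets are bounded by \eqref{approximation inequality 1}, generating the $h_T$ contribution.

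Combining everything, Young's inequality absorbs $\norm{\bm{\xi}_h}$ into the left-hand side of the coercivity bound and produces the stated estimate on $\norm{\bm{\xi}_h}$, which together with the direct estimate of $\norm{\bm{\eta}}$ completes the proof. I expect the main obstacle to be the discrete advection term: identifying the correct integration-by-parts formula against the modified operator $\tilde{L}_{\bm{\beta},h}$, choosing the right weighted Cauchy--Schwarz to synthesize $h_T^2/\delta_T$, and tracking the facet remainders so that they match precisely the jump seminorms already built into $\norm{\cdot}$; every other piece then follows routinely from Lemmas and inequalities already established in the excerpt.
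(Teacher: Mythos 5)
Your proposal is correct and follows essentially the same route as the paper: triangle inequality with the interpolant, coercivity plus Galerkin orthogonality for the discrete part, weighted Cauchy--Schwarz on the SUPG residual term, and the same integration-by-parts/lifting/weighted-average treatment of the discrete advection term yielding the $h_T^2/\delta_T$ and $h_T$ contributions against the jump seminorm via \eqref{alpha_need}. The only small inaccuracy is invoking the inverse inequality \eqref{FE_inverse_ineq_for_curl} for the curl-curl part of $\tilde{\mathcal{A}}_h(\bm{u}-\bm{i}_h\bm{u})$, which is not a finite element function; as in the paper, the $m=2$ case of \eqref{the_approximation_property} alone gives $\delta_T\varepsilon^2 h_T^{2k-2}$, which (using $\varepsilon\delta_T\lesssim h_T^2$) is absorbed into the stated bound.
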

\begin{proof}  
By the triangle inequality, the total energy error decomposes into an interpolation error and a projection error:
\begin{equation}
    \norm{\bm{u}-\bm{u}_h}\le\norm{\bm{u}-\bm{i}_h\bm{u}}+\norm{\bm{i}_h\bm{u}-\bm{u}_h}.
\end{equation}
For the interpolation error, the approximation properties \eqref{the_approximation_property}, \eqref{approximation inequality 1}, and \eqref{approximation inequality 2} give
 \begin{equation}\label{estimate for the energy norm of the interpolation error}
 \begin{aligned}
    &\norm{\bm{u}-\bm{i}_h\bm{u}}^2 =  \varepsilon\Vert\nabla\times(\bm{u}-\bm{i}_h\bm{u})\Vert_0^2 + \rho_0\Vert\bm{u}-\bm{i}_h\bm{u}\Vert_0^2 + \sum_{T\in\mathcal{T}_h}\delta_T\Vert\tilde{L}_{\bm{\beta},h}(\bm{u}-\bm{i}_h\bm{u})\Vert_{0,T}^2  \\
    &+\frac{1}{2}\sum_{F\in\mathcal{F}_h^\circ}\langle\vert[\alpha]\bm{\beta}\cdot\bm{n}\vert,\vert\llbracket\bm{u}-\bm{i}_h\bm{u}\rrbracket\vert^2\rangle_F +\frac{1}{2}\sum_{F\in\mathcal{F}_h^\partial}\langle\vert\bm{\beta}\cdot\bm{n}\vert,\vert\bm{u}-\bm{i}_h\bm{u}\vert^2\rangle_F \\
    \le& \sum_{T\in\mathcal{T}_h} C_T(\varepsilon+h_T^2+\delta_T+h_T)h_T^{2k}\vert\bm{u}\vert_{k+1,T}^2 \le\sum_{T\in\mathcal{T}_h} C_T(\varepsilon+h_T+\delta_T)h_T^{2k}\vert\bm{u}\vert_{k+1,T}^2.
\end{aligned}
\end{equation}
For the projection error, applying the discrete coercivity \eqref{discrete_coercivity_concreteform} and Galerkin orthogonality \eqref{Galerkin_orthogonality}, we obtain
\begin{equation}
\frac{1}{2}\norm{\bm{i}_h\bm{u}-\bm{u}_h}^2 \le a_h(\bm{i}_h\bm{u}-\bm{u}_h,\bm{i}_h\bm{u}-\bm{u}_h) = a_h(\bm{i}_h\bm{u}-\bm{u},\bm{i}_h\bm{u}-\bm{u}_h).
\end{equation}
The right-hand side can be written as six parts:
$$
a_h(\bm{i}_h\bm{u}-\bm{u},\bm{i}_h\bm{u}-\bm{u}_h) := \sum_{i=1}^6M_i ,
$$
where
\begin{equation}
    \begin{split}
        & M_1 = (\varepsilon\nabla\times(\bm{i}_h\bm{u}-\bm{u}),\nabla\times(\bm{i}_h\bm{u}-\bm{u}_h)), \quad M_2 = (\tilde{L}_{\bm{\beta},h}(\bm{i}_h\bm{u}-\bm{u}),\bm{i}_h\bm{u}-\bm{u}_h),\\
        &M_3 = (\gamma(\bm{i}_h\bm{u}-\bm{u}),\bm{i}_h\bm{u}-\bm{u}_h),\quad M_4 = \sum_{T\in\mathcal{T}_h}(\varepsilon\nabla\times(\nabla\times(\bm{i}_h\bm{u}-\bm{u})),
        \delta_T\tilde{L}_{\bm{\beta},h}(\bm{i}_h\bm{u}-\bm{u}_h))_T, \\
        &M_5 = \sum_{T\in\mathcal{T}_h}(\tilde{L}_{\bm{\beta},h}(\bm{i}_h\bm{u}-\bm{u}),
        \delta_T\tilde{L}_{\bm{\beta},h}(\bm{i}_h\bm{u}-\bm{u}_h))_T, M_6 = \sum_{T\in\mathcal{T}_h}(\gamma(\bm{i}_h\bm{u}-\bm{u}),
        \delta_T\tilde{L}_{\bm{\beta},h}(\bm{i}_h\bm{u}-\bm{u}_h))_T.
    \end{split}
\end{equation}
These terms are estimated separately in the following.
\par
\underline{Estimate of $M_1$}: By the Cauchy-Schwarz inequality and the approximation property \eqref{the_approximation_property},
\begin{equation}\label{estimateofM1}
\begin{split}
    \vert M_1\vert =& \vert(\varepsilon\nabla\times(\bm{i}_h\bm{u}-\bm{u}),\nabla\times(\bm{i}_h\bm{u}-\bm{u}_h))\vert \\
    \le& \left(\sum_{T\in\mathcal{T}_h}\Vert\sqrt{\varepsilon}\nabla\times(\bm{i}_h\bm{u}-\bm{u})\Vert_{0,T}^2\right)^{\frac{1}{2}}\left(\sum_{T\in\mathcal{T}_h}\Vert\sqrt{\varepsilon}\nabla\times(\bm{i}_h\bm{u}-\bm{u}_h)\Vert_{0,T}^2\right)^{\frac{1}{2}}\\
    \le& \left(\sum_{T\in\mathcal{T}_h}C_T\varepsilon h_T^{2k}\right)^{\frac{1}{2}}\norm{\bm{i}_h\bm{u}-\bm{u}_h}.
\end{split}
\end{equation}\par
\underline{Estimate of $M_2$}: By the definition of $\tilde{L}_{\bm{\beta},h}$ and the integration by parts formula \eqref{identities_Lie_advection_2}, we derive
\begin{equation}
\begin{aligned}
M_2=&(\tilde{L}_{\bm{\beta},h}(\bm{i}_h\bm{u}-\bm{u}),\bm{i}_h\bm{u}-\bm{u}_h) = (L_{\bm{\beta},h}(\bm{i}_h\bm{u}-\bm{u}),\bm{i}_h\bm{u}-\bm{u}_h) \\
&-\sum_{F\in\mathcal{F}_h^{\circ}\cup\mathcal{F}_h^-}\langle \bm{\beta}\cdot\bm{n}^+,\llbracket(\bm{i}_h\bm{u}-\bm{u})\rrbracket\cdot\vavg{\bm{i}_h\bm{u}-\bm{u}_h}_{\alpha}\rangle_F\\
=&\underbrace{(\bm{i}_h\bm{u}-\bm{u},\mathcal{L}_{\bm{\beta},h}(\bm{i}_h\bm{u}-\bm{u}_h))}_{M_{2,1}}  \\
&+\underbrace{\sum_{T\in\mathcal{T}_h}\langle\bm{\beta}\cdot\bm{n},(\bm{i}_h\bm{u}-\bm{u})\cdot(\bm{i}_h\bm{u}-\bm{u}_h)\rangle_{\partial T} - \sum_{F\in\mathcal{F}_h^{\circ}\cup\mathcal{F}_h^-}\langle \bm{\beta}\cdot\bm{n}^+,\llbracket\bm{i}_h\bm{u}-\bm{u}\rrbracket\cdot\vavg{\bm{i}_h\bm{u}-\bm{u}_h}_{\alpha}\rangle_F}_{M_{2,2}}.
\end{aligned}
\end{equation}
From the identity \eqref{identities_Lie_advection_1}, we obtain
\begin{equation}
\begin{aligned}
M_{2,1} =& -(\bm{i}_h\bm{u}-\bm{u},L_{\bm{\beta},h}(\bm{i}_h\bm{u}-\bm{u}_h))+(\bm{i}_h\bm{u}-\bm{u},L_{\bm{\beta},h}(\bm{i}_h\bm{u}-\bm{u}_h)+\mathcal{L}_{\bm{\beta},h}(\bm{i}_h\bm{u}-\bm{u}_h))\\
=&\underbrace{-(\bm{i}_h\bm{u}-\bm{u},\tilde{L}_{\bm{\beta},h}(\bm{i}_h\bm{u}-\bm{u}_h))}_{m_1}\underbrace{-(\bm{i}_h\bm{u}-\bm{u},\bm{r}_{\alpha}(\bm{\beta}\cdot\bm{n}^+\llbracket\bm{i}_h\bm{u}-\bm{u}_h\rrbracket))}_{m_2} \\
&+\underbrace{(\bm{i}_h\bm{u}-\bm{u},(-(\nabla\cdot\bm{\beta})I+[\nabla\bm{\beta}+(\nabla\bm{\beta})^T])(\bm{i}_h\bm{u}-\bm{u}_h))}_{m_3}.
\end{aligned}
\end{equation}
The weighted average identity \eqref{identities_weighted_average_2} implies
\begin{equation}
\begin{aligned}
M_{2,2}=&\sum_{F\in\mathcal{F}_h^\circ}\langle\bm{\beta}\cdot\bm{n}^+, 
 \llbracket(\bm{i}_h\bm{u}-\bm{u})\cdot(\bm{i}_h\bm{u}-\bm{u}_h)\rrbracket - \llbracket\bm{i}_h\bm{u}-\bm{u}\rrbracket\cdot\vavg{\bm{i}_h\bm{u}-\bm{u}_h}_{\alpha} \rangle_F\\
 &+\sum_{F\in\mathcal{F}_h^+}\langle\bm{\beta}\cdot\bm{n},(\bm{i}_h\bm{u}-\bm{u})\cdot(\bm{i}_h\bm{u}-\bm{u}_h)\rangle_F\\
=&\underbrace{\sum_{F\in\mathcal{F}_h^\circ}\langle\bm{\beta}\cdot\bm{n}^+,\vavg{\bm{i}_h\bm{u}-\bm{u}}_{1-\alpha}\cdot\llbracket\bm{i}_h\bm{u}-\bm{u}_h\rrbracket\rangle_F}_{m_4}+\underbrace{\sum_{F\in\mathcal{F}_h^+}\langle\bm{\beta}\cdot\bm{n},(\bm{i}_h\bm{u}-\bm{u})\cdot(\bm{i}_h\bm{u}-\bm{u}_h)\rangle_F}_{m_5}.
\end{aligned}
\end{equation}
The estimate of $M_2$ requires estimating $m_i$ ($i=1,2,\cdots,6$). The approximation property \eqref{the_approximation_property} yields
\begin{equation}\label{estimateofm1}
\begin{split}
    m_1=&-(\bm{i}_h\bm{u}-\bm{u},\tilde{L}_{\bm{\beta},h}(\bm{i}_h\bm{u}-\bm{u}_h))\\
    \le&\left(\sum_{T\in\mathcal{T}_h}  \delta_T^{-1}\Vert\bm{i}_h\bm{u}-\bm{u}\Vert_{0,T}^2  \right)^{\frac{1}{2}}\left(\sum_{T\in\mathcal{T}_h} 
\delta_T\Vert\tilde{L}_{\bm{\beta},h}(\bm{i}_h\bm{u}-\bm{u}_h)\Vert_{0,T}^2   \right)^{\frac{1}{2}}\\
\le& \left(\sum_{T\in\mathcal{T}_h} C_T \delta_T^{-1}h_T^{2k+2}\vert\bm{u}\vert_{k+1,T}^2  \right)^{\frac{1}{2}}\norm{\bm{i}_h\bm{u}-\bm{u}_h}.
\end{split}
\end{equation}
Recalling the requirement \eqref{alpha_need} for weight $\alpha$, we deduce
\begin{equation}\label{energy_norm_greater_than_interior_jump}
\begin{split}
    \norm{\bm{i}_h\bm{u}-\bm{u}_h}^2\ge\frac{1}{2}\sum_{F\in\mathcal{F}_h^{\circ}}\langle\vert[\alpha]\bm{\beta}\cdot\bm{n}\vert,\vert\llbracket\bm{i}_h\bm{u}-\bm{u}_h\rrbracket\vert^2\rangle_F\ge\frac{1}{2}\sum_{F\in\mathcal{F}_h^{\circ}}\langle C_{up}\vert\bm{\beta}\cdot\bm{n}\vert,\vert\llbracket\bm{i}_h\bm{u}-\bm{u}_h\rrbracket\vert^2\rangle_F,
\end{split}
\end{equation}
where $C_{up}$ is a positive constant. Then the 
estimate for the lifting operator \eqref{lifting_operator_stability_estimate} and the estimate \eqref{energy_norm_greater_than_interior_jump} imply
\begin{equation}\label{estimateofm2}
\begin{split}
    m_2 =& -(\bm{i}_h\bm{u}-\bm{u},\bm{r}_{\alpha}(\bm{\beta}\cdot\bm{n}^+\llbracket\bm{i}_h\bm{u}-\bm{u}_h\rrbracket))\\
    \le&\left( \sum_{T\in\mathcal{T}_h} h_T^{-1}\Vert(\bm{i}_h\bm{u}-\bm{u})\Vert_{0,T}^2 \right)^{\frac{1}{2}} \left( \sum_{T\in\mathcal{T}_h  } h_T\Vert\bm{r}_{\alpha}(\bm{\beta}\cdot\bm{n}^+\llbracket\bm{i}_h\bm{u}-\bm{u}_h\rrbracket)\Vert_{0,T}^2 \right)^{\frac{1}{2}}\\
    \le&\left(\sum_{T\in\mathcal{T}_h}C_Th_T^{2k+1}  \right)^{\frac{1}{2}}\left(C_{up}^{-1}\sum_{F\in\mathcal{F}_h^{\circ}}\langle C_{up}\vert\bm{\beta}\cdot\bm{n}\vert,\vert\llbracket\bm{i}_h\bm{u}-\bm{u}_h\rrbracket\vert^2 \rangle_{F} + \sum_{F\in\mathcal{F}_h^{\partial}}\langle \vert\bm{\beta}\cdot\bm{n}\vert,\vert\bm{i}_h\bm{u}-\bm{u}_h\vert^2 \rangle_{F} \right)^{\frac{1}{2}}\\
    \le&\left(\sum_{T\in\mathcal{T}_h}C_Th_T^{2k+1}  \right)^{\frac{1}{2}}\norm{\bm{i}_h\bm{u}-\bm{u}_h}.
\end{split}
\end{equation}
Given that $L_{\bm{\beta}}(\bm{i}_h\bm{u}-\bm{u}_h)+\mathcal{L}_{\bm{\beta}}(\bm{i}_h\bm{u}-\bm{u}_h)$ contains no derivative of $\bm{i}_h\bm{u}-\bm{u}_h$, it follows that
\begin{equation}\label{estimateofm3}
\begin{split}
    m_3 =&\left((-(\nabla\cdot\bm{\beta})I+[\nabla\bm{\beta}+(\nabla\bm{\beta})^T])(\bm{i}_h\bm{u}-\bm{u}),\bm{i}_h\bm{u}-\bm{u}_h\right)\\
    \le& \left(\sum_{T\in\mathcal{T}_h}C_T\rho_0^{-1}\vert\bm{\beta}\vert^2_{1,\infty,T}\Vert\bm{i}_h\bm{u}-\bm{u}\Vert_{0,T}^2  \right)^{\frac{1}{2}} \left(\sum_{T\in\mathcal{T}_h}\rho_0\Vert\bm{i}_h\bm{u}-\bm{u}_h\Vert_{0,T}^2  \right)^{\frac{1}{2}}\\
    \le&\left(\sum_{T\in\mathcal{T}_h}C_Th_T^{2k+2}  \right)^{\frac{1}{2}} \norm{\bm{i}_h\bm{u}-\bm{u}_h}.
\end{split}
\end{equation}
From the approximation inequality \eqref{approximation inequality 1} and the estimate \eqref{energy_norm_greater_than_interior_jump}, we establish
\begin{equation}\label{estimateofm4}
\begin{split}
    m_4 =& \sum_{F\in\mathcal{F}_h^{\circ}}\langle\bm{\beta}\cdot\bm{n}^+,\vavg{\bm{i}_h\bm{u}-\bm{u}}_{1-\alpha}\cdot\llbracket\bm{i}_h\bm{u}-\bm{u}_h\rrbracket   \rangle_F\\
    \le&\left(  \sum_{F\in\mathcal{F}_h^{\circ}}\langle\vert C_{up}^{-1}\bm{\beta}\cdot\bm{n}\vert,\vert\vavg{\bm{i}_h\bm{u}-\bm{u}}_{1-\alpha}\vert^2\rangle_F  \right)^{\frac{1}{2}}\left(  \sum_{F\in\mathcal{F}_h^{\circ}} \langle \vert C_{up}\bm{\beta}\cdot\bm{n}\vert,\vert\llbracket\bm{i}_h\bm{u}-\bm{u}_h\rrbracket\vert^2\rangle_F \right)^{\frac{1}{2}}    \\
    \le&\left( \sum_{T\in\mathcal{T}_h}C_T h_T^{2k+1}\vert\bm{u}\vert_{k+1,T}^2 \right)^{\frac{1}{2}}\norm{\bm{i}_h\bm{u}-\bm{u}_h}.
\end{split}
\end{equation}
$m_5$ is estimated analogously as follows:
\begin{equation}\label{estimateofm5}
\begin{split}
    m_5 =&  \sum_{F\in\mathcal{F}_h^+}\langle\bm{\beta}\cdot\bm{n},(\bm{i}_h\bm{u}-\bm{u})\cdot(\bm{i}_h\bm{u}-\bm{u}_h)\rangle_F   \\
        \le& \left( \sum_{F\in\mathcal{F}_h^{+}} \langle\vert\bm{\beta}\cdot\bm{n}\vert,\vert\bm{i}_h\bm{u}-\bm{u}\vert^2\rangle_F  \right)^{\frac{1}{2}}\left( \sum_{F\in\mathcal{F}_h^{+}} \langle\vert\bm{\beta}\cdot\bm{n}\vert,\vert\bm{i}_h\bm{u}-\bm{u}_h\vert^2\rangle_F  \right)^{\frac{1}{2}}\\
        \le&\left( \sum_{T\in\mathcal{T}_h} C_T h_T^{2k+1}\vert\bm{u}\vert_{k+1,T}^2 \right)^{\frac{1}{2}}\norm{\bm{i}_h\bm{u}-\bm{u}_h}.
\end{split}
\end{equation}
Combining \eqref{estimateofm1}, \eqref{estimateofm2}, \eqref{estimateofm3}, \eqref{estimateofm4}, and \eqref{estimateofm5}, we obtain the estimate for $M_2$:
\begin{equation}\label{estimateofM2}
\begin{split}
    \vert M_2\vert\le\left(\sum_{T\in\mathcal{T}_h}C_T( \delta_T^{-1}h_T^{2}+h_T^2+h_T
 )h_T^{2k}\lvert\bm{u}\rvert_{k+1,T}^2\right)^{\frac{1}{2}}  \norm{\bm{i}_h\bm{u}-\bm{u}_h}.
\end{split}
\end{equation}
\par
\underline{Estimate of $M_3$}: The approximation property \eqref{the_approximation_property} combined with the Cauchy-Schwarz inequality implies
\begin{equation}\label{estimateofM3}
\begin{split}
     M_3 =& (\gamma(\bm{i}_h\bm{u}-\bm{u}),\bm{i}_h\bm{u}-\bm{u}_h) \le\left(\sum_{T\in\mathcal{T}_h}\rho_0^{-1}\Vert\gamma\Vert^2_{0,\infty,T}\Vert\bm{i}_h\bm{u}-\bm{u}\Vert_{0,T}^2  \right)^{\frac{1}{2}} \left(\sum_{T\in\mathcal{T}_h}\rho_0\Vert\bm{i}_h\bm{u}-\bm{u}_h\Vert_{0,T}^2  \right)^{\frac{1}{2}}\\
    \le&\left(\sum_{T\in\mathcal{T}_h}C_Th_T^{2k+2}  \right)^{\frac{1}{2}} \norm{\bm{i}_h\bm{u}-\bm{u}_h}.
\end{split}
\end{equation}\par
\underline{Estimate of $M_4+M_5+M_6$}: The approximation property \eqref{the_approximation_property} directly yields:
\begin{equation}\label{estimateofM456}
\begin{split}
    &M_4+M_5+M_6 \\
    \le&\left(\sum_{T\in\mathcal{T}_h}\delta_T (\varepsilon^2\Vert\nabla\times(\nabla\times(\bm{i}_h\bm{u}-\bm{u}))\Vert_{0,T}^2 + \Vert\tilde{L}_{\bm{\beta},h}(\bm{i}_h\bm{u}-\bm{u})\Vert_{0,T}^2 + \Vert\gamma(\bm{i}_h\bm{u}-\bm{u})\Vert_{0,T}^2)            \right)^{\frac{1}{2}} \\
    &\cdot\left( \sum_{T\in\mathcal{T}_h} \delta_T\Vert\tilde{L}_{\bm{\beta},h}(\bm{i}_h\bm{u}-\bm{u}_h)\Vert_{0,T}^2 \right)^{\frac{1}{2}}\\
    \le&\left(\sum_{T\in\mathcal{T}_h} C_T\delta_T(\varepsilon^2h_T^{2k-2}+h_T^{2k} + h_T^{2k+2})\vert\bm{u}\vert_{k+1,T}^2 \right)^{\frac{1}{2}}\norm{\bm{i}_h\bm{u}-\bm{u}_h} \\
    \le&\left(\sum_{T\in\mathcal{T}_h} C_T(\varepsilon+\delta_T)h_T^{2k}\vert\bm{u}\vert_{k+1,T}^2 \right)^{\frac{1}{2}}\norm{\bm{i}_h\bm{u}-\bm{u}_h}.
\end{split}
\end{equation}
The last step employs the assumed bound $\varepsilon\delta_T \le Ch_T^2$ from Lemma \ref{discrete_coercivity}.
\par
Collecting \eqref{estimateofM1}, \eqref{estimateofM2}, \eqref{estimateofM3} and \eqref{estimateofM456}, we obtain
\begin{equation}\label{before pihu-uh_estimate}
\begin{split}
    &\frac{1}{2}\norm{\bm{i}_h\bm{u}-\bm{u}_h}^2 \le a_h((\bm{i}_h\bm{u}-\bm{u}),(\bm{i}_h\bm{u}-\bm{u}_h))=\sum_{i=1}^6M_i\\
    \le&\left(\sum_{T\in\mathcal{T}_h} C_T(\varepsilon+\frac{h_T^2}{\delta_T}+h_T+\delta_T)h_T^{2k} \vert\bm{u}\vert_{k+1,T}^2\right)^{\frac{1}{2}}\norm{\bm{i}_h\bm{u}-\bm{u}_h}.
\end{split}
\end{equation}
Cancellation of $\norm{\bm{i}_h\bm{u}-\bm{u}_h}$ from \eqref{before pihu-uh_estimate} yields:
\begin{equation}\label{pihu-uh_estimate}
    \norm{\bm{i}_h\bm{u}-\bm{u}_h}\le\left(\sum_{T\in\mathcal{T}_h} C_T(\varepsilon+\frac{h_T^2}{\delta_T}+h_T+\delta_T)h_T^{2k}\vert\bm{u}\vert_{k+1,T}^2 \right)^{\frac{1}{2}}.
\end{equation}
Combining the projection error estimate \eqref{pihu-uh_estimate} and the interpolation error estimate \eqref{estimate for the energy norm of the interpolation error}, we arrive at
\begin{equation}
    \norm{\bm{u}-\bm{u}_h}\le\left(\sum_{T\in\mathcal{T}_h} C_T(\varepsilon+\frac{h_T^2}{\delta_T}+h_T+\delta_T)h_T^{2k}\vert\bm{u}\vert_{k+1,T}^2 \right)^{\frac{1}{2}}.
\end{equation}
This completes the proof.
\end{proof}
\begin{remark}
The above estimate indicates that a balance among $\varepsilon, \delta_T$, and $\delta_T^{-1}h_T^2$ is required, which suggests the following choice for $\delta_T$:
\begin{equation}
\delta_T = \left\{\begin{aligned}
O(h_T),~~&\text{if } \frac{\Vert\bm{\beta}\Vert_{0,\infty,T}h_T}{2\varepsilon}>1 ~( \text{advection-dominated case} ),\\
O(h_T^2/\varepsilon),~~&\text{if } \frac{\Vert\bm{\beta}\Vert_{0,\infty,T}h_T}{2\varepsilon}\le 1 ~( \text{diffusion-dominated case} ).
\end{aligned}\right.
\end{equation}
In this case, the convergence rate of the energy-norm error is
\begin{equation}\label{energy error convergence rate}
\norm{\bm{u}-\bm{u}_h} \lesssim \left\{\begin{aligned}
O(h^{k+\frac{1}{2}})\hspace{4em}~~& ( \text{advection-dominated case} ),\\
O\left(h^k(\varepsilon^{\frac{1}{2}}+h^{\frac{1}{2}}+h\varepsilon^{-\frac{1}{2}})\right)~~& ( \text{diffusion-dominated case} ).
\end{aligned}\right.
\end{equation}
However, the specific optimal choice of $\delta_T$ needs more detailed discussion.  
\end{remark}

\section{Numerical experiment}\label{sect:numerical_experiment}
In this section, we present several numerical experiments in both two and three dimensions to verify our theoretical results, as well as to display the performance of the proposed SUPG method in the presence of layers. {Unless otherwise specified,} the computational domain is $\Omega = (0,1)^d$ for $d=2, 3$, where uniform simplex meshes with different mesh sizes are applied. Each coordinate axis is partitioned into $N$ equal segments, {as shown in Figure \ref{2D and 3D square mesh}}.  
\begin{figure}[!htbp]    
  \centering            
  \subfloat[2D mesh \label{uniform 2D mesh fig}]
  {
      \includegraphics[width=0.4\textwidth]{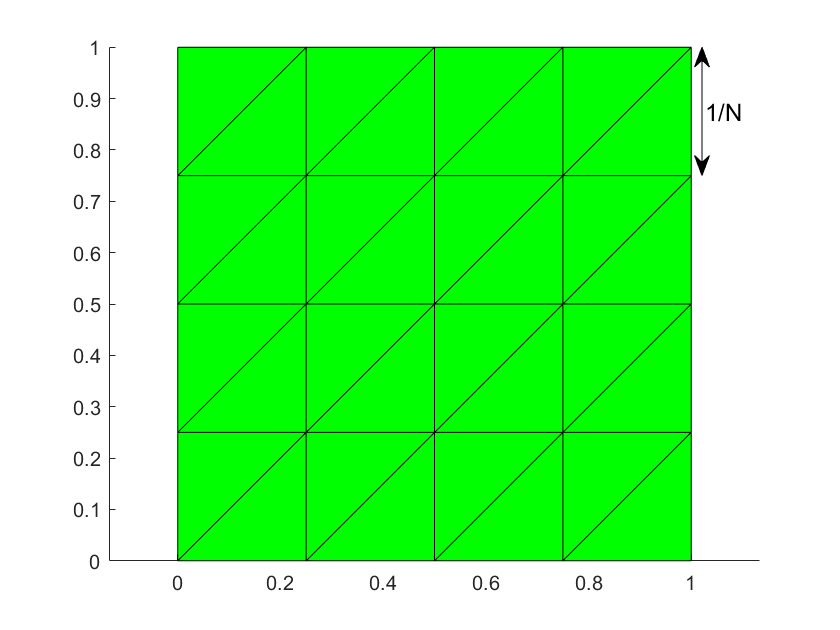} 
  }
  \subfloat[3D mesh]
  {
      \includegraphics[width=0.45\textwidth]{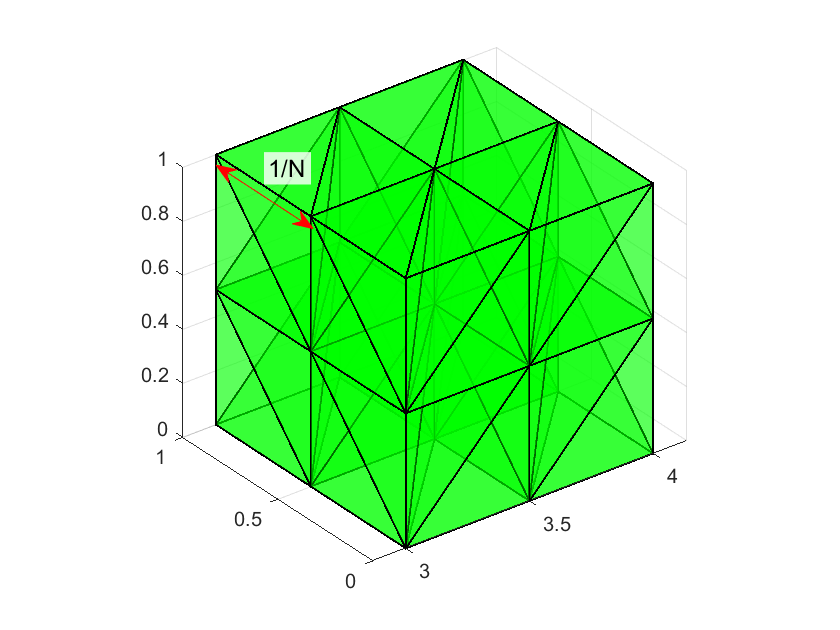} 
  }
  \caption{2D and 3D meshes}\label{2D and 3D square mesh}
\end{figure}
$\bm{V}_h$ is taken to be  the Nédélec finite element space
 of second kind. In cases with translational symmetry, \eqref{curl_advection_diffusion_equation} reduces to the two-dimensional boundary value problem posed on a domain $\Omega\subset\mathbb{R}^2$:
\begin{equation}\label{curl_advection_diffusion_equation_reduceto2D}
    \left\{
    \begin{aligned}
        \bm{R}\nabla(\varepsilon\nabla\cdot(\bm{R}\bm{u}))-\bm{R}\bm{\beta}\nabla\cdot(\bm{R}\bm{u})+\nabla(\bm{\beta}\cdot\bm{u})+\gamma\bm{u} &= \bm{f} ~~~~\text{in}~~\Omega \\
        (\bm{R}\bm{n}\cdot\bm{u})\bm{R}\bm{n} + \chi_{\Gamma^-}(\bm{u}\cdot\bm{n})\bm{n} &= \bm{g}~~~~\text{on}~~\Gamma
    \end{aligned}
    \right.
\end{equation}
where $\bm{R}=\begin{bmatrix}
    0 & 1\\
    -1 & 0
\end{bmatrix}$ is the $\frac{\pi}{2}$-rotation matrix.\par
In each experiment, unless otherwise specified, we set
$$
\alpha|_{\partial T}(x) = \left\{\begin{aligned}
    0,~~& \bm{\beta}(x)\cdot\bm{n}(x)>0    \\
    1,~~& \bm{\beta}(x)\cdot\bm{n}(x)\le0
\end{aligned}\right.
$$
for facet weight. For simplicity, the term $\rho_0\Vert\bm{u}_h\Vert_0^2$ in energy norm is replaced by $\Vert\bm{u}_h\Vert_0^2$, i.e., we take
\begin{equation}
\begin{aligned}
\norm{\bm{u}_h}^2&:=\varepsilon\Vert\nabla\times\bm{u}_h\Vert_0^2+\Vert\bm{u}_h\Vert_0^2+\sum_{T\in\mathcal{T}_h}\delta_T\Vert \tilde{L}_{\bm{\beta},h}\bm{u}_h\Vert_{0,T}^2\\
&+\frac{1}{2}\sum_{F\in\mathcal{F}_h^{\circ}}\langle \vert\llbracket{\alpha}\rrbracket\bm{\beta}\cdot\bm{n}\vert,\vert\llbracket\bm{u}_h\rrbracket\vert^2\rangle_F + \frac{1}{2}\sum_{F\in\mathcal{F}_h^{\partial}}\langle\vert\bm{\beta}\cdot\bm{n}\vert,\vert\bm{u}_h\vert^2\rangle_F.
\end{aligned}
\end{equation}
Obviously, this does not affect the convergence order.

{All numerical simulations were performed on a workstation running Ubuntu 20.04 LTS, equipped with dual \textbf{Intel Xeon Gold 6248R processors} (3.00 GHz, total 48 cores) and \textbf{192 GB of RAM}. The algorithms were implemented in the \textbf{MATLAB 2021b} environment utilizing the \textbf{iFEM} software package~\cite{chen2008ifem}.}

\subsection{Example 1: 3D convergence tests}\label{Numerical_experiments_example1}
Let $\Omega=(0,1)^3$. We set $\gamma=8$ and $\bm{\beta}=[1-z/2,2+x,3-y]^T$, a pair that satisfies condition \eqref{well_posedness_assumption_formulation}, and the source term $\bm{f}$ and boundary data $\bm{g}$ are chosen such that the exact solution is
$$
\bm{u}(x,y,z) = \begin{bmatrix}
    ye^{xz}  \\
    -x^2y  \\
    \sin(xyz)
\end{bmatrix},
$$
We take $\delta_T=0.4/N$ and test the convergence behavior for various $\varepsilon$. For comparison, the numerical results obtained by the standard Galerkin method \eqref{standard_Galerkin_method} are also listed. The results are shown in Table \ref{3D_numerical_results_table}.\par
\begin{table}[!htbp]
\centering
\begin{tabular}{ |c|cc|c|cccc|cc| } 
 \hline
 \multirow{2}{1em}{$\varepsilon$}&\multirow{2}{1em}{$k$}&\multirow{2}{1em}{$N$}&\multirow{2}{5.5em}{{\#free DoFs}}&\multicolumn{4}{|c|}{SUPG}&\multicolumn{2}{|c|}{No Stabilization}\\
 \cline{5-8}\cline{9-10}
  &  &&  & $\Vert\bm{e}_h\Vert_{0}$ & order & $\norm{\bm{e}_h}$ & order & $\Vert\bm{e}_h\Vert_{0}$ & order \\ 
 \hline
 \multirow{8}{2em}{1e-4} & \multirow{4}{0.5em}{\vspace{0em}$1$} & 2 &{52}& 7.6666e-2 & - & 6.6495e-1 & - & 8.2355e-2 &-\\ 
 && 4 &{632}& 1.7471e-2 & 2.13 & 2.1008e-1 &1.66 & 2.8146e-2 &1.55\\ 
 && 8 &{6,064}& 3.9708e-3 & 2.14 & 7.0171e-2 &1.58 & 1.0235e-2 &1.46\\ 
 && 16 &{52,832}& 9.2980e-4 & 2.09 & 2.4141e-2 &1.54 & 4.0153e-3 &1.35\\ 
 \cline{2-8}\cline{9-10}
 &\multirow{4}{0.5em}{\vspace{0em}$2$} & 2 &{294}& 5.9608e-3 & - & 7.7253e-2 & - & 8.5071e-3 &-\\ 
 && 4 &{2964}& 7.3609e-4 &3.02  & 1.3485e-2 &2.52  & 1.5806e-3 &2.43\\ 
 && 8 &{26,376}& 9.3468e-5 &2.98  & 2.3432e-3 &2.52  & 3.0638e-4 &2.37\\ 
 && 16 &{222,096}& 1.2012e-5 &2.96  & 4.0922e-4 &2.52  & 5.8485e-5 &2.39\\ 
 \hline
 \multirow{8}{2em}{1e-6} & \multirow{4}{0.5em}{\vspace{0em}$1$} & 2 &{52}& 7.6665e-2 & - & 6.6493e-1 & - & 8.2367e-2 &-\\ 
 && 4 &{632}& 1.7470e-2 &2.13  & 2.1006e-1 &1.66 & 2.8168e-2 &1.55\\ 
 && 8 &{6,064}& 3.9705e-3 &2.14  & 7.0156e-2 &1.58 & 1.0263e-2 &1.46\\ 
 && 16 &{52,832}& 9.2975e-4 &2.09  & 2.4130e-2 &1.54  & 4.0581e-3 &1.34\\ 
 \cline{2-8}\cline{9-10}
 &\multirow{4}{0.5em}{\vspace{0em}$2$} & 2 &{294}& 5.9604e-3 & - & 7.7249e-2 & - & 8.5171e-3 &-\\ 
 && 4 &{2,964}& 7.3607e-4 &3.02  & 1.3483e-2 &2.52  & 1.5912e-3 &2.42\\ 
 && 8 &{26,376}& 9.3517e-5 &2.98  & 2.3428e-3 &2.52  & 3.1752e-4 &2.33\\ 
 && 16 &{222,096}& 1.2037e-5 &2.96  & 4.0912e-4 &2.52  & 6.8196e-5 &2.22\\ 
 \hline
\end{tabular}
\caption{Example1: Convergence results for the 3D $\bm{H}(\mathrm{curl})$ problem with a smooth exact solution.}\label{3D_numerical_results_table}
\end{table}

The numerical experiments confirm that the convergence order of the energy norm is $k+\frac{1}{2}$ in the advection-dominated case, which is consistent with the theoretical analysis. Additionally, the results indicate $(k + 1)$-th order convergence in the $L^2$ norm. In contrast, unstabilized results exhibit reduced convergence orders in the advection-dominated cases. The contrast between the stabilized and unstabilized results clearly demonstrates the effectiveness of the proposed stabilization strategy in achieving the expected convergence rates and improving overall accuracy.

\subsection{Example 2: 2D convergence tests}\label{Numerical_experiments_example2}
Let $\Omega=(0,1)^2$ and set $\gamma=1$. The advective field is given by $\bm{\beta} = [y-1/2,-x+1/2]^T$, which satisfies condition \eqref{well_posedness_assumption_formulation}. The source term $\bm{f}$ and boundary data $\bm{g}$ are chosen such that the exact solution is given by
$$
\bm{u}(x,y) = \begin{bmatrix}
    16x(1-x)y(1-y)\\
    e^x\sin(\pi x)\sin(\pi y)
\end{bmatrix}.
$$
We set $\delta_T = 0.4/N$ for the SUPG method and examine its convergence across varying $\varepsilon$, while concurrently evaluating the standard Galerkin method \eqref{standard_Galerkin_method} for comparison. The results are shown in Table \ref{2D_numerical_results_table}.\par
\begin{table}[!htbp]
\centering 
\begin{tabular}{ |c|cc|c|cccc|cc| } 
 \hline
 \multirow{2}{1em}{$\varepsilon$}&\multirow{2}{1em}{$k$}&\multirow{2}{1em}{$N$}&\multirow{2}{5.5em}{{\#free DoFs}}&\multicolumn{4}{|c|}{SUPG}&\multicolumn{2}{|c|}{No Stabilization}\\
 \cline{5-8}\cline{9-10}
  &  & &  & $\Vert\bm{e}_h\Vert_{0}$ & order & $\norm{\bm{e}_h}$ & order & $\Vert\bm{e}_h\Vert_{0}$ & order \\ 
 \hline
 \multirow{10}{2em}{1e-4} &\multirow{5}{0.5em}{\vspace{0em}$1$} & 8&{352} & 1.8901e-2 &  -& 6.6223e-2 & - & 4.4344e-2 &-  \\ 
 && 16&{1,472} & 4.5262e-3 & 2.06 & 2.3686e-2 & 1.48 & 1.4469e-2 &1.62  \\ 
 && 32&{6,016} & 1.1114e-3 & 2.03 & 8.4580e-3 & 1.49 & 5.2422e-3 &1.46  \\ 
 && 64&{24,320} & 2.7620e-4 & 2.01 & 3.0461e-3 & 1.47 & 2.3347e-3 &1.17  \\ 
 && 128&{97,792} & 6.8884e-5 &2.00 & 1.1147e-3 & 1.45 & 1.1239e-3 &1.05  \\ 
 \cline{2-8}\cline{9-10}
 &\multirow{5}{0.5em}{\vspace{0em}$2$} & 8&{912} & 9.5291e-4 & - & 4.7395e-3 & -& 2.2541e-3 &- \\ 
 && 16 &{3,744}& 1.1932e-4 & 3.00 & 8.5531e-4 & 2.47 & 3.6589e-4 &2.62\\ 
 && 32 &{15,168}& 1.4385e-5 & 3.05 & 1.5258e-4 & 2.49 & 4.3877e-5 &3.06\\ 
 && 64 &{61,056}& 1.7059e-6 & 3.08 & 2.7205e-5 & 2.49 & 5.0226e-6 &3.13\\ 
 && 128 &{244,992}& 2.0428e-7 & 3.06 & 4.9074e-6 & 2.47 & 5.3127e-7 &3.24\\ 
 \hline
 \multirow{10}{2em}{1e-6}& \multirow{5}{0.5em}{\vspace{0em}$1$} & 8 &{352}& 1.8923e-2 &-  & 6.5878e-2 & - & 4.5735e-2 & - \\ 
 && 16 &{1,472}& 4.5381e-3 & 2.06 & 2.3435e-2 & 1.49 & 1.5914e-2 &1.52  \\ 
 && 32 &{6,016}& 1.1168e-3 & 2.02 & 8.2808e-3 & 1.50 & 6.1235e-3 &1.38  \\ 
 && 64 &{24,320}& 2.7822e-4 & 2.01 & 2.9241e-3 & 1.50 & 2.9370e-3 &1.06  \\ 
 && 128 &{97,792}& 6.9577e-5 & 2.00 & 1.0332e-3 & 1.50 & 1.5037e-3 &0.97  \\ 
 \cline{2-8}\cline{9-10}
 &\multirow{5}{0.5em}{\vspace{0em}$2$} & 8&{912} & 9.8305e-4 & - & 4.7249e-3 & - & 2.3732e-3 &-\\ 
 && 16&{3,744} & 1.3192e-4 & 2.90 & 8.5139e-4 & 2.47 & 4.4021e-4 &2.43\\ 
 && 32&{15,168} & 1.7674e-5 & 2.90 & 1.5164e-4 & 2.49 & 7.6255e-5 &2.53\\ 
 && 64&{61,056} & 2.3639e-6 & 2.90 & 2.6911e-5 & 2.49 & 1.5492e-5 &2.30\\ 
 && 128&{244,992} & 3.1062e-7 & 2.93 & 4.7698e-6 & 2.50 & 2.9279e-6 &2.40\\ 
 \hline
\end{tabular}
\caption{Example 1: Convergence results for the 2D $\bm{H}(\mathrm{curl})$ problem with a smooth exact solution.}\label{2D_numerical_results_table}
\end{table}
\par
Similarly to Example 1, in the advection-dominated case, the solution obtained by the SUPG method exhibits convergence orders of $k+\frac{1}{2}$ in the energy norm and $k+1$ in the $L^2$-norm. However, for the standard Galerkin method, convergence rates deteriorate significantly as $\varepsilon \to 0$. These results demonstrate the effectiveness of the SUPG method in achieving the theoretically predicted convergence rates.

\subsection{Example 3: Clarification of stabilization terms}
To explore the roles of the stabilization terms $S_h^1$ and $S_h^2$, we repeat the experiments from Examples 1 and 2 with $\alpha^{\pm}$ taken to be $\frac{1}{2}$ on all interior facets. This means that the bilinear form is $a_0(\bm{u}_h,\bm{v}_h) + S_h^2(\bm{u}_h,\bm{v}_h)$ and $\tilde{L}_{\bm{\beta},h}\cdot = {L}_{\bm{\beta},h}\cdot - \bm{r}(\bm{\beta}\cdot\bm{n}^+\llbracket\cdot\rrbracket)$. For $\varepsilon=10^{-6}$ and $k=2$, the numerical results are presented in Tables \ref{Clarification of stabilization terms 2D} and \ref{Clarification of stabilization terms 3D}. In this case, the energy norm is defined as
\begin{equation}
\begin{aligned}
    \nnorm{\bm{v}_h}_{S_h^2}^2:=\varepsilon\Vert\nabla\times\bm{v}_h\Vert_0^2 + \Vert\bm{v}_h\Vert_0^2+\sum_{T\in\mathcal{T}_h}\delta_T\Vert\tilde{L}_{\bm{\beta},h}\bm{v}_h\Vert_{0,T}^2+\frac{1}{2}\sum_{F\in\mathcal{F}_h^\partial}\langle\vert\bm{\beta}\cdot\bm{n}\vert,\vert\bm{v}_h\vert^2\rangle_F.
\end{aligned}
\end{equation}
For comparison, previous results obtained using the $S_h^1+S_h^2$ stabilization are listed. Additionally, results for the \textit{$S_h^1$ only} configuration are presented, which corresponds to setting $\delta_T = 0$.
\begin{table}[!htbp]
\centering 
\begin{tabular}{ |c|cccc|cccc|cc| } 
\hline
\multirow{2}{1em}{$N$} & \multicolumn{4}{|c|}{$S_h^1+S_h^2$} & \multicolumn{4}{|c|}{$S_h^2$ only} & \multicolumn{2}{|c|}{$S_h^1$ only}  \\
\cline{2-5}\cline{6-9}\cline{10-11}
& $\Vert\bm{e}_h\Vert_0$ & order & $\norm{\bm{e}_h}$ & order & $\Vert\bm{e}_h\Vert_0$ & order & $\nnorm{\bm{e}_h}_{S_h^2}$ & order& $\Vert\bm{e}_h\Vert_0$ & order \\
\hline
2 &5.9604e-3&-&7.7249e-2&-&6.4875e-3&-&6.9287e-2&-&7.5347e-3&-  \\
\cline{2-11}
4 &7.3607e-4&3.02 &1.3483e-2&2.52 &8.5512e-4&2.92 &1.1674e-2&2.57 &1.1722e-3&2.68   \\
\cline{2-11}
8 &9.3517e-5&2.98 &2.3428e-3&2.52 &1.1071e-4&2.95 &1.9682e-3&2.57 &2.1488e-4&2.45   \\
\cline{2-11}
16 &1.2037e-5&2.96 &4.0912e-4&2.52 &1.4107e-5&2.97 &3.3686e-4&2.55 &4.8061e-5&2.16   \\
\hline
\end{tabular}
\caption{Example 3: Convergence results for the 3D $\bm{H}(\mathrm{curl})$ problem with $\varepsilon=10^{-6}, k=2$ and various combinations of stabilization terms.}\label{Clarification of stabilization terms 2D}
\end{table}
\par
For both numerical examples, the \textit{$S_h^2$-only} configuration achieves convergence orders of $k+\frac{1}{2}$ in the energy norm and $k+1$ in the $L^2$-norm, with errors remaining robust for vanishing $\varepsilon$. In contrast, solutions computed using the \textit{$S_h^1$-only} configuration exhibit lower convergence rates in the $L^2$-norm. These results highlight the accuracy improvement achieved by the newly proposed term $S_h^2(\cdot,\cdot)$.

\begin{table}[!htbp]
\centering 
\begin{tabular}{ |c|cccc|cccc|cc| } 
\hline
\multirow{2}{1em}{$N$} & \multicolumn{4}{|c|}{$S_h^1+S_h^2$} & \multicolumn{4}{|c|}{$S_h^2$ only} & \multicolumn{2}{|c|}{$S_h^1$ only}  \\
\cline{2-5}\cline{6-9}\cline{10-11}
& $\Vert\bm{e}_h\Vert_0$ & order & $\norm{\bm{e}_h}$ & order & $\Vert\bm{e}_h\Vert_0$ & order & $\nnorm{\bm{e}_h}_{S_h^2}$ & order& $\Vert\bm{e}_h\Vert_0$ & order \\
\hline
8 &9.8305e-4&-&4.7249e-3&-& 8.2524e-4 & - & 3.4209e-3 & - &1.4479e-3&-  \\
16 &1.3192e-4&2.90&8.5139e-4&2.47& 1.0354e-4 & 2.99 & 5.6812e-4 & 2.59 &2.4599e-4& 2.56 \\
32 &1.7674e-5&2.90&1.5164e-4&2.49& 1.3480e-5 & 2.94 & 9.6190e-5 & 2.56 &4.8503e-5& 2.34 \\
64 &2.3639e-6&2.90&2.6911e-5&2.49& 1.7755e-6 & 2.92 & 1.6591e-5 & 2.54 &1.1041e-5& 2.14 \\
128 &3.1062e-7&2.93&4.7698e-6&2.50& 2.3259e-7 & 2.93 & 2.8969e-6 & 2.52 &2.3929e-6& 2.20 \\
\hline
\end{tabular}
\caption{Example 3: Convergence results for the 2D $\bm{H}(\mathrm{curl})$ problem with $\varepsilon=10^{-6}, k=2$ and various combinations of stabilization terms.}\label{Clarification of stabilization terms 3D}
\end{table}

{

\subsection{Example 4: Convergence with respect to degree of polynomial}\label{Numerical_experiments_example4}
In this numerical experiment, we examine the convergence with respect to the polynomial degrees. Set $\Omega = (0,1)^2, \varepsilon = 10^{-6}, \bm \beta = [y-1/2,-x+1/2]^T, \gamma = 1$, and the source term $\bm f$ and boundary data $\bm g$ are chosen such that the exact solution is
$$
\bm u(x,y) = \left[\begin{aligned}
    & 32x(1-x)y(1-y)\sin(x+y)\\
    &~~~~~~~ e^x\sin(\pi x)\sin(\pi y)
\end{aligned}\right].
$$
Set $\delta_T = 0.4/N$ for the SUPG method. For several fixed meshes with different refinement levels $N$, we plot the error in the energy norm $\norm{\bm{e}_h}$ against the polynomial degree $k$ of the finite element space, as shown in Figure \ref{high_order_polynomial_energy_error}. Numerically, it can be observed that on a fixed mesh, the proposed SUPG method converges as the polynomial degree $k$ increases. This behavior is consistent with observations for scalar problems.
\begin{figure}[!htbp]
\centering
\includegraphics[width=0.5\textwidth]{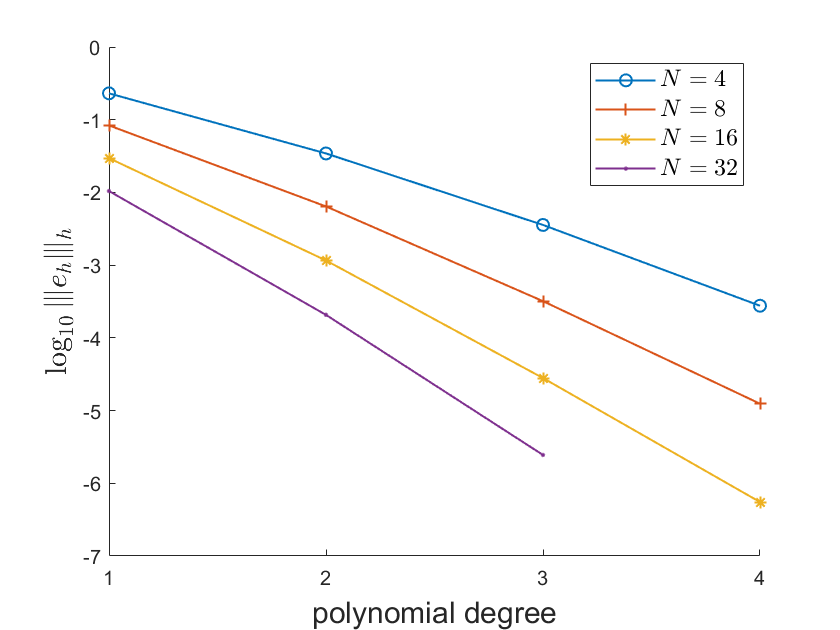} 
\caption{Example 4: Dependence of error of the SUPG scheme on $N$ and polynomial degree $k$ with $\delta_T = 0.4/N$.} \label{high_order_polynomial_energy_error}
\end{figure}

\subsection{Example 5: Numerical performance in complex regions}\label{Numerical_experiments_example5}
In this numerical experiment, we examine the performance of the SUPG method on complex domains. Set $\varepsilon = 10^{-6}, \bm \beta = [y^2/2+2;-x^2/2-1/2]^T, \gamma = 4$, and the source term $\bm f$ and boundary data $\bm g$ are chosen such that the exact solution is
$$
\bm u(x,y) = \left[\begin{aligned}
    & (1-x)(y+1)\cos\pi(x^2+y)\\
    & ~~~~~~-e^x\sin\pi(x+y)
\end{aligned}\right].
$$
We perform numerical tests with the SUPG parameter set to $\delta_T = 0.4/N$, and compare the results with the standard Galerkin method. Simulations are conducted on both a hexagonal domain and an L-shaped domain, with the corresponding meshes and the definition of $N$ illustrated in Figure \ref{complex regions: meshes and N}. The numerical results are presented in Table \ref{hexagon convergence} and \ref{Lshape convergence}.
\begin{figure}[!htbp]    
  \centering            
  \subfloat[hexagonal domain]
  {
      \includegraphics[width=0.45\textwidth]{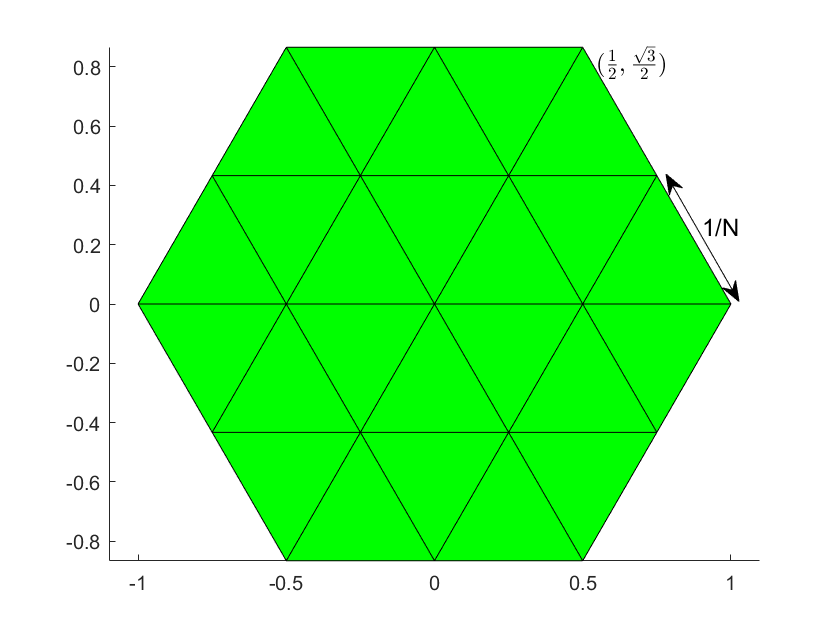} 
  }
  \subfloat[L-shaped domain]
  {
      \includegraphics[width=0.45\textwidth]{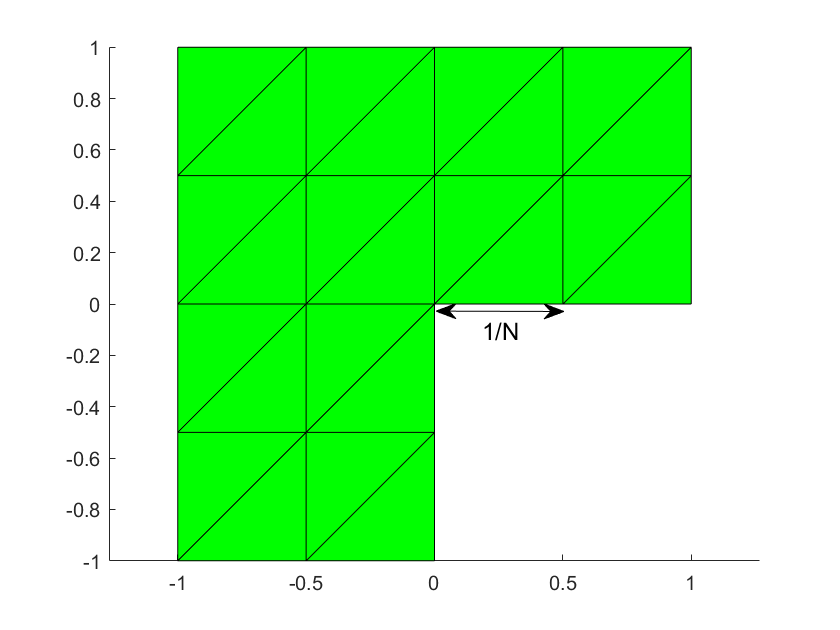} 
  }
  \caption{Example 5: Complex regions}\label{complex regions: meshes and N}
\end{figure}
\begin{table}[!htbp]
\centering
\begin{tabular}{ |c|cc|c|cccc|cc| } 
 \hline
 \multirow{2}{1em}{$\varepsilon$}&\multirow{2}{1em}{$k$}&\multirow{2}{1em}{$N$}&\multirow{2}{5.5em}{\#free DoFs}&\multicolumn{4}{|c|}{SUPG}&\multicolumn{2}{|c|}{No Stabilization}\\
 \cline{5-8}\cline{9-10}
  &  & &  & $\Vert\bm{e}_h\Vert_{0}$ & order & $\norm{\bm{e}_h}$ & order & $\Vert\bm{e}_h\Vert_{0}$ & order \\ 
 \hline
 \multirow{10}{2em}{1e-6} &\multirow{5}{0.5em}{\vspace{0em}$1$} & 4&264 & 9.7349e-2 & - & 1.0216e+0 & - & 1.9810e-1 & - \\ 
 && 8&1,104 & 2.4726e-2 & 1.98 & 3.8395e-1 & 1.41 & 9.8820e-2 & 1.00 \\ 
 && 16&4,512 & 6.4028e-3 & 1.95 & 1.3794e-1 & 1.48 & 4.9706e-2 & 0.99 \\ 
 && 32&18,240 & 1.6589e-3 & 1.95 & 4.8990e-2 & 1.49 & 2.4942e-2 & 0.99 \\ 
 && 64&73,344 & 4.2407e-4 &1.97 & 1.7347e-2 & 1.50 & 1.2493e-2 & 1.00 \\ 
 \cline{2-8}\cline{9-10}
 &\multirow{5}{0.5em}{\vspace{0em}$2$} & 4&684 & 1.2173e-2 & - & 1.9134e-1 &- & 2.3924e-2  & -\\ 
 && 8 &2,808& 1.5190e-3 & 3.00 & 3.2587e-2 & 2.55 & 5.4797e-3 &2.13\\ 
 && 16 &11,376& 1.8940e-4 & 3.00 & 5.6369e-3 & 2.53 & 1.3471e-3 &2.02\\ 
 && 32 &45,792& 2.4221e-5 & 2.97 & 9.8976e-4 & 2.51 & 3.3381e-4 &2.01\\ 
 && 64 &183,744& 3.2902e-6 & 2.88 & 1.7580e-4 & 2.49 & 8.2087e-5 &2.02\\ 
 \hline
\end{tabular}
\caption{Example 5: Convergence results on Hexagonal domain with a smooth exact solution.}\label{hexagon convergence}
\end{table}
\begin{table}[!htbp]
\centering
\begin{tabular}{ |c|cc|c|cccc|cc| } 
 \hline
. \multirow{2}{1em}{$\varepsilon$}&\multirow{2}{1em}{$k$}&\multirow{2}{1em}{$N$}&\multirow{2}{5.5em}{\#free DoFs}&\multicolumn{4}{|c|}{SUPG}&\multicolumn{2}{|c|}{No Stabilization}\\
 \cline{5-8}\cline{9-10}
  &  & &  & $\Vert\bm{e}_h\Vert_{0}$ & order & $\norm{\bm{e}_h}$ & order & $\Vert\bm{e}_h\Vert_{0}$ & order \\ 
 \hline
 \multirow{10}{2em}{1e-6} &\multirow{5}{0.5em}{\vspace{0em}$1$} & 4&256 & 2.2070e-1 & - & 1.9839e+0 & - & 3.5037e-1 & - \\ 
 && 8&1,088 & 4.9611e-2 & 2.15 & 7.4510e-1 & 1.41 & 1.6479e-1 &1.09  \\ 
 && 16&4,480 & 1.0957e-2 & 2.18 & 2.6654e-1 & 1.48 & 6.9347e-2 &1.25  \\ 
 && 32&18,176 & 2.5803e-3 & 2.09 & 9.4256e-2 & 1.50 & 3.1741e-2 &1.13  \\ 
 && 64&73,216 & 6.3132e-4 &2.03 & 3.3285e-2 & 1.50 & 1.5530e-2 &1.03  \\ 
 \cline{2-8}\cline{9-10}
 &\multirow{5}{0.5em}{\vspace{0em}$2$} & 4&672 & 2.3705e-2 & - & 3.2517e-1 & -&  5.2408e-2& -\\ 
 && 8 &2,784& 2.8395e-3 & 3.06 & 5.9225e-2 & 2.46 & 9.5513e-3 &2.46\\ 
 && 16 &11,328& 3.3927e-4 & 3.07 & 1.0564e-2 & 2.49 & 2.2000e-3 &2.12\\ 
 && 32 &45,696& 4.2979e-5 & 2.98 & 1.8711e-3 & 2.50 & 5.7367e-4 &1.94\\ 
 && 64 &183,552& 5.9320e-6 & 2.86 & 3.3134e-4 &2.50  & 1.2379e-4 &2.21\\ 
 \hline
\end{tabular}
\caption{Example 5: Convergence results on L-shaped domain with a smooth exact solution.}\label{Lshape convergence}
\end{table}

These results further validate that the energy error achieves the theoretical convergence rate of $k+\frac{1}{2}$. Moreover, comparison with the standard Galerkin results demonstrates the necessity of the stabilization approach.

}

\subsection{Example 6: Boundary layers}
In this numerical experiment, we evaluate the oscillation-suppression capability of the SUPG method by solving a two-dimensional boundary layer problem. The problem configuration is specified as follows: $\Omega = (0,1)^2$, $\varepsilon=10^{-6}$, $\bm{\beta} = [1,2]^T$, $\gamma=0$, with source term $\bm{f}=[1,1]^T$ and homogeneous boundary condition $\bm{g}=\bm{0}$. We plot the first component of the approximate solutions obtained by different numerical methods.
\par
{When computed with the standard Galerkin method, as illustrated in Figure~\ref{standard_Galerkin_bad_sect3}, the solution exhibits significant numerical oscillations, leading to a substantial degradation in accuracy.} As shown in Figure~\ref{upwind_method_figure_N16_N64}, the \textit{$S_h^1$ only} method provides inadequate stabilization, as spurious oscillations persist under mesh refinement. In contrast, Figure \ref{SUPG_bdlayer_figure_N16_N64} clearly shows that the SUPG method yields stable solutions with significantly reduced oscillations as $\varepsilon \to 0$. {Moreover, Figure \ref{SUPG_bdlayer_figure_N16_k=2} reveals that the stabilization effect of the SUPG method is further enhanced in higher-order finite element spaces}. This result confirms the superior stabilization capability of our proposed method.


\begin{figure}[!htbp]    
  \centering            
  \subfloat[$N=16$]
  {
      \includegraphics[width=0.45\textwidth]{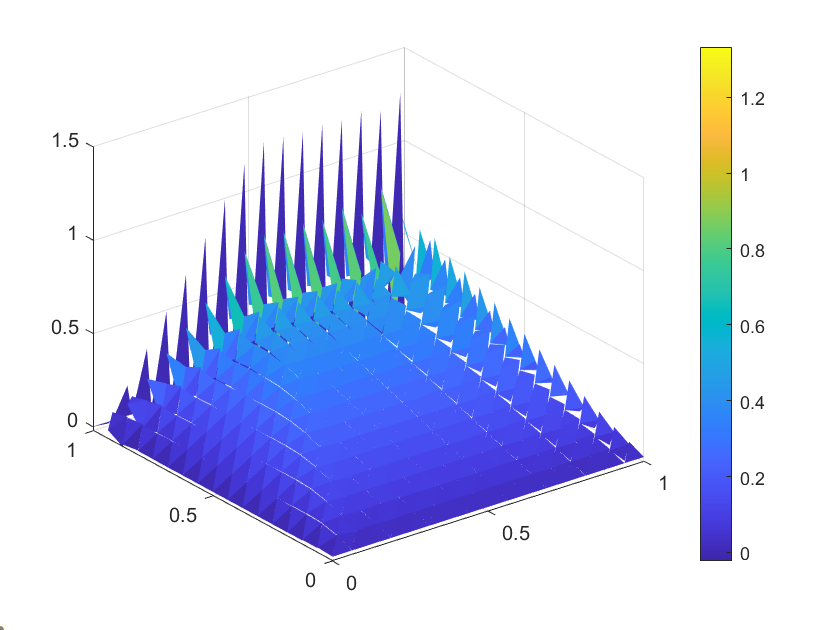} \label{upwind_method_figure_N16} 
  }
  \subfloat[$N=64$]
  {
      \includegraphics[width=0.45\textwidth]{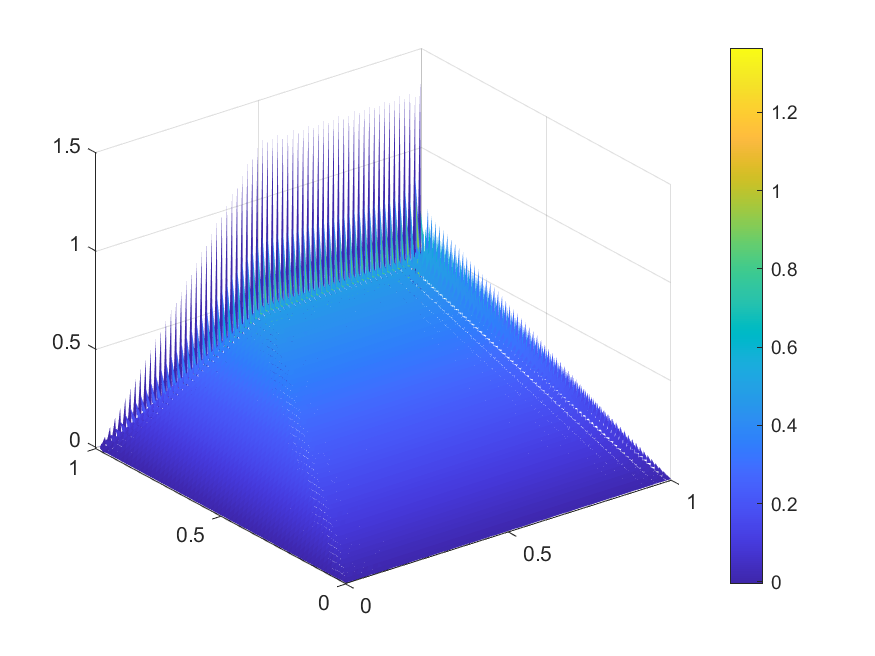} \label{upwind_method_figure_N64} 
  }
  \caption{Example 6. Numerical solution obtained by the \textit{$S_h^1$ only} method ($k=1$).}\label{upwind_method_figure_N16_N64}
\end{figure}

\begin{figure}[!htbp]    
  \centering            
  \subfloat[$N=16$]   
  {
      \includegraphics[width=0.45\textwidth]{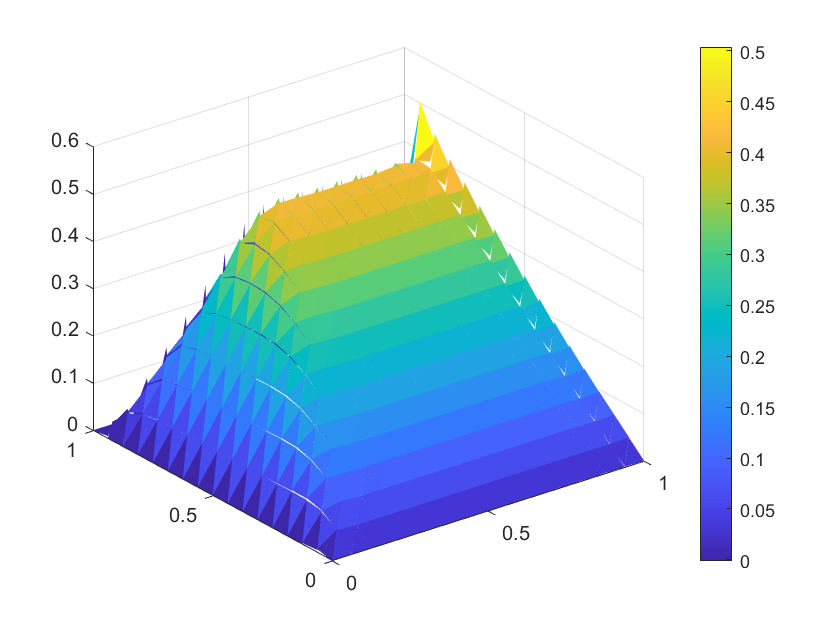}
  }
  \subfloat[$N=64$]
  {
      \includegraphics[width=0.45\textwidth]{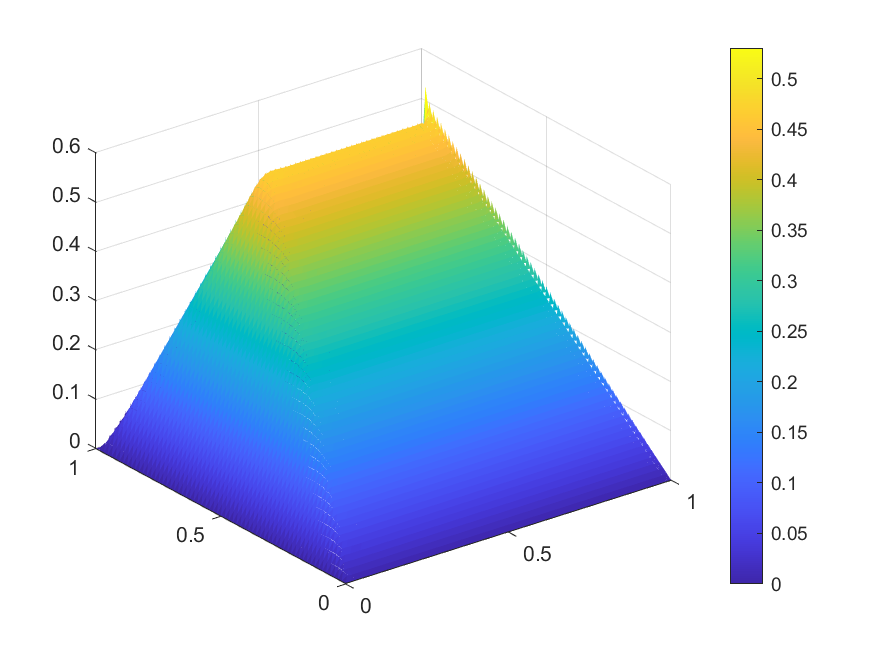}
  }
  \caption{Example 6. Numerical solution $u_h$ obtained by the SUPG method \eqref{magnetic_SUPG_scheme} ($k=1, \delta_T=0.4/N$).}\label{SUPG_bdlayer_figure_N16_N64}
\end{figure}

\begin{figure}[!htbp]
\centering
\includegraphics[width=0.45\textwidth]{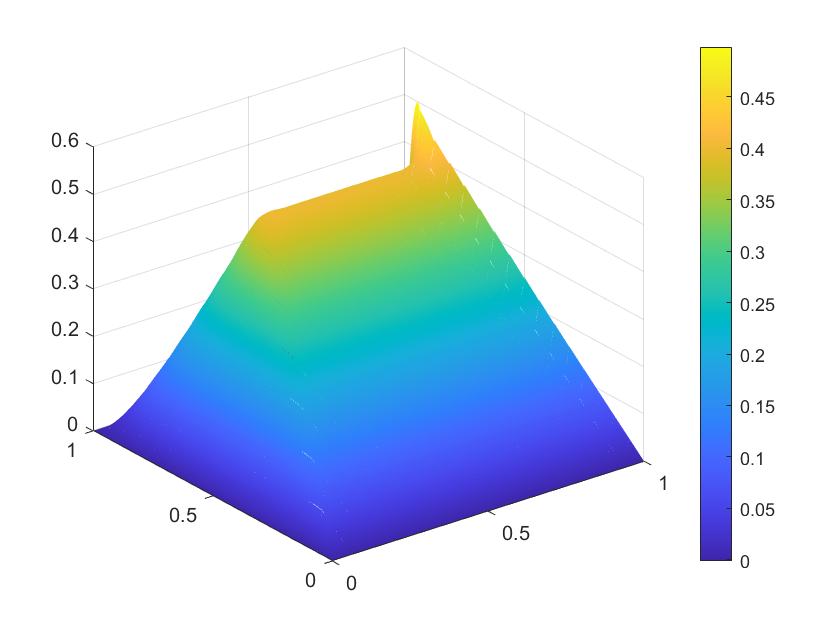} 
\caption{{Example 6. Numerical solution $u_h$ obtained by the SUPG method \eqref{magnetic_SUPG_scheme} ($k=2, N=16, \delta_T=0.4/N$).}} 
\label{SUPG_bdlayer_figure_N16_k=2}
\end{figure}

\subsection{Example 7: Internal layers}
In this numerical experiment, we implement the SUPG method for a two-dimensional internal layer problem. The computational domain is $\Omega = (0,1)^2$. The problem parameters are $\varepsilon = 10^{-3}$, $\bm{\beta} = [1,0]^T$, and $\gamma = 0$. The piecewise-defined source term is
\[
\bm{f}(y) = 
\begin{cases} 
[1,1]^T & \text{for } 0.25 < y < 0.75, \\
[0,0]^T & \text{otherwise}.
\end{cases}
\]
Homogeneous Dirichlet boundary conditions are applied. The stabilization parameters are set to $\delta_T = 0.4/N$. We present the first components of the numerical solutions, along with their cross-sections at $x=0.5$.

Figure \ref{SUPG_intlayer_figure} shows that the SUPG method effectively controls spurious oscillations, restricting their propagation into smooth solution regions and keeping their amplitudes at an moderate level. {Moreover, the results in Figure~\ref{SUPG_intlayer_figure_highorder} indicate that increasing the order of the polynomial space leads to a further reduction in spurious oscillations.}

\begin{figure}[!htbp]    
  \centering  
  \subfloat[Numerical solution $\bm{u}_h^{(1)}$, $N=16, k=1$]
  {
      \includegraphics[width=0.45\textwidth]{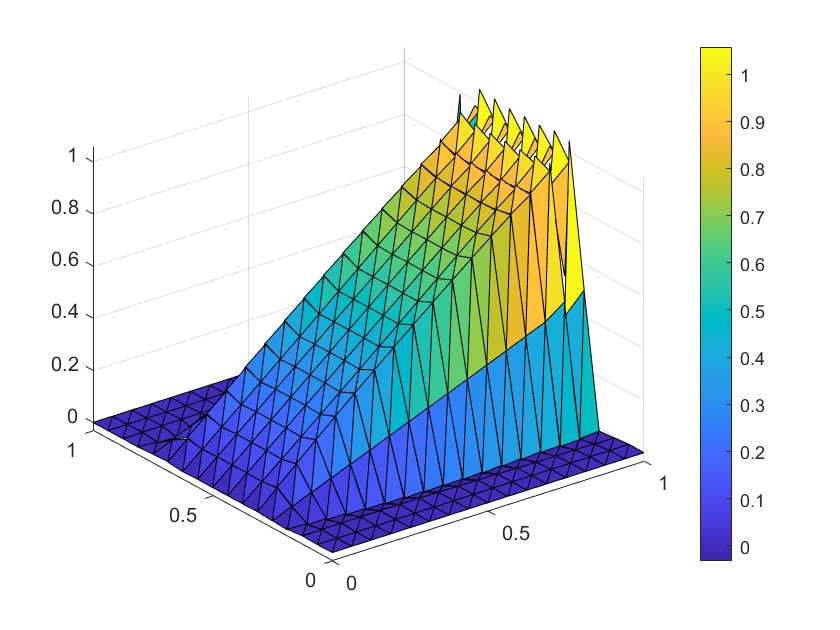}\label{SUPG_intlayer_figure_N16_numericalsol}
  }
  \subfloat[$\bm{u}_h^{(1)}(0.5,y)$]
  {
      \includegraphics[width=0.45\textwidth]{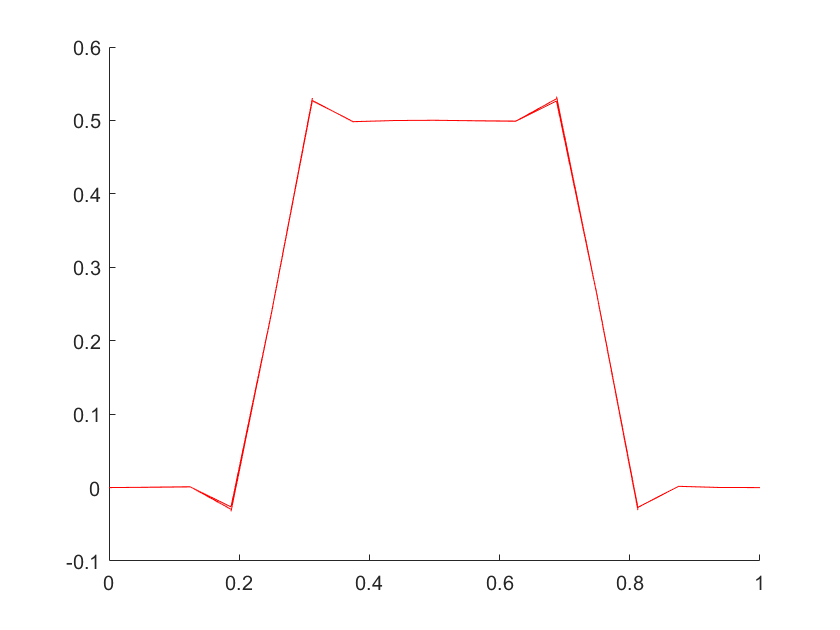}\label{SUPG_intlayer_figure_N16_x=0.5}
  }
  \\
  \subfloat[Numerical solution $\bm{u}_h^{(2)}$, $N=32, k=1$]
  {
      \includegraphics[width=0.45\textwidth]{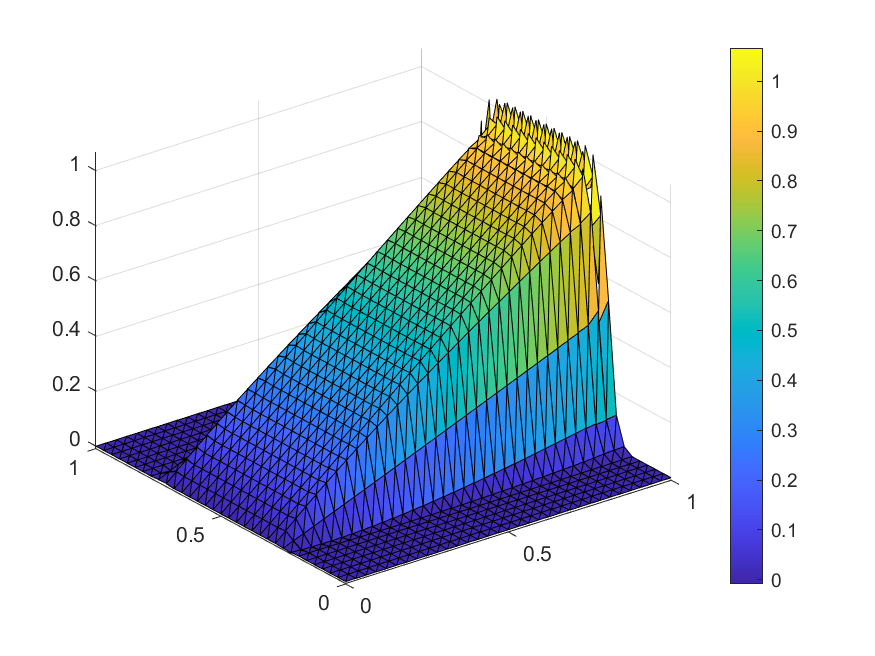}\label{SUPG_intlayer_figure_N32_numericalsol}
  }
  \subfloat[$\bm{u}_h^{(2)}(0.5,y)$]
  {
      \includegraphics[width=0.45\textwidth]{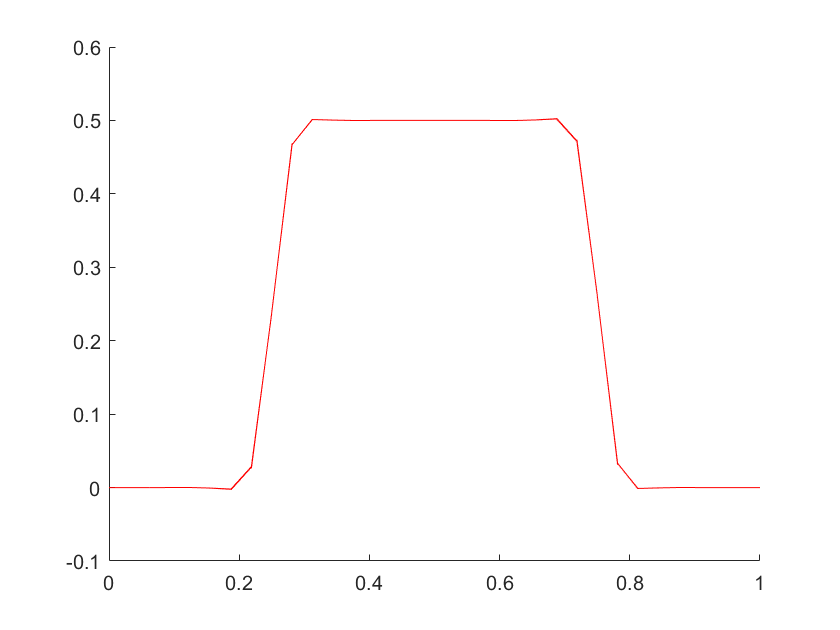}\label{SUPG_intlayer_figure_N32_x=0.5}
  }

  \caption{Example 7. Numerical solutions of the internal layer problem.}\label{SUPG_intlayer_figure}
\end{figure}

\begin{figure}[!htbp]    
  \centering  
  \subfloat[Numerical solution $\bm{u}_h^{(3)}$, $N=16, k=2$]
  {
      \includegraphics[width=0.45\textwidth]{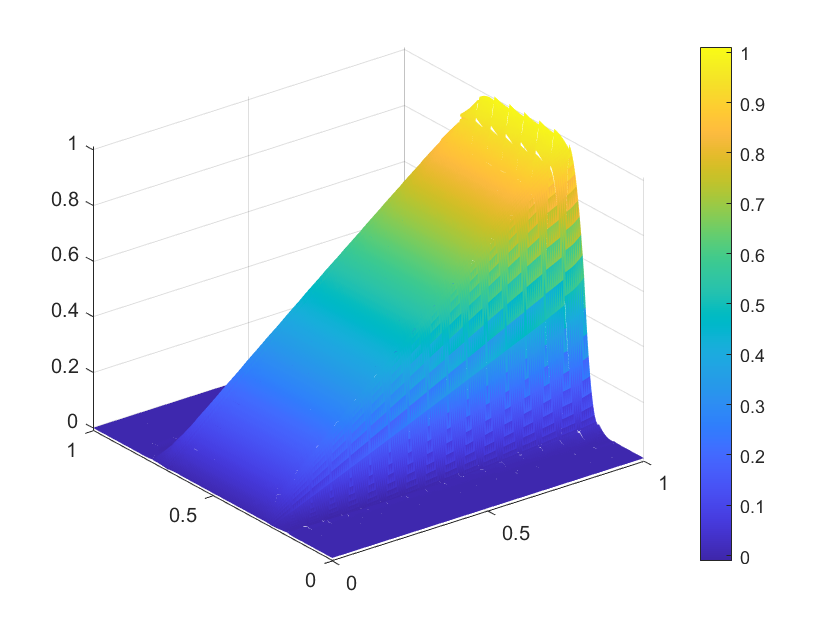}
  }
  \subfloat[$\bm{u}_h^{(3)}(0.5,y)$]
  {
      \includegraphics[width=0.45\textwidth]{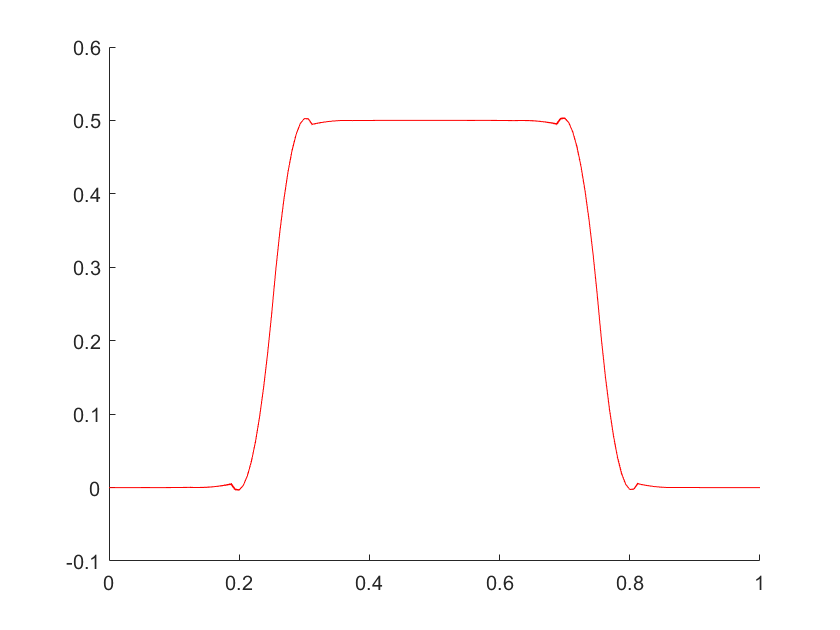}
  }
  \\
  \subfloat[Numerical solution $\bm{u}_h^{(4)}$, $N=16, k=3$]
  {
      \includegraphics[width=0.45\textwidth]{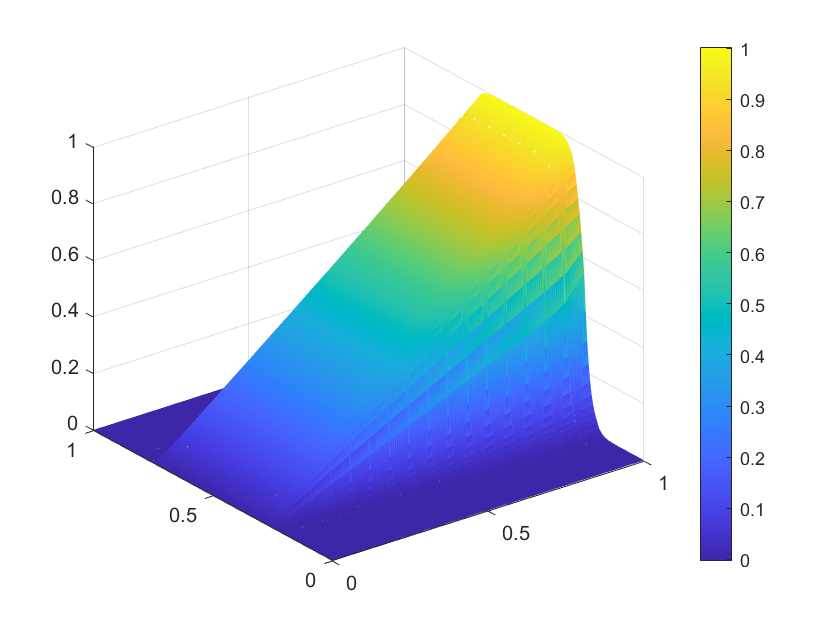}
  }
  \subfloat[$\bm{u}_h^{(4)}(0.5,y)$]
  {
      \includegraphics[width=0.45\textwidth]{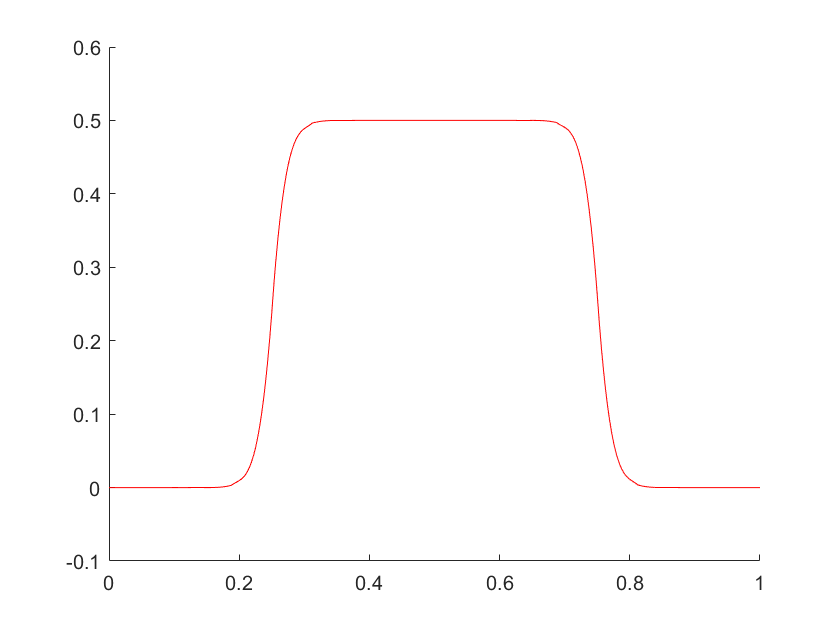}
  }

  \caption{{Example 7. Numerical solutions of the internal layer problem, obtained with higher-order finite elements.}}\label{SUPG_intlayer_figure_highorder}
\end{figure}

{
\subsection{Example 8: Manual refined meshes for boundary layers}
In this numerical experiment, we replicate the problem setting of Example 6 to evaluate the performance of the proposed SUPG method on manual refined meshes. In this case, we set the SUPG parameters as $\delta_T = 0.4\sqrt{2 \times \text{area of } T}$. Note that for unrefined elements, the value of the SUPG parameter coincides with $\delta_T = 0.4/N$ used in the previous example. We consider two types of manual refinement: the first type preserves shape-regularity during refinement (see Figure \ref{type-1 manual refined meshes}), while the second type loses shape-regularity (see Figure \ref{type-2 manual refined meshes}).

\begin{figure}[!htbp]    
  \centering            
  \subfloat[$\mathcal{T}^{\text{manual}}_{1,\text{bd}} , h_{\min} = 1/64$]   
  {
      \includegraphics[width=0.5\textwidth]{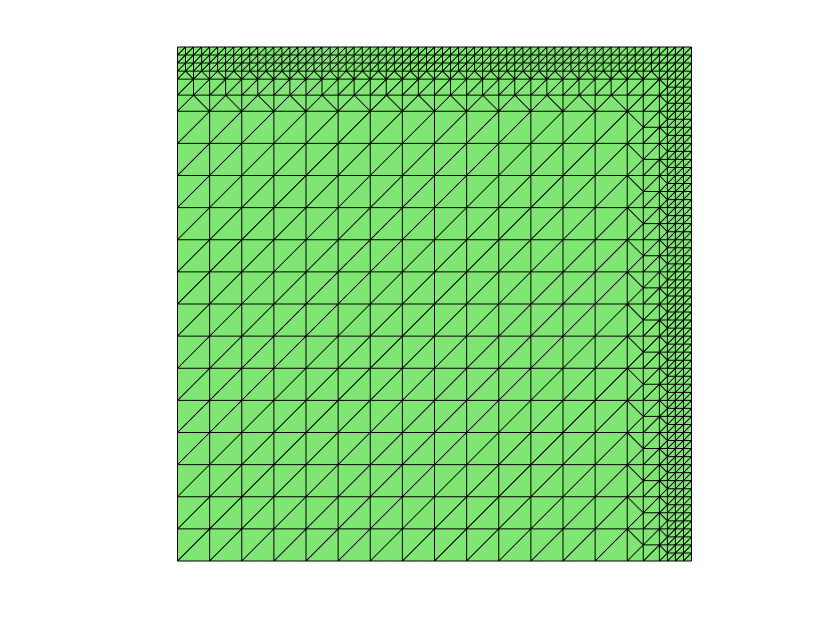}
  }
  \centering      
  \subfloat[$\mathcal{T}^{\text{manual}}_{2,\text{bd}} , h_{\min} = 1/256$\label{first type manual mesh level2}]
  {
      \includegraphics[width=0.5\textwidth]{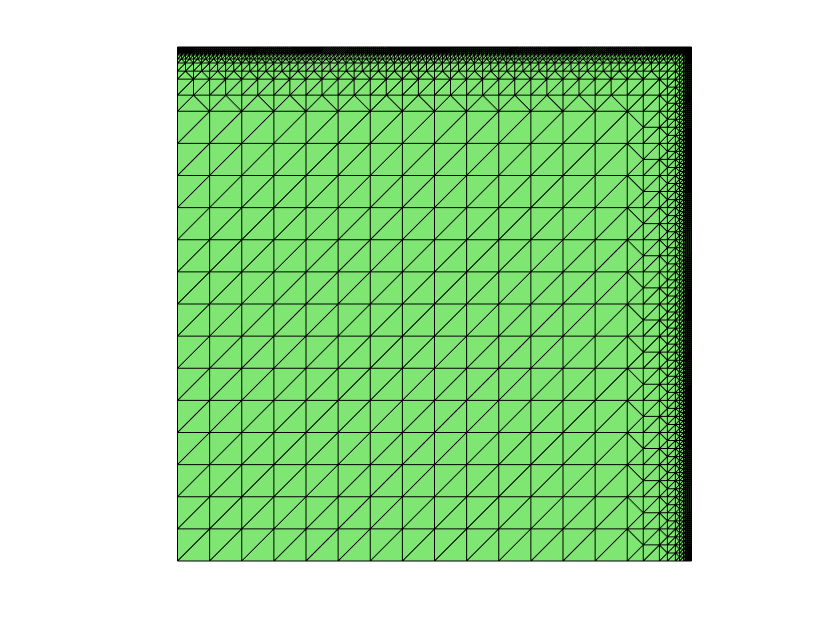}
  }
  \caption{Example 8: The first type of manual refined mesh. $h_{\min}$ denotes the length of the shortest leg among all right triangles constituting the mesh.}\label{type-1 manual refined meshes}
\end{figure}

\begin{figure}[!htbp]    
  \centering            
  \subfloat[$\mathcal{T}^{\text{manual}}_{3,\text{bd}} , h_{\min} = 1/64$]   
  {
      \includegraphics[width=0.5\textwidth]{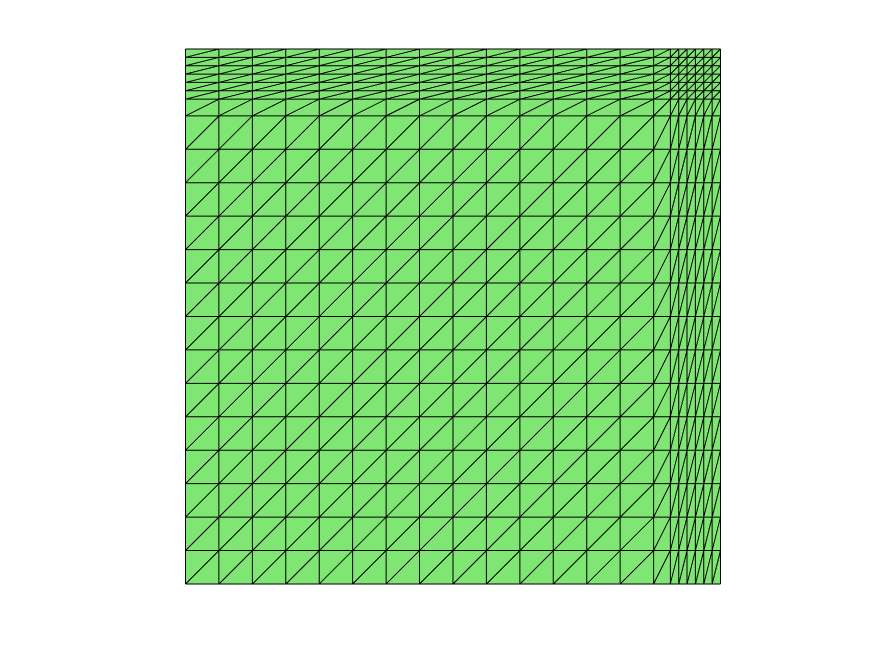}
  }
  \centering      
  \subfloat[$\mathcal{T}^{\text{manual}}_{4,\text{bd}} , h_{\min} = 1/256$\label{second type manual mesh level2}]
  {
      \includegraphics[width=0.5\textwidth]{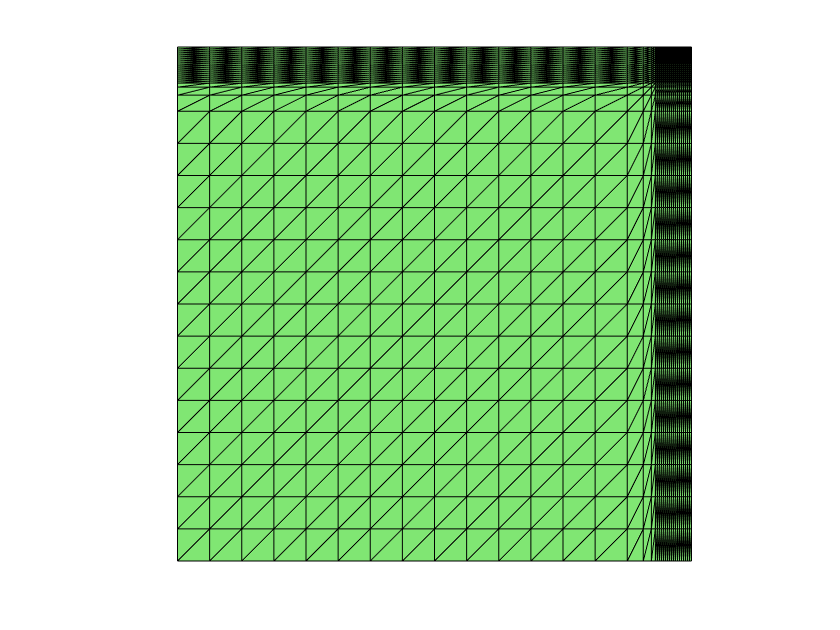}
  }
  \caption{Example 8: The second type of manual refined mesh.}\label{type-2 manual refined meshes}
\end{figure}

For the four manually refined meshes shown in Figures \ref{type-1 manual refined meshes} and \ref{type-2 manual refined meshes}, the first component of the numerical solution obtained by the proposed SUPG method is displayed in Figure \ref{SUPG solution first component in manual refined meshes}. For a more intuitive visualization, we plot the profile of the first component along the cross-section $x = x_0$ for some $x_0\in(0,1)$ in Figure \ref{numerical solutions bd manual x=x0}.

\begin{figure}[!htbp]    
  \centering            
  \subfloat[$\mathcal T_{1,\text{bd}}^{\text{manual}}$]   
  {
      \includegraphics[width=0.45\textwidth]{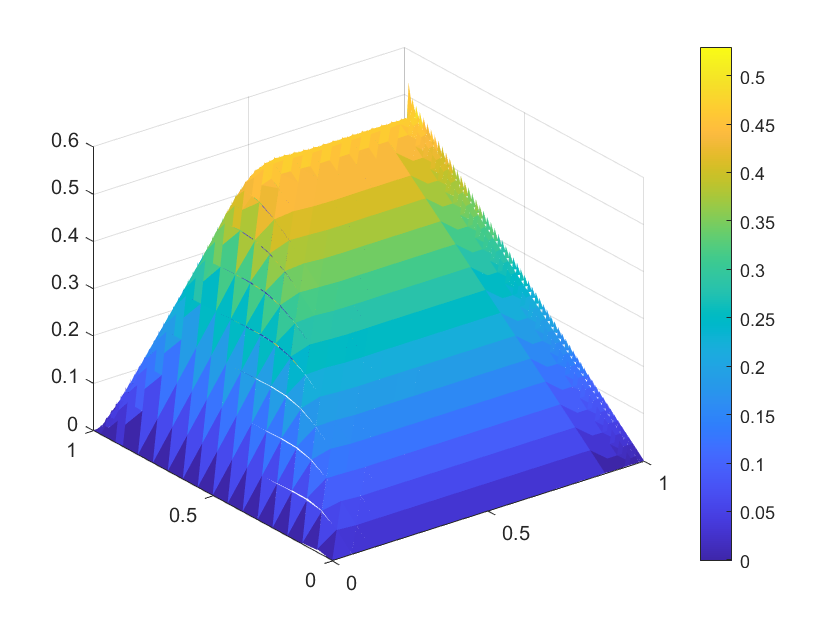}
  }
  \centering      
  \subfloat[$\mathcal T_{2,\text{bd}}^{\text{manual}}$]
  {
      \includegraphics[width=0.45\textwidth]{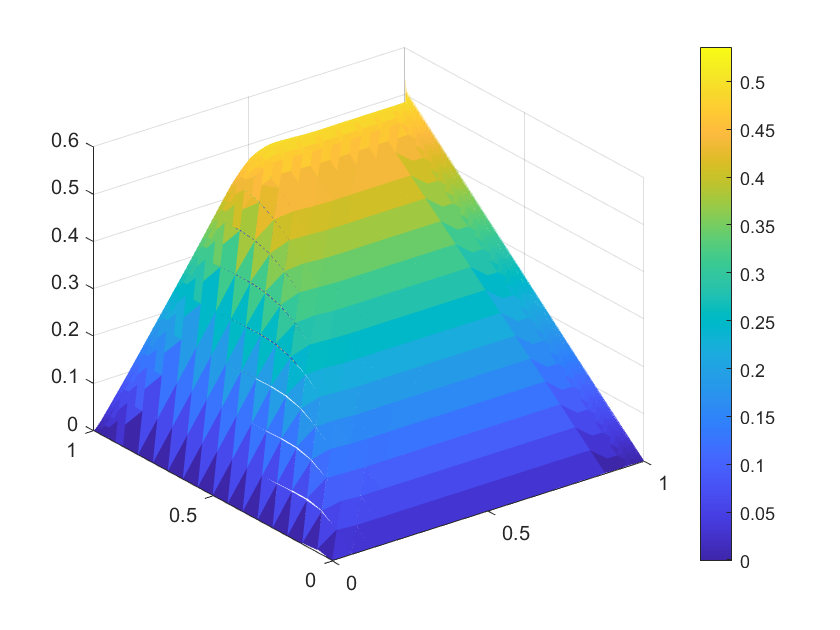}
  }
  \\
  \centering            
  \subfloat[$\mathcal T_{3,\text{bd}}^{\text{manual}}$]   
  {
      \includegraphics[width=0.45\textwidth]{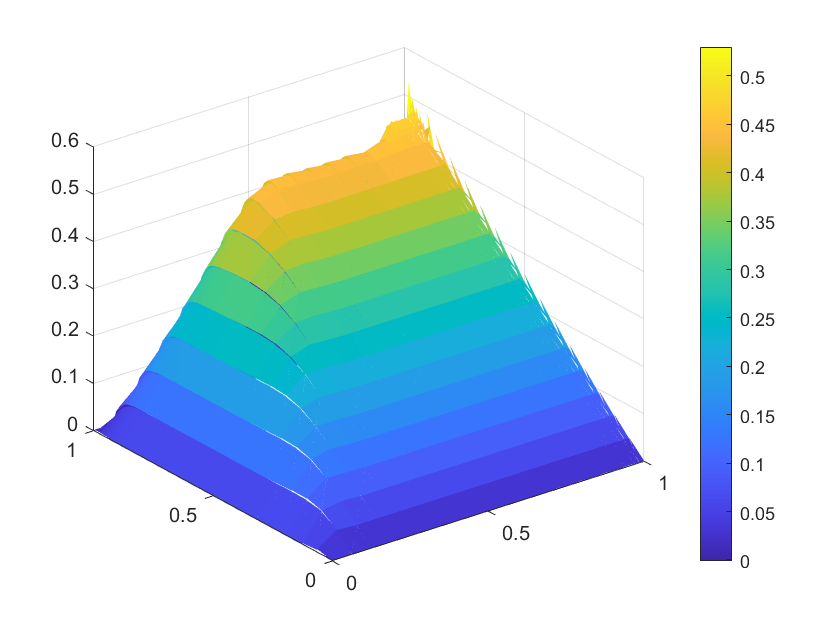}
  }
  \centering      
  \subfloat[$\mathcal T_{4,\text{bd}}^{\text{manual}}$]
  {
      \includegraphics[width=0.45\textwidth]{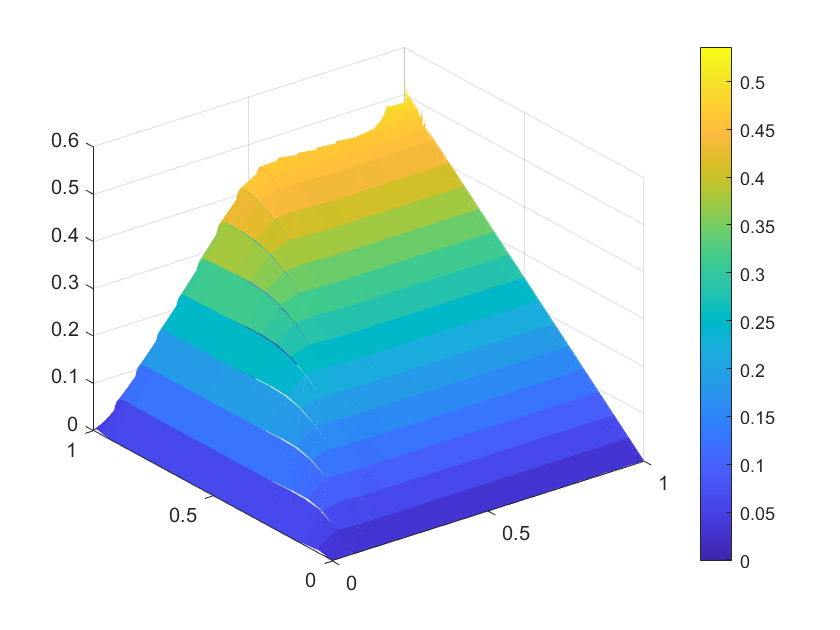}
  }
  \caption{Example 8: Numerical solution $u_h$ obtained by the SUPG method \eqref{magnetic_SUPG_scheme} ($k=1, \delta_T = 0.4\sqrt{2 \times \text{area of } T}$) on manual refined meshes}\label{SUPG solution first component in manual refined meshes}
\end{figure}

\begin{figure}[!htbp]    
  \centering            
  \subfloat[$x_0=0.75$, the first type of mesh]   
  {
      \includegraphics[width=0.45\textwidth]{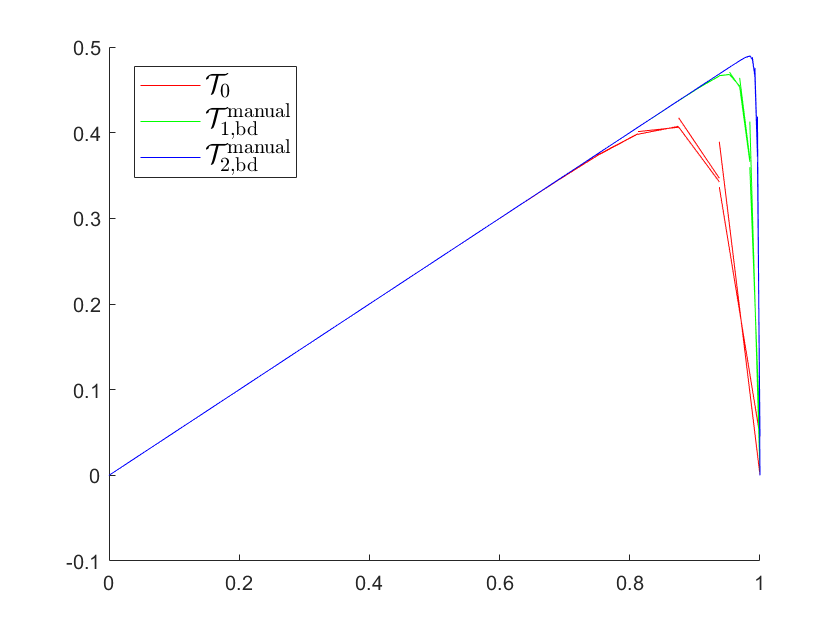}
  }
  \\
  \centering            
  \subfloat[$x_0=0.75$, the second type of mesh]   
  {
      \includegraphics[width=0.45\textwidth]{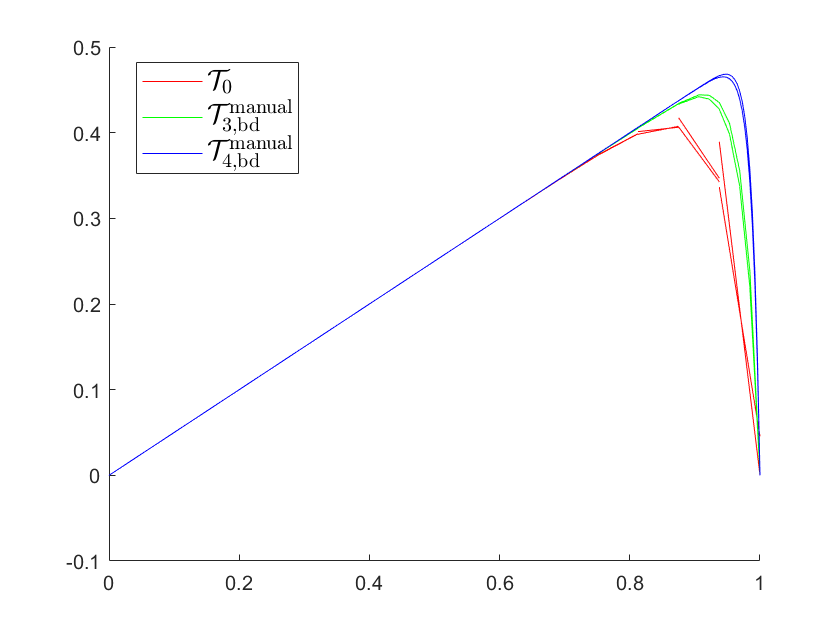}
  }
  \centering      
  \subfloat[$x_0=15/16$, the second type of mesh]
  {
      \includegraphics[width=0.45\textwidth]{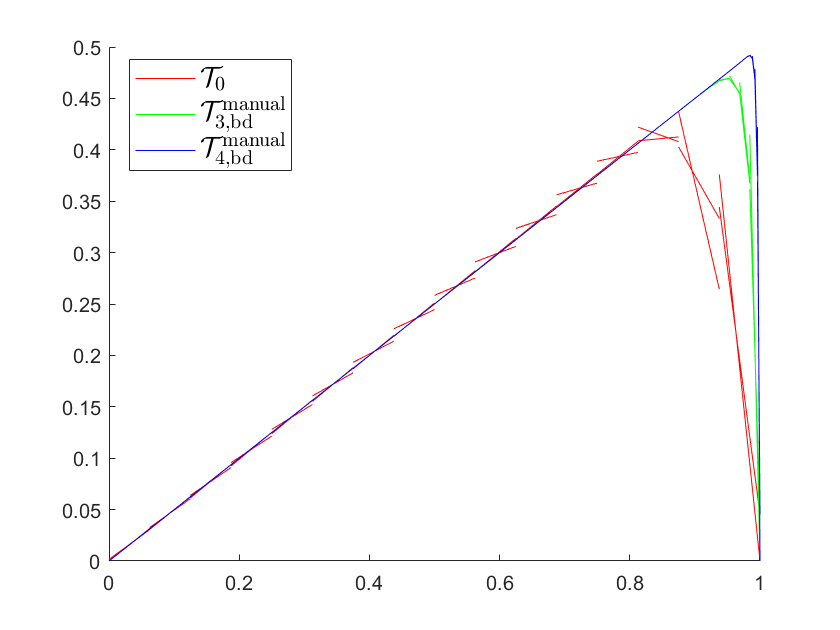}
  }
  \caption{Example 8: The first component of numerical solutions along the cross-section $x = x_0$. $\mathcal T_0$ is the uniform mesh given in Figure \ref{uniform 2D mesh fig} with $N=16$. }\label{numerical solutions bd manual x=x0}
\end{figure}

As observed in Figure \ref{numerical solutions bd manual x=x0}, both types of manually refined meshes yield improved performance compared to the initial uniform mesh. In particular, the second type of mesh achieves superior accuracy near $(1,1)$ compared to other regions of the boundary layer, due to its highest refinement intensity around this point.

Compared to the first type, the second type of mesh offers the advantage of significantly reducing the number of elements, thereby enhancing computational efficiency. However, it presents two drawbacks: First, the loss of shape-regularity may introduce additional analytical difficulties; second, it cannot be obtained by direct refinement of the initial mesh. A more in-depth investigation of these aspects is deferred to future work.

\subsection{Example 9: Manual refined meshes for internal layers}
In this numerical experiment, we replicate the problem settings of Example 7, modifying only the value of $\varepsilon$ to $10^{-5}$, and conduct numerical tests on manual refined meshes. Similar to Example 8, we consider two types of manual refinement: preserving shape-regularity (Figure~\ref{type-1 manual refined meshes internal layers}) or losing shape-regularity (Figure~\ref{type-2 manual refined meshes internal layers}).

\begin{figure}[!htbp]    
  \centering            
  \subfloat[$\mathcal{T}^{\text{manual}}_{1, \text{int}} , h_{\min} = 1/64$]   
  {
      \includegraphics[width=0.5\textwidth]{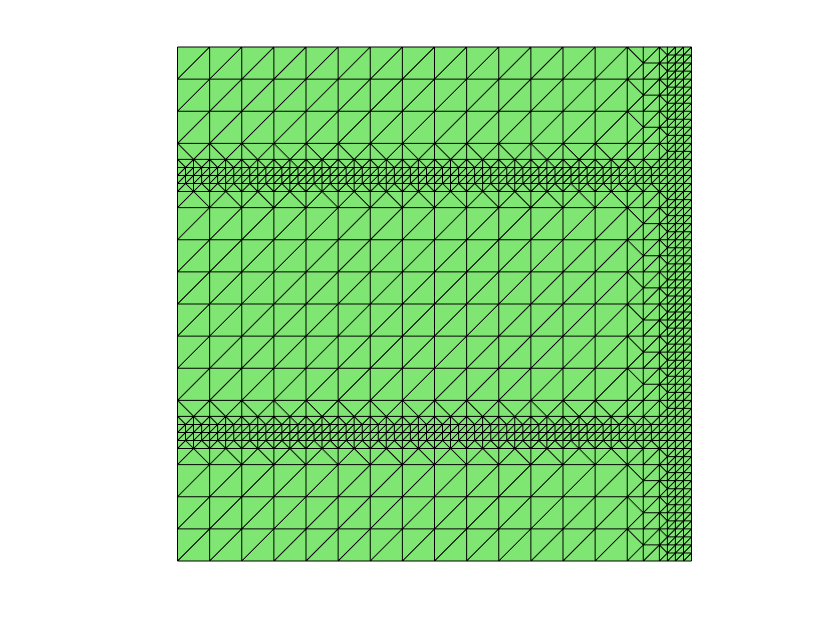}
  }
  \centering      
  \subfloat[$\mathcal{T}^{\text{manual}}_{2, \text{int}} , h_{\min} = 1/256$]
  {
      \includegraphics[width=0.5\textwidth]{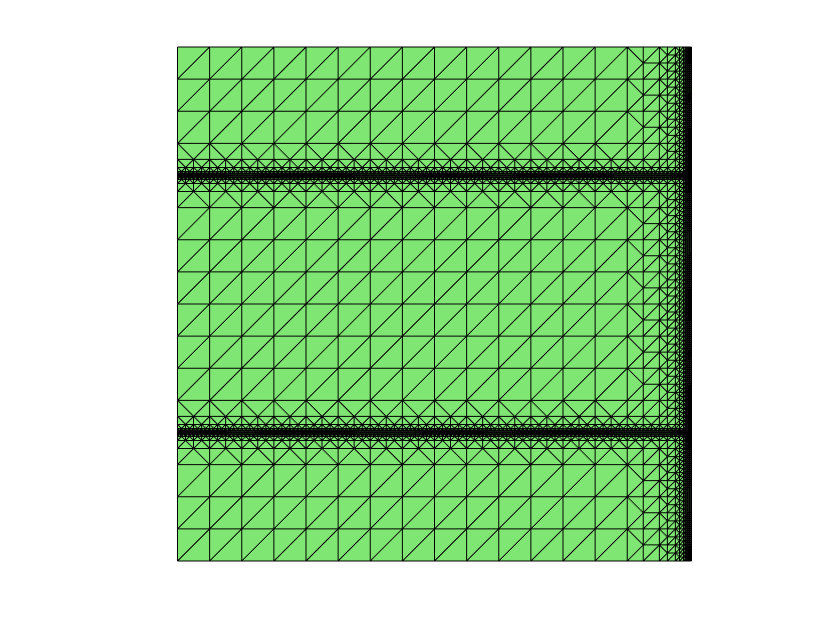}
  }
  \caption{Example 9: The first type of manual refined mesh.}\label{type-1 manual refined meshes internal layers}
\end{figure}

\begin{figure}[!htbp]    
  \centering            
  \subfloat[$\mathcal{T}^{\text{manual}}_{3, \text{int}} , h_{\min} = 1/64$]   
  {
      \includegraphics[width=0.5\textwidth]{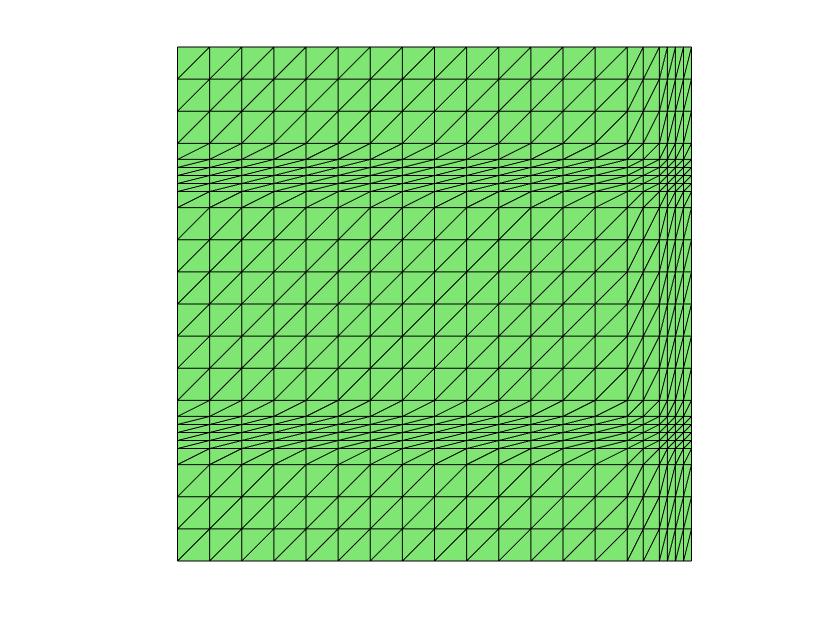}
  }
  \centering      
  \subfloat[$\mathcal{T}^{\text{manual}}_{4, \text{int}} , h_{\min} = 1/256$]
  {
      \includegraphics[width=0.5\textwidth]{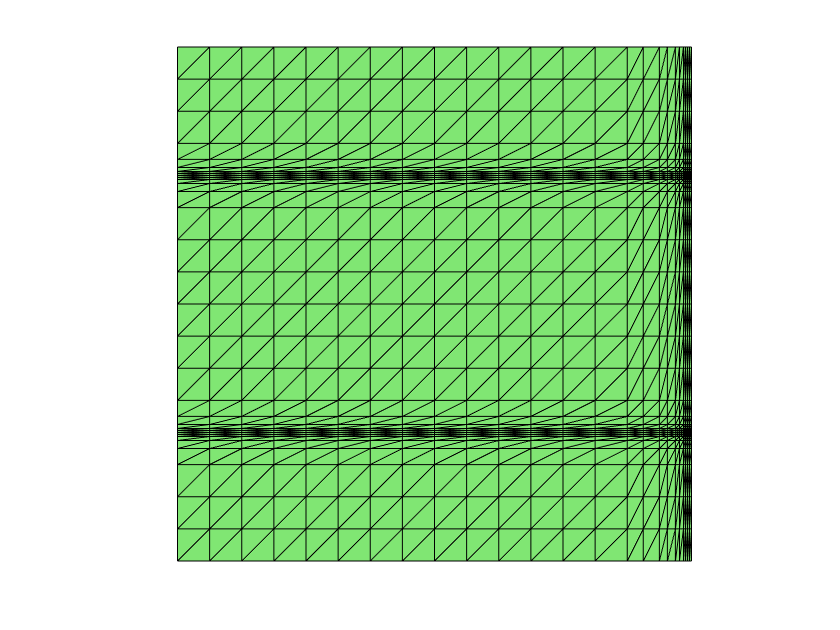}
  }
  \caption{Example 9: The second type of manual refined mesh.}\label{type-2 manual refined meshes internal layers}
\end{figure}

For the four manually refined meshes shown in Figures \ref{type-1 manual refined meshes internal layers} and \ref{type-2 manual refined meshes internal layers}, the first component of the numerical solution obtained by the proposed SUPG method is displayed in Figure \ref{SUPG solution first component in manual refined meshes int}. For a more intuitive visualization, we plot the profile of the first component along the cross-section $x = 0.5$ in Figure \ref{numerical solutions int manual x=0.5}.

\begin{figure}[!htbp]    
  \centering            
  \subfloat[$\mathcal T_{1, \text{int}}^{\text{manual}}$]   
  {
      \includegraphics[width=0.45\textwidth]{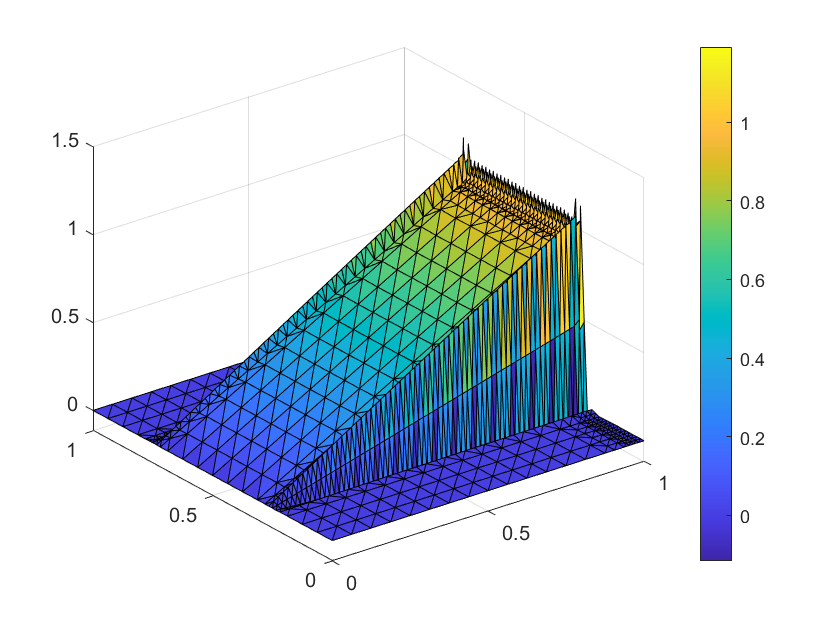}
  }
  \centering      
  \subfloat[$\mathcal T_{2, \text{int}}^{\text{manual}}$]
  {
      \includegraphics[width=0.45\textwidth]{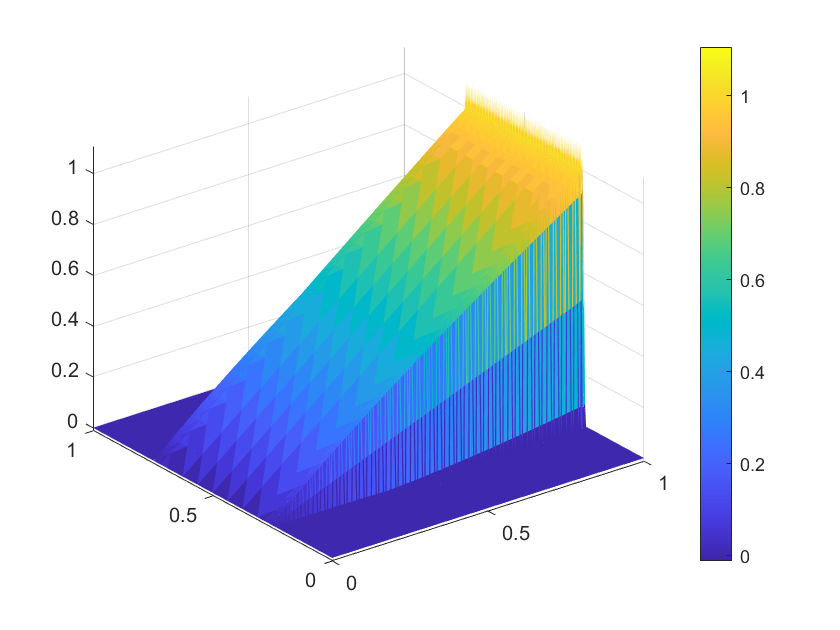}
  }
  \\
  \centering            
  \subfloat[$\mathcal T_{3, \text{int}}^{\text{manual}}$]   
  {
      \includegraphics[width=0.45\textwidth]{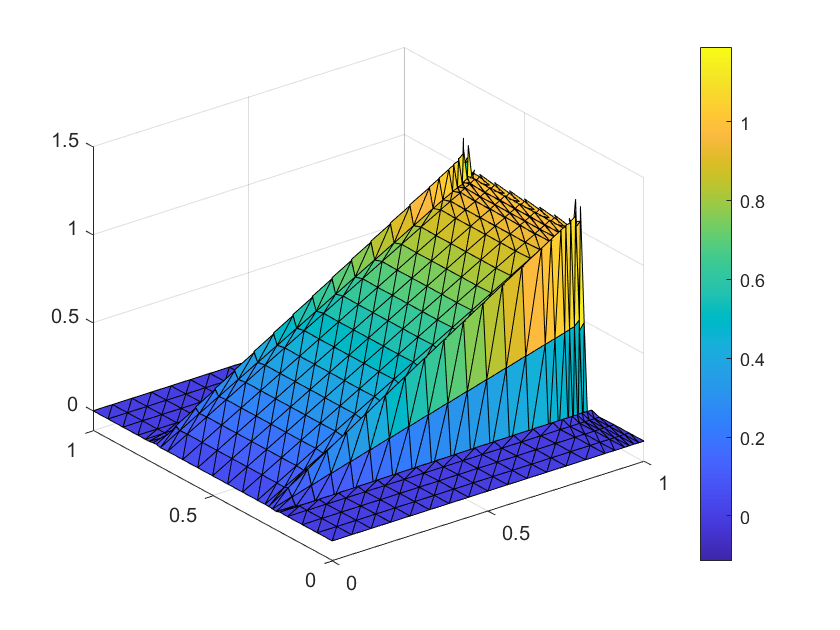}
  }
  \centering      
  \subfloat[$\mathcal T_{4, \text{int}}^{\text{manual}}$]
  {
      \includegraphics[width=0.45\textwidth]{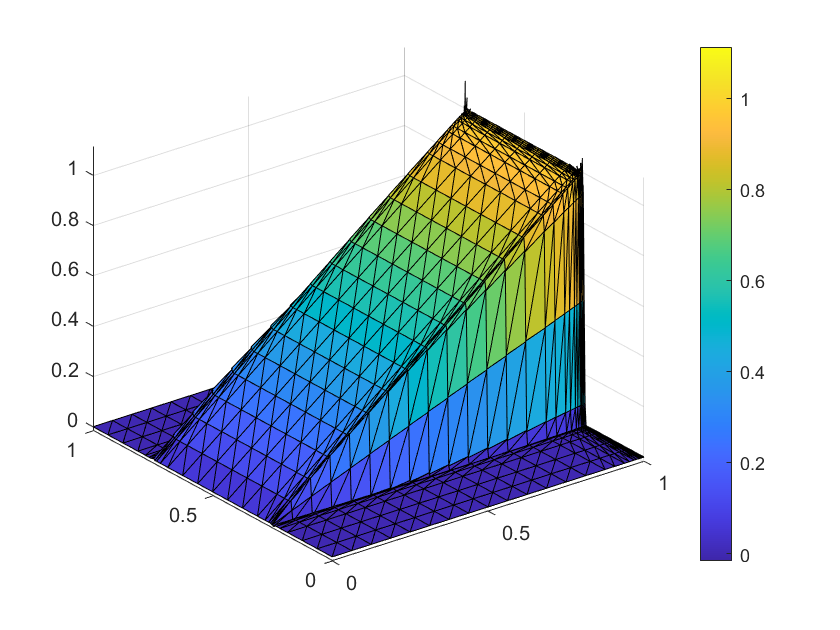}
  }
  \caption{Example 9: Numerical solution $u_h$ obtained by the SUPG method \eqref{magnetic_SUPG_scheme} ($k=1, \delta_T=0.4h_T$), in manual refined meshes}\label{SUPG solution first component in manual refined meshes int}
\end{figure}

\begin{figure}[!htbp]    
  \centering            
  \subfloat[the first type of mesh]   
  {
      \includegraphics[width=0.45\textwidth]{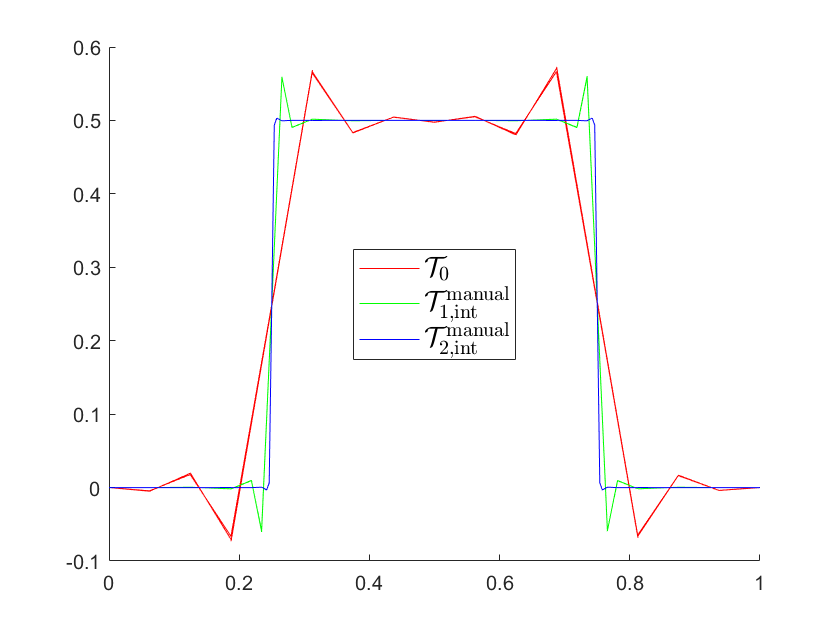}
  }
  \centering            
  \subfloat[the second type of mesh]   
  {
      \includegraphics[width=0.45\textwidth]{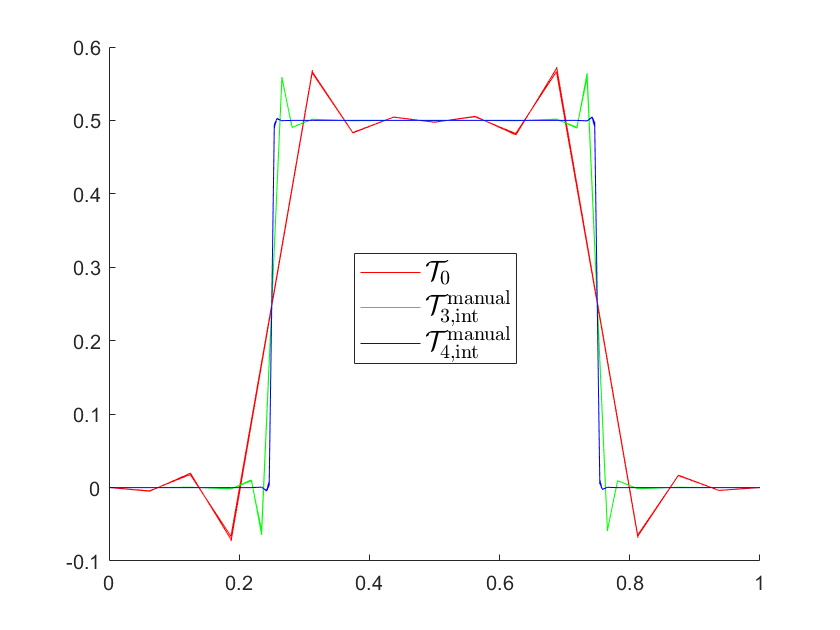}
  }
  \caption{Example 9: The first component of numerical solutions along the cross-section $x = 0.5$. $\mathcal T_0$ is the uniform mesh given in Figure \ref{uniform 2D mesh fig} with $N=16$. }\label{numerical solutions int manual x=0.5}
\end{figure}

As observed in Figure \ref{numerical solutions int manual x=0.5}, local mesh refinement effectively reduces the extent of numerical oscillations in regions where layers occur. However, such refinement generally does not diminish the magnitude of the oscillations unless the mesh is sufficiently fine to resolve the structure of the thin layers.

\subsection{Example 10: Instability of standard Galerkin method}
This numerical experiment further investigates the instability of the
standard Galerkin method by repeating the problem settings of
Example~6. As shown in Figure~\ref{standard_Galerkin_method}, the numerical solution obtained by the standard Galerkin method exhibits significant inaccuracies on coarse uniform meshes. We further examine its performance on manually refined meshes and with higher-order finite element spaces, as illustrated in Figures~\ref{instability of standard Galerkin high order} and~\ref{instability of standard Galerkin refined mesh}.

\begin{figure}[!htbp]    
  \centering            

      \includegraphics[width=0.45\textwidth]{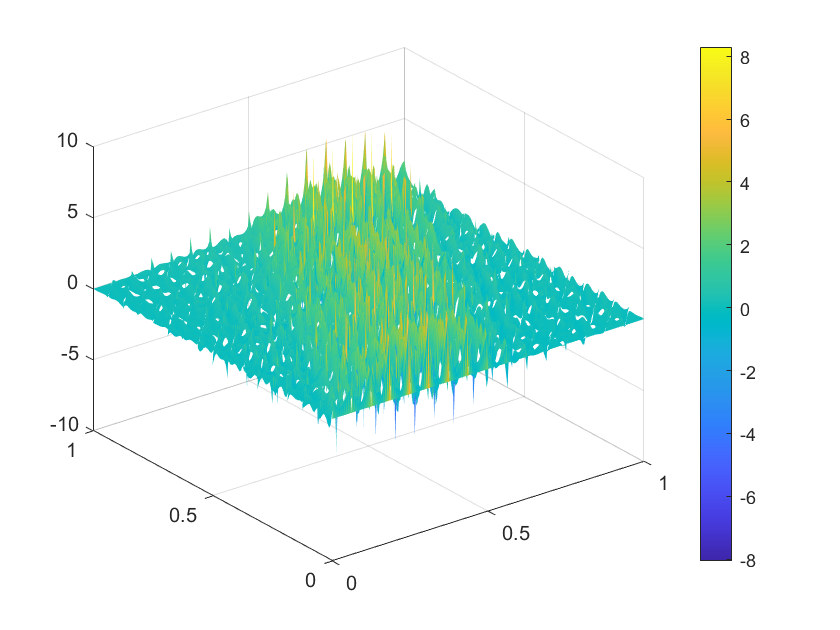} 
  \caption{Example 10: Numerical solution obtained by standard Galerkin method \eqref{standard_Galerkin_method} ($N=16, k = 4$).}\label{instability of standard Galerkin high order}
\end{figure}

\begin{figure}[!htbp]    
  \centering            
  \subfloat[Mesh in Figure~\ref{first type manual mesh level2}]
  {
      \includegraphics[width=0.45\textwidth]{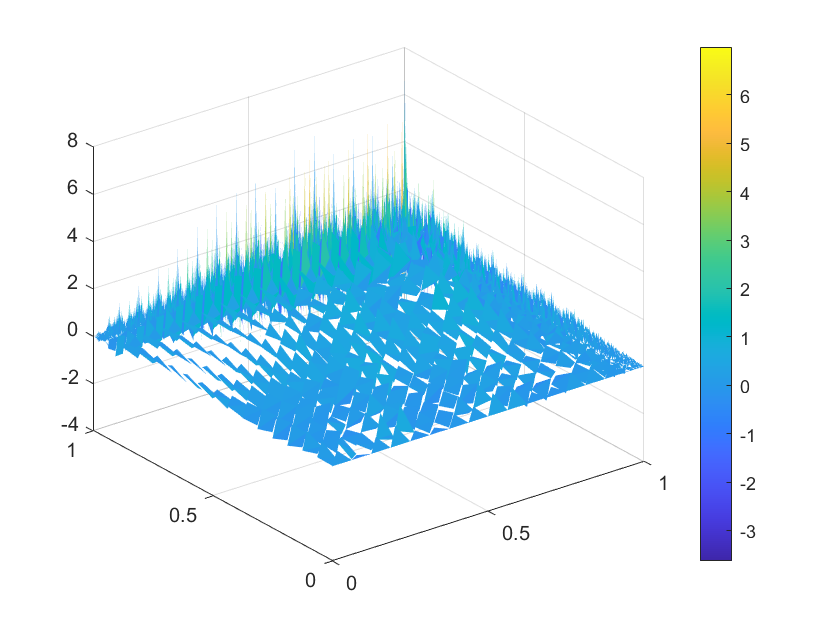} 
  }
  \subfloat[Mesh in Figure~\ref{second type manual mesh level2}]
  {
      \includegraphics[width=0.45\textwidth]{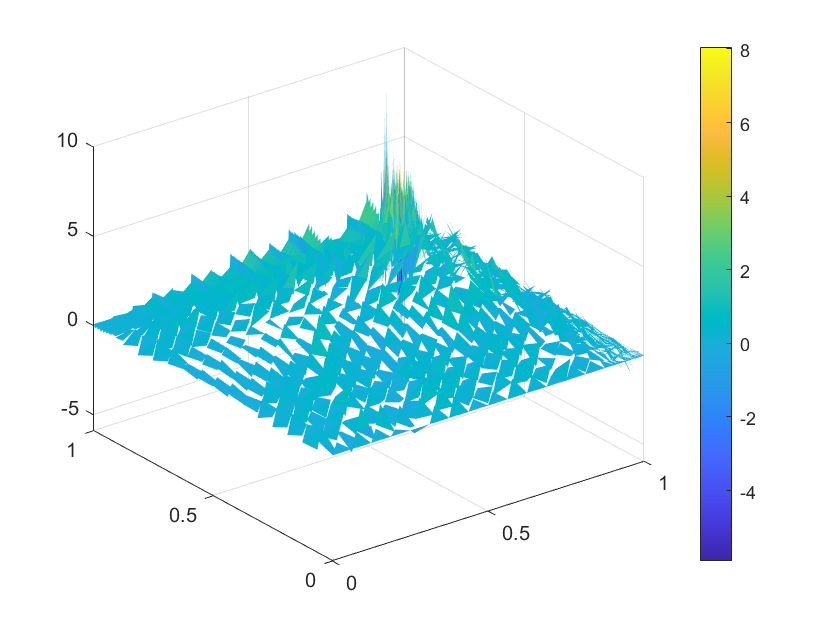} 
  }
  \caption{Example 10: Numerical solutions obtained by standard Galerkin method \eqref{standard_Galerkin_method} on manual refined meshes ($k = 1$).}\label{instability of standard Galerkin refined mesh}
\end{figure}

The results in Figures~\ref{instability of standard Galerkin high order} and~\ref{instability of standard Galerkin refined mesh} demonstrate that the standard Galerkin method continues to perform poorly. This indicates that for magnetic advection-diffusion equations in advection-dominated regimes, particularly when layers occur, neither local mesh refinement nor increasing the polynomial order significantly improves the stability of the numerical scheme, which is consistent with prior observations for scalar advection–diffusion equations \cite{sun2010numerical}. This underscores the necessity of designing stabilized formulations, such as the SUPG scheme proposed in this work.

}

\section{Conclusion}\label{sec:conclusion}
{
In this paper, we have developed and analyzed the streamline upwind/Petrov-Galerkin (SUPG) method for the magnetic advection-diffusion problem. The resulting schemes possess stability and conformity properties that make them well-suited for potential applications to advection-dominated discretizations arising in magnetohydrodynamics (MHD).

The key ingredient of the SUPG method involves introducing stabilization terms that incorporate residuals and weighted advection terms, thereby enhancing stability and accuracy. Unlike scalar cases, a critical distinction in the $\bm{H}(\mathrm{curl})$-conforming finite element discretization for magnetic advection-diffusion problems lies in establishing facet integral terms $\langle\bm{\beta}\cdot\bm{n},\llbracket \bm{u}_h\rrbracket\cdot\vavg{\bm{v}_h}\rangle_F$ to ensure numerical stability. Through the definition of a lifting operator, these terms are incorporated into the discrete magnetic advection operator, enabling the development of an SUPG formulation. Numerical experiments demonstrate that the proposed method attains the optimal convergence rates in the energy norm for smooth solutions, while effectively mitigating spurious oscillations in the vicinity of sharp layers.
These results demonstrate the efficiency of the proposed method.}


\section*{Data availability}
No data was used for the research described in the article.

\section*{Declaration of competing interest}
The authors declare that they have no known competing financial interests or personal relationships that could have appeared to
influence the work reported in this paper.

\section*{Acknowledgments}
This work was supported in part by the National Natural Science Foundation of China (Grants No.~12222101 and No.~12571383) and the Beijing Natural Science Foundation (Grant No.~1232007).

{
\appendix
\section{Supplementary Details for Numerical Experiments}\label{secA1}
To ensure the reproducibility of our numerical tests, the source term $\bm{f}$ and boundary data $\bm{g}$ used in Subsections \ref{Numerical_experiments_example1}, \ref{Numerical_experiments_example2}, \ref{Numerical_experiments_example4}, and \ref{Numerical_experiments_example5} are documented in this appendix. Images of the exact solutions are also provided.

\subsection{ Example 1 }
In this example, the inflow and outflow boundaries are identified as:
$$
\begin{aligned}
    \Gamma^- & = \{(x,y,z)\in\partial\Omega: x = 0~\text{or}~y=0~\text{or}~z=0\},  \\
    \Gamma^+ & = \{(x,y,z)\in\partial\Omega: x = 1~\text{or}~y=1~\text{or}~z=1\}.
\end{aligned}
$$
The source term $\bm{f}$ and boundary data $\bm{g}$ are specified as follows:
$$
\begin{aligned}
\bm f =&~ \varepsilon\begin{bmatrix}
y \cos(x y z) - 2x - x^{2} y e^{x z} - x y^{2} z \sin(x y z) \\
    2y + x \cos(x y z) + z e^{x z} - x^{2} y z \sin(x y z) \\
    y e^{x z} + x^{2} z^{2} \sin(x y z) + y^{2} z^{2} \sin(x y z) + x y z e^{x z}
\end{bmatrix} \\
&+\begin{bmatrix}
    8y e^{xz} - x^{2}y + (x + 2)e^{xz} - yz\left(\frac{z}{2} - 1\right) e^{xz} - xy(y - 3) e^{xz} \\
    2xy\left(\frac{z}{2} - 1\right) - x^{2}(x + 2) - 8x^{2}y - \sin(xyz) \\
    8\sin(xyz) - \frac{y e^{xz}}{2} + x(x + 2)z \cos(xyz) - xy (y - 3)\cos(xyz) - yz\left(\frac{z}{2} - 1\right) \cos(xyz)
\end{bmatrix}.
\end{aligned}
$$
$$
\begin{aligned}
\bm g =&\left\{
\begin{aligned}
    & [y, 0, 0]^T &\{(x,y,z)\in\partial\Omega : x = 0\}\\
    & [0, -\sin(yz), -y]^T &\{(x,y,z)\in\partial\Omega : x = 1\}\\
    & [0, 0, 0]^T &\{(x,y,z)\in\partial\Omega : y = 0\}\\
    & [\sin(xz), 0, -e^{xz}]^T &\{(x,y,z)\in\partial\Omega : y = 1\}\\
    & [y, -x^2y, 0]^T &\{(x,y,z)\in\partial\Omega : z = 0\}\\
    & [x^2y, ye^x, 0]^T &\{(x,y,z)\in\partial\Omega : z = 1\}\\
\end{aligned}
\right..
\end{aligned}
$$
A plot of the exact solution is provided in Figure \ref{Example1_exactsol}.
\begin{figure}[!htbp]
\centering
\includegraphics[width=0.5\textwidth]{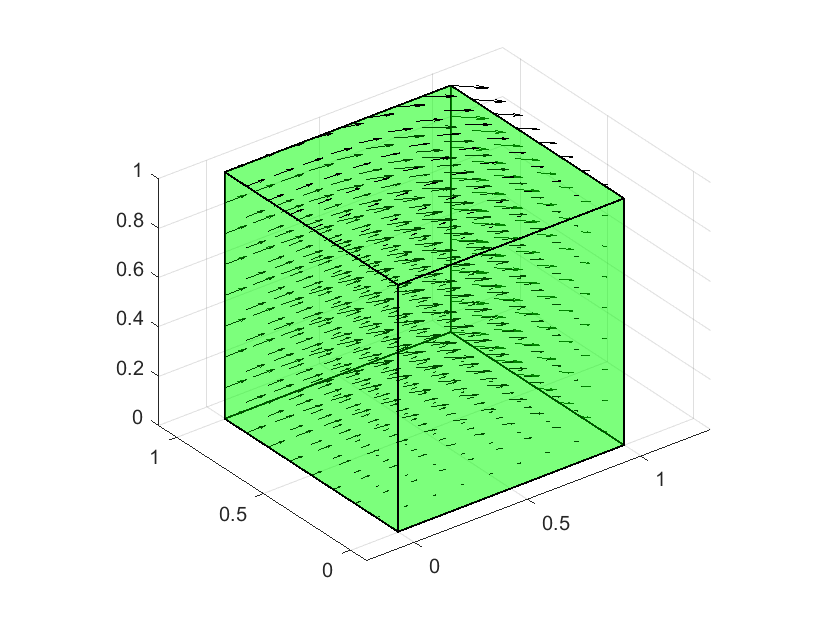} 
\caption{A plot of $\bm u = [ye^{xz}, -x^2y ,\sin(xyz) ]^T$} \label{Example1_exactsol}
\end{figure}

\subsection{ Example 2 }
In this example, the inflow and outflow boundaries are identified as:
$$
\begin{aligned}
    \Gamma^-=&\{(x,y)\in\partial\Omega: x=0, \frac{1}{2}<y<1\}\cup\{(x,y)\in\partial\Omega: x=1, 0<y<\frac{1}{2}\}\\
    &\cup\{(x,y)\in\partial\Omega: y=0, 0<x<\frac{1}{2}\}\cup\{(x,y)\in\partial\Omega: y=1, \frac{1}{2}<x<1\},  \\
    \Gamma^+=&\{(x,y)\in\partial\Omega: x=0, 0\le y\le\frac{1}{2} \}\cup\{(x,y)\in\partial\Omega: x=1, \frac{1}{2}\le y\le1 \}\\
    &\cup\{(x,y)\in\partial\Omega: y=0, \frac{1}{2}\le x\le1 \}\cup\{(x,y)\in\partial\Omega: y=1, 0\le x\le\frac{1}{2} \}.
\end{aligned}
$$
The source term $\bm{f}$ and boundary data $\bm{g}$ are specified as follows:
$$
\begin{aligned}
\bm f =&~ \varepsilon\begin{bmatrix}
\pi^{2} e^{x} \cos(\pi x) \cos(\pi y) - 32x(x - 1) + \pi e^{x} \cos(\pi y) \sin(\pi x) \\
    64xy - 32y - 32x + (\pi^{2}-1) e^{x} \sin(\pi x) \sin(\pi y) - 2\pi e^{x} \cos(\pi x) \sin(\pi y) + 16
\end{bmatrix} \\
&+\begin{bmatrix}
    - 16x\left(x - \frac{1}{2}\right)(x - 1)(2y - 1) + 16(2x - 1)y\left(y - \frac{1}{2}\right)(y - 1) \\
    e^{x}\left(y - \frac{1}{2}\right) \sin(\pi y)\left(\sin(\pi x) + \pi \cos(\pi x)\right) - \pi e^{x}\left(x - \frac{1}{2}\right) \cos(\pi y) \sin(\pi x)
\end{bmatrix}\\
&+\begin{bmatrix}
    16x(x - 1)y(y - 1) - e^{x} \sin(\pi x) \sin(\pi y)  \\
    e^{x} \sin(\pi x) \sin(\pi y) + 16x(x - 1)y(y - 1) 
\end{bmatrix}\\
\bm g \equiv&~0
.
\end{aligned}
$$
A plot of the exact solution is provided in Figure \ref{Example2_exactsol}.
\begin{figure}[!htbp]
\centering
\includegraphics[width=0.5\textwidth]{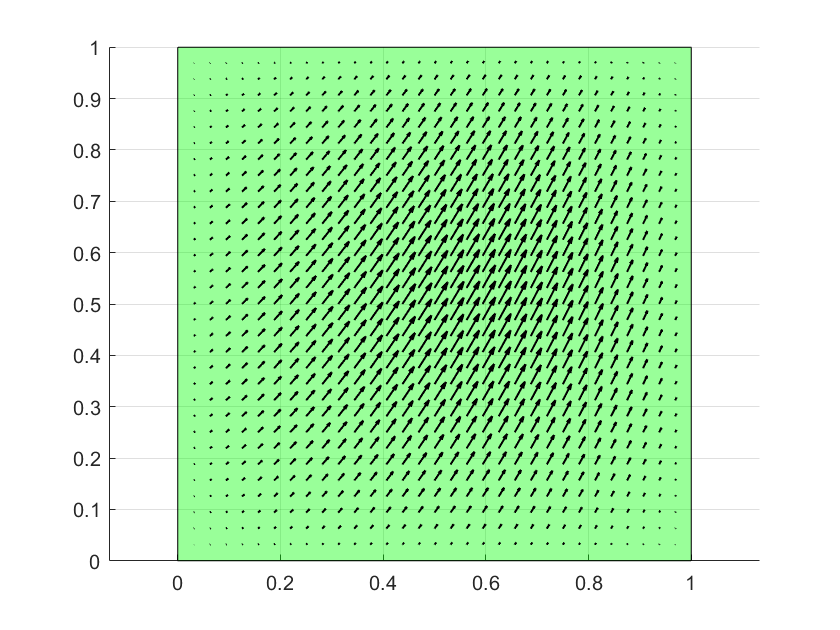} 
\caption{A plot of $\bm u = [16x(1-x)y(1-y),e^x\sin(\pi x)\sin(\pi y)]^T$}\label{Example2_exactsol}
\end{figure}

\subsection{ Example 4 }
In this example, the inflow and outflow boundaries are identified as:
$$
\begin{aligned}
    \Gamma^-=&\{(x,y)\in\partial\Omega: x=0, \frac{1}{2}<y<1\}\cup\{(x,y)\in\partial\Omega: x=1, 0<y<\frac{1}{2}\}\\
    &\cup\{(x,y)\in\partial\Omega: y=0, 0<x<\frac{1}{2}\}\cup\{(x,y)\in\partial\Omega: y=1, \frac{1}{2}<x<1\},  \\
    \Gamma^+=&\{(x,y)\in\partial\Omega: x=0, 0\le y\le\frac{1}{2} \}\cup\{(x,y)\in\partial\Omega: x=1, \frac{1}{2}\le y\le1 \}\\
    &\cup\{(x,y)\in\partial\Omega: y=0, \frac{1}{2}\le x\le1 \}\cup\{(x,y)\in\partial\Omega: y=1, 0\le x\le\frac{1}{2} \}.
\end{aligned}
$$
The source term $\bm{f}$ and boundary data $\bm{g}$ are specified as follows:
$$
\begin{aligned}
\bm f = &~\varepsilon
\begin{bmatrix}
    \pi^2 e^{x} \cos(\pi x) \cos(\pi y) + \pi e^{x} \cos(\pi y) \sin(\pi x) \\
    (\pi^2 - 1) e^{x} \sin(\pi x) \sin(\pi y) - 2\pi e^{x} \cos(\pi x) \sin(\pi y)
\end{bmatrix}
\\
&+ \varepsilon
\begin{bmatrix}
    32x(x-1)(y+1)(y-2) \sin(x+y) - 64x(x-1)(2y-1) \cos(x+y) \\
    -32(x^2 y^2 - x^2 y - x y^2 - 3x y + 2x + 2y - 1) \sin(x+y)
\end{bmatrix}
\\
&+ \varepsilon
\begin{bmatrix}
    0 \\
    32(2xy - x - y)(x + y - 1) \cos(x+y)
\end{bmatrix}
\\
&+
\begin{bmatrix}
    -16(4x^3 y - 2x^3 - 2x^2 y^2 - 4x^2 y + 3x^2) \sin(x+y) \\
    e^{x} \sin(\pi y) \left( (y+\frac{1}{2}) \sin(\pi x) + \pi (y-\frac{1}{2}) \cos(\pi x) \right)
\end{bmatrix}
\\
&+
\begin{bmatrix}
    16(4x y^3 - 8x y^2 + 2x y + x - 2y^3 + 3y^2 - y) \sin(x+y) \\
    -\pi e^{x} (x-\frac{1}{2}) \cos(\pi y) \sin(\pi x) + 32x(x-1)y(y-1) \sin(x+y)
\end{bmatrix}
\\
&+
\begin{bmatrix}
    32xy(y-x)(x-1)(y-1) \cos(x+y) - e^{x} \sin(\pi x) \sin(\pi y) \\
    0
\end{bmatrix}\\
\bm g \equiv&~0.
\end{aligned}
$$
A plot of the exact solution is provided in Figure \ref{Example4_exactsol}.
\begin{figure}[!htbp]
\centering
\includegraphics[width=0.5\textwidth]{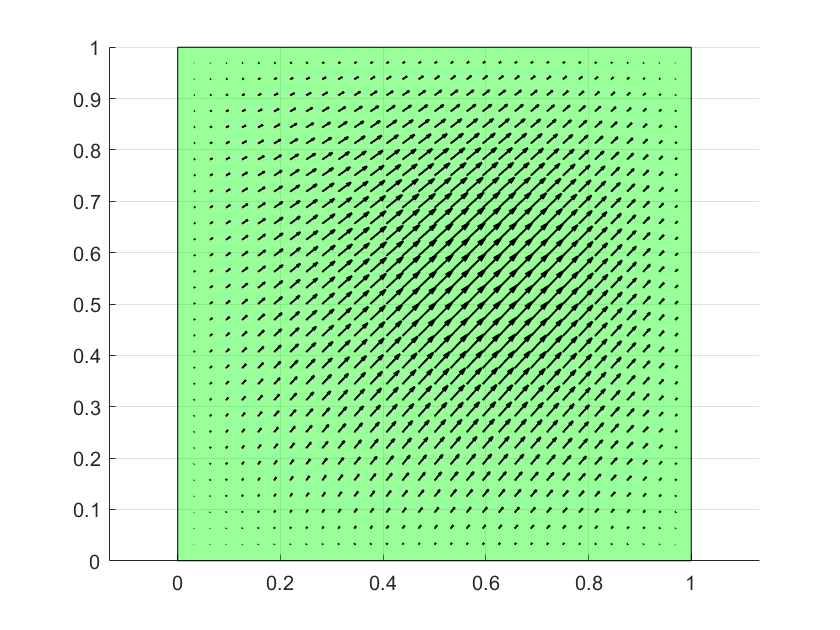} 
\caption{A plot of $\bm u = [32x(1-x)y(1-y)\sin(x+y),e^x\sin(\pi x)\sin(\pi y)]^T$} \label{Example4_exactsol}
\end{figure}

\subsection{ Example 5 }
In this example, the source term $\bm{f}$ is specified as follows:
$$
\begin{aligned}
\bm{f}  =&~\varepsilon
\begin{bmatrix}
    \pi^2 e^{x} \sin(\pi(x + y)) - 2\pi(x - 1) \sin(\pi(x^2 + y)) \\
    \pi(y + 1) \sin(\pi(x^2 + y)) + e^{x} \sin(\pi(x + y )) - \cos(\pi(x^2 + y))
\end{bmatrix}\\
&+
\varepsilon
\begin{bmatrix}
     -\pi e^{x} \cos(\pi(x + y))  \\
     -\pi^2 e^{x} \sin(\pi(x + y)) + 2\pi e^{x} \cos(\pi(x + y)) 
\end{bmatrix}\\
&+
\varepsilon
\begin{bmatrix}
    - \pi^2(x - 1)(y + 1) \cos(\pi(x^2 + y)) \\
     2\pi x(x - 1)\sin(\pi(x^2 + y)) + 2\pi^2 x(x - 1)(y + 1) \cos(\pi(x^2 + y))
\end{bmatrix}\\
&+
\begin{bmatrix}
    (x^2/2 + 1/2)((x - 1)\cos(\pi(x^2 + y)) - \pi(x - 1)(y + 1) \sin(\pi(x^2 + y))) \\
    -4e^{x} \sin(\pi(x + y)) - y \cos(\pi(x^2 + y))(x - 1)(y + 1)
\end{bmatrix}\\
&+
\begin{bmatrix}
    - (y^2/2 + 2)((y + 1)\cos(\pi(x^2 + y)) - 2\pi x(x - 1)(y + 1)\sin(\pi(x^2 + y))) \\
    -e^{x}(y^2/2 + 2)(\sin(\pi(x + y)) + \pi \cos(\pi(x + y)))
\end{bmatrix}\\
&+
\begin{bmatrix}
     x e^{x} \sin(\pi(x + y)) - 4(x - 1)(y + 1)\cos(\pi(x^2 + y)) \\
     \pi e^{x}(x^2/2 + 1/2)\cos(\pi(x + y)) 
\end{bmatrix}&
\end{aligned}
$$
For the hexagonal computational domain, the vertices are defined by $P_1 = (1,0)$, $P_2 = \left(\frac{1}{2},\frac{\sqrt{3}}{2}\right)$, $P_3 = \left(-\frac{1}{2},\frac{\sqrt{3}}{2}\right)$, $P_4 = (-1,0)$, $P_5 = \left(-\frac{1}{2},-\frac{\sqrt{3}}{2}\right)$, and $P_6 = \left(\frac{1}{2},-\frac{\sqrt{3}}{2}\right)$. The inflow and outflow boundaries are identified as:
$$
\begin{aligned}
    \Gamma^- &= P_2P_3\cup P_3P_4\cup P_4P_5,  \\
    \Gamma^+ &= P_1P_2\cup P_5P_6\cup P_6P_1.
\end{aligned}
$$
The boundary data $\bm g$ is given by:
$$
\begin{aligned}
\bm g =&\left\{
\begin{aligned}
    & \begin{bmatrix} \frac{1}{4}\left(\sqrt{3}e^{x}\sin(\pi(x + y)) - (x - 1)(y + 1)\cos(\pi(x^2 + y))\right) \\ \frac{\sqrt{3}}{4}\left(-\sqrt{3}e^{x}\sin(\pi(x + y)) + (x - 1)(y + 1)\cos(\pi(x^2 + y))\right) \end{bmatrix} &(x,y)\in P_1P_2\\
    & \left[-(x - 1)(y + 1)\cos(\pi(x^2 + y)), -e^{x}\sin(\pi(x + y))\right]^T &(x,y)\in P_2P_3\\
    & \left[-(x - 1)(y + 1)\cos(\pi(x^2 + y)), -e^{x}\sin(\pi(x + y))\right]^T &(x,y)\in P_3P_4\\
    & \left[-(x - 1)(y + 1)\cos(\pi(x^2 + y)), -e^{x}\sin(\pi(x + y))\right]^T &(x,y)\in P_4P_5\\
    & \left[-(x - 1)(y + 1)\cos(\pi(x^2 + y)), 0\right]^T &(x,y)\in P_5P_6\\
    & \begin{bmatrix} -\frac{1}{4}\left(\sqrt{3}e^{x}\sin(\pi(x + y)) + (x - 1)(y + 1)\cos(\pi(x^2 + y))\right) \\ -\frac{\sqrt{3}}{4}\left(\sqrt{3}e^{x}\sin(\pi(x + y)) + (x - 1)(y + 1)\cos(\pi(x^2 + y))\right) \end{bmatrix} &(x,y)\in P_6P_1\\
\end{aligned}
\right..
\end{aligned}
$$
For the hexagonal computational domain, the vertices are defined by $P_1 = (0,0), P_2 = (1,0), P_3 = (1,1), P_4 = (-1,1), P_5 = (-1,-1),$ and $ P_6 = (0,-1)$. The inflow and outflow boundaries are identified as:
$$
\begin{aligned}
    \Gamma^- &= P_3P_4\cup P_4P_5,  \\
    \Gamma^+ &= P_1P_2\cup P_2P_3\cup P_5P_6\cup P_6P_1.
\end{aligned}
$$
The boundary data $\bm g$ is given by:
$$
\begin{aligned}
\bm g =&\left\{
\begin{aligned}
    & \left[-(x - 1)\cos(\pi x^2), 0\right]^T &(x,y)\in P_1P_2\\
    & \left[0, e\sin(\pi y)\right]^T &(x,y)\in P_2P_3\\
    & \left[2(x - 1)\cos(\pi x^2), e^{x}\sin(\pi x)\right]^T &(x,y)\in P_3P_4\\
    & \left[-2(y + 1)\cos(\pi y), e^{-1}\sin(\pi y)\right]^T &(x,y)\in P_4P_5\\
    & \left[0, 0\right]^T &(x,y)\in P_5P_6\\
    & \left[0, -\sin(\pi y )\right]^T &(x,y)\in P_6P_1\\
\end{aligned}
\right..
\end{aligned}
$$
Plots of the exact solutions on the hexagonal and L-shaped meshes are provided in Figure \ref{Example5_exactsol}.
\begin{figure}[!htbp]    
  \centering            
  \subfloat[hexagonal domain]
  {
      \includegraphics[width=0.45\textwidth]{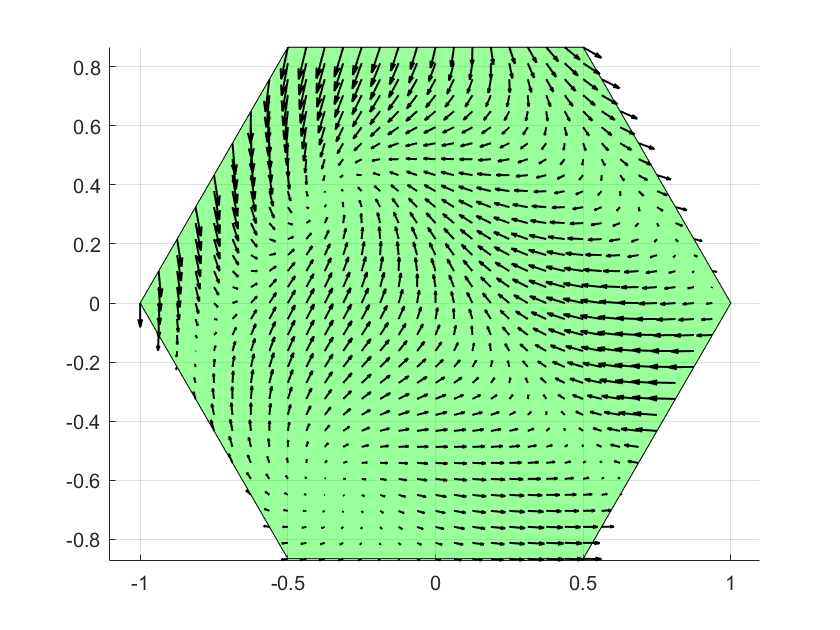} 
  }
  \subfloat[L-shaped domain]
  {
      \includegraphics[width=0.45\textwidth]{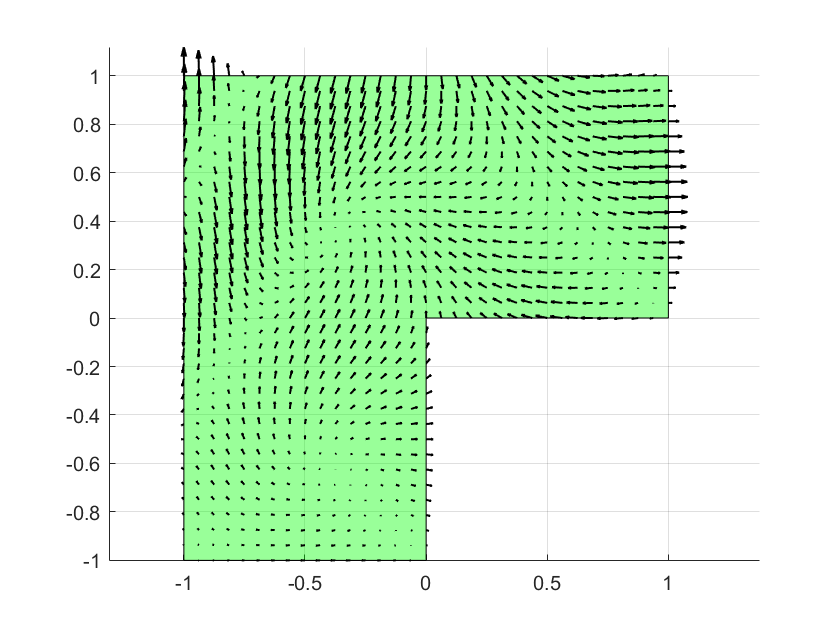} 
  }
  \caption{Plots of $\bm u = [(1-x)(y+1)\cos\pi(x^2+y),e^x\sin\pi(x+y+1)]^T$ on different meshes}\label{Example5_exactsol}
\end{figure}


}

\bibliographystyle{elsarticle-num-names} 
\bibliography{supg}

\end{document}